\newtheorem{theorem}{Theorem}[section] 
\newtheorem{lemma}[theorem]{Lemma} 
\newtheorem{proposition}[theorem]{Proposition} 
\newtheorem{remark}[theorem]{Remark}
\newtheorem{assumption}{Assumption} 
\newcommand{\beq}{\begin{equation}} 
\newcommand{\eeq}{\end{equation}} 
\newcommand{\beqa}{\begin{eqnarray}} 
\newcommand{\eeqa}{\end{eqnarray}} 
\newcommand{\beqas}{\begin{eqnarray*}} 
\newcommand{\eeqas}{\end{eqnarray*}} 
\newcommand{\ba}{\begin{array}} 
\newcommand{\ea}{\end{array}} 
\newcommand{\bi}{\begin{itemize}} 
\newcommand{\ei}{\end{itemize}} 
\newcommand{\gap}{\hspace*{2em}} 
\newcommand{\nn}{\nonumber}
\def\eqnok#1{(\ref{#1})}
\def\vgap{\vspace*{.1in}}
\def\QED{\ifhmode\unskip\nobreak\fi\ifmmode\ifinner\else\hskip5pt\fi\fi
 \hbox{\hskip5pt\vrule width5pt height5pt depth1.5pt\hskip1pt}}
\def\AV{{\rm AdjVar }}
\def\balpha{{\bar\alpha}} 
\def\bG{{\bar G}}
\def\bx{{\bar x}}
\def\bt{{\ \bullet \ }}
\def\bV{{\bar V}}
\def\cA{{\cal A}}
\def\cD{{\cal D}}
\def\cL{{\cal L}}  
\def\cS{{\cal S}} 
\def\cU{{\cal U}} 
\def\dist{{\rm dist}}
\def\Diag{{\rm Diag}}
\def\dbP{{\delta\bar P}}
\def\dD{{\delta D}}
\def\dP{{\delta P}}
\def\dV{{\delta V}}
\def\dY{{\delta Y}}
\def\dZ{{\delta Z}}
\def\eps{{\epsilon}}
\def\hsigma{{\hat \Sigma}}
\def\init{{\rm init}}
\def\lambdam{{\lambda_{\min}}}
\def\lir{{{\underline i}_r}}
\def\uir{{{\bar i}_r}}
\def\lmax{{\lambda_{\max}}}
\def\lmin{{\lambda_{\min}}}
\def\q{{P}}
\def\rank{{\rm rank}}
\def\sign{{\rm sign}}
\def\td{{\tilde d}}
\def\tH{{\tilde H}}
\def\tr{{\rm Tr}}
\def\tx{{\tilde x}}
\def\vrho{{\varrho}}
\def\w{{w}}
\title{An Augmented Lagrangian Approach for Sparse Principal Component Analysis}
\author{
	Zhaosong Lu%
	\thanks{
	Department of Mathematics, Simon Fraser University, Burnaby, BC, 
	V5A 1S6, Canada. (email: {\tt zhaosong@sfu.ca}). This author was 
    supported in part by NSERC Discovery Grant.} 
	\and
	Yong Zhang 
	\thanks{Department of Mathematics, 
	Simon Fraser University, Burnaby, BC, V5A 1S6, 
    Canada. (email: {\tt yza30@sfu.ca}).}
}
\date{July 10, 2009}
\begin{document}

\maketitle

\begin{abstract}
Principal component analysis (PCA) is a widely used technique for data 
analysis and dimension reduction with numerous applications in science 
and engineering. However, the standard PCA suffers from the fact 
that the principal components (PCs) are usually linear combinations 
of all the original variables, and it is thus often difficult to 
interpret the PCs. To alleviate this drawback, various sparse 
PCA approaches were proposed in literature \cite{Jol95,CaJo95,
JoTrUd03,ZoHaTi06,DaElJoLa07,ShHu07,MoWeAv06,DaBaEl08,JoNeRiSe08}. 
Despite success in achieving sparsity, some important properties 
enjoyed by the standard PCA are lost in these methods such as 
uncorrelation of PCs and orthogonality of loading vectors. Also, 
the total explained variance that they attempt to maximize 
can be too optimistic. In this paper we propose a new formulation 
for sparse PCA, aiming at finding sparse and nearly uncorrelated PCs 
with orthogonal loading vectors while explaining as much of the total 
variance as possible. We also develop a novel augmented 
Lagrangian method for solving a class of nonsmooth constrained 
optimization problems, which is well suited for our formulation of sparse 
PCA. We show that it converges to a {\it feasible} point, and moreover under 
some regularity assumptions, it converges to a stationary point. 
Additionally, we propose two nonmonotone gradient methods for solving 
the augmented Lagrangian subproblems, and establish their global and 
local convergence. Finally, we compare our sparse PCA approach with 
several existing methods on synthetic, random, and real data, respectively. 
The computational results demonstrate that the sparse PCs produced by our approach 
substantially outperform those by other methods in terms of total 
explained variance, correlation of PCs, and orthogonality of loading vectors.
 
\vskip14pt

\noindent {\bf Key words:} sparse PCA, augmented Lagrangian method, 
nonmonotone gradient methods, nonsmooth minimization
 
\vskip14pt

\noindent
{\bf AMS 2000 subject classification:}
62H20, 62H25, 62H30, 90C30, 65K05

\end{abstract}

\section{Introduction} \label{introduction}

Principal component analysis (PCA) is a popular tool for data 
processing and dimension reduction. It has been widely used in 
numerous applications in science and engineering such as biology, 
chemistry, image processing, machine learning and so on. For 
example, PCA has recently been applied to human face recognition, 
handwritten zip code classification and gene expression data 
analysis (see \cite{HaBuBr96,HaTiFr01,AlBrBr00,HaTiEi00}).      

In essence, PCA aims at finding a few linear combinations of the 
original variables, called {\it principal components} (PCs), which 
point in orthogonal directions capturing as much of the variance 
of the variables as possible. It is well known that PCs can be found 
via the eigenvalue decomposition of the covariance matrix $\Sigma$. 
However, $\Sigma$ is typically unknown in practice. Instead, the PCs 
can be approximately computed via the singular value decomposition (SVD) 
of the data matrix or the eigenvalue decomposition of the sample 
covariance matrix. In detail, let $\xi = (\xi^{(1)}, \ldots, \xi^{(p)})$ 
be a $p$-dimensional random vector, and $X$ be an $n \times p$ data 
matrix, which records the $n$ observations of $\xi$. Without loss of 
generality, assume $X$ is centered, that is, the column means of $X$ 
are all $0$. Then the commonly used sample covariance matrix is 
$\hsigma = X^T X/(n-1)$. Suppose the eigenvalue decomposition of 
$\hsigma$ is 
\[
\hsigma = VDV^T.
\]
Then $\eta=\xi V$ gives the PCs, and the columns of $V$ are the  
corresponding loading vectors. It is worth noting that $V$ 
can also be obtained by performing the SVD of $X$ (see, for 
example, \cite{ZoHaTi06}). Clearly, the columns of $V$ are orthonormal 
vectors, and moreover $V^T\hsigma V$ is diagonal. We thus immediately see 
that if $\hsigma=\Sigma$, the corresponding PCs are uncorrelated; 
otherwise, they can be correlated with each other (see Section 
\ref{formulation} for details). We now describe several important 
properties of the PCs obtained by the standard PCA when $\Sigma$ 
is well estimated by $\hsigma$ (see also \cite{ZoHaTi06}): 
\bi
\item[1.] The PCs sequentially capture the maximum variance of the variables 
approximately, thus encouraging minimal information loss as much as possible;
\item[2.] The PCs are nearly uncorrelated, so the explained variance by 
different PCs has small overlap;  
\item[3.] The PCs point in orthogonal directions, that is, their loading 
vectors are orthogonal to each other. 
\ei
In practice, typically the first few PCs are enough to represent the data, 
thus a great dimensionality reduction is achieved. In spite of the popularity 
and success of PCA due to these nice features, PCA has an obvious drawback, 
that is, PCs are usually linear combinations of all $p$ variables and the 
loadings are typically nonzero. This makes it often difficult to interpret 
the PCs, especially when $p$ is large. Indeed, in many applications, the original 
variables have concrete physical meaning. For example in biology, each 
variable might represent the expression level of a gene. In these cases, 
the interpretation of PCs would be facilitated if they were composed only 
from a small number of the original variables, namely, each PC involved a 
small number of nonzero loadings. It is thus imperative to develop sparse 
PCA techniques for finding the PCs with sparse loadings while enjoying the 
above three nice properties as much as possible.       
            
Sparse PCA has been an active research topic for more than a decade. The first 
class of approaches are based on ad-hoc methods by post-processing the PCs 
obtained from the standard PCA mentioned above. For example, Jolliffe \cite{Jol95} 
applied various rotation techniques to the standard PCs for obtaining sparse loading 
vectors. Cadima and Jolliffe \cite{CaJo95} proposed a simple thresholding approach 
by artificially setting to zero the standard PCs' loadings with absolute values 
smaller than a threshold. In recent years, optimization approaches have been 
proposed for finding sparse PCs. They usually formulate sparse PCA into an optimization 
problem, aiming at achieving the sparsity of loadings while maximizing the explained 
variance as much as possible. For instance, Jolliffe et al.\ \cite{JoTrUd03} proposed 
an interesting algorithm, called SCoTLASS, for finding sparse orthogonal loading 
vectors by sequentially maximizing the approximate variance explained by each PC 
under the $l_1$-norm penalty on loading vectors. Zou et al.\ \cite{ZoHaTi06} 
formulated sparse PCA as a regression-type optimization problem and imposed a 
combination of $l_1$- and $l_2$-norm penalties on the regression coefficients. 
d'Aspremont et al.\ \cite{DaElJoLa07} proposed a method, called DSPCA, for 
finding sparse PCs by solving a sequence of semidefinite program relaxations 
of sparse PCA. Shen and Huang \cite{ShHu07} recently developed an approach 
for computing sparse PCs by solving a sequence of rank-one matrix approximation 
problems under several sparsity-inducing penalties. Very recently, Journ\'ee et al.\ 
\cite{JoNeRiSe08} formulated sparse PCA as nonconcave maximization problems with 
$l_0$- or $l_1$-norm sparsity-inducing penalties. They showed that these problems 
can be reduced into maximization of a convex function on a compact set, and they 
also proposed a simple but computationally efficient gradient method for finding 
a stationary point of the latter problems. Additionally, greedy methods were 
investigated for sparse PCA by Moghaddam et al.\ \cite{MoWeAv06} and d'Aspremont 
et al.\ \cite{DaBaEl08}. 

The PCs obtained by the above methods \cite{Jol95,CaJo95,JoTrUd03,ZoHaTi06,DaElJoLa07,
ShHu07,MoWeAv06,DaBaEl08,JoNeRiSe08} are usually sparse. However, the aforementioned 
nice properties of the standard PCs are lost to some extent in these sparse PCs. 
Indeed, the likely correlation among the sparse PCs are not considered in these 
methods. Therefore, their sparse PCs can be quite correlated with each other. Also, 
the total explained variance that these methods attempt to maximize can be too 
optimistic as there may be some overlap among the individual variances of 
sparse PCs. Finally, the loading vectors of the sparse PCs given by these 
methods lack orthogonality except SCoTLASS \cite{JoTrUd03}.                        

In this paper we propose a new formulation for sparse PCA by taking into 
account the three nice properties of the standard PCA, that is, maximal 
total explained variance, uncorrelation of PCs, and orthogonality of loading 
vectors. We also explore the connection of this formulation with the standard 
PCA and show that it can be viewed as a certain perturbation of the standard 
PCA. We further propose a novel augmented Lagrangian method for solving a 
class of nonsmooth constrained optimization problems, which is well suited 
for our formulation of sparse PCA. This method differs from the classical augmented 
Lagrangian method in that: i) the values of the augmented Lagrangian functions 
at their approximate minimizers given by the method are bounded from above; and 
ii) the magnitude of penalty parameters outgrows that of Lagrangian multipliers 
(see Section \ref{aug-method} for details). We show that this method converges to 
a {\it feasible} point, and moreover it converges to a first-order stationary 
point under some regularity assumptions. We also propose two nonmonotone gradient 
methods for minimizing a class of nonsmooth functions over a closed convex set, 
which can be suitably applied to the subproblems arising in our augmented 
Lagrangian method. We further establish global convergence and, under a local 
Lipschitzian error bounds assumption \cite{TseYun09}, local linear rate of convergence 
for these gradient methods. Finally, we compare the sparse PCA approach proposed 
in this paper with several existing methods \cite{ZoHaTi06,DaElJoLa07,ShHu07,
JoNeRiSe08} on synthetic, random, and real data, respectively. The computational 
results demonstrate that the sparse PCs obtained by our approach substantially 
outperform those by the other methods in terms of total explained variance, 
correlation of PCs, and orthogonality of loading vectors.

The rest of paper is organized as follows. 
In Section \ref{formulation}, we propose a new formulation for sparse PCA and 
explore the connection of this formulation with the standard PCA. In Section 
\ref{aug-nonsmooth}, we then develop a novel augmented Lagrangian method for 
a class of nonsmooth constrained problems, and propose two nonmonotone gradient 
methods for minimizing a class of nonsmooth functions over a closed convex set.
In Section \ref{aug-spca}, we discuss the applicability and implementation details 
of our augmented Lagrangian method for sparse PCA. The sparse PCA approach proposed 
in this paper is then compared with  several existing methods on synthetic, random, 
and real data in Section \ref{comp}. Finally, we present some concluding 
remarks in Section \ref{concl-remark}. 
 
\subsection{Notation} \label{notation}

In this paper, all vector spaces are assumed to be finite dimensional. The 
symbols $\Re^n$ and $\Re^n_+$ (resp., $\Re^n_-$) denote the $n$-dimensional 
Euclidean space and the nonnegative (resp., nonpositive) orthant of $\Re^n$, 
respectively, and $\Re_{++}$ denotes the set of positive real numbers. The 
space of all $m \times n$ matrices with real entries is denoted by $\Re^{m \times n}$. 
The space of symmetric $n \times n$ matrices is denoted by $\cS^n$. 
Additionally, $\cD^n$ denotes the space of $n \times n$ diagonal matrices. 
For a real matrix $X$, we denote by $|X|$ the absolute value of $X$, that is, 
$|X|_{ij}=|X_{ij}|$ for all $ij$, and by $\sign (X)$ the sign of $X$ whose 
$ij$th entry equals the sign of $X_{ij}$ for all $ij$. Also, the nonnegative 
part of $X$ is denoted by $[X]^+$ whose $ij$th entry is given by $\max\{0,X_{ij}\}$ 
for all $ij$. The rank of $X$ is denoted by $\rank(X)$. Further, 
the identity matrix and the all-ones matrix are denoted 
by $I$ and $E$, respectively, whose dimension should be clear from the context. 
If $X \in \cS^n$ is positive semidefinite, we write $X \succeq 0$. For any $X$, 
$Y \in \cS^n$, we write $X \preceq Y$ to mean $Y-X \succeq 0$. Given 
matrices $X$ and $Y$ in $\Re^{m \times n}$, the standard inner product is defined 
by $X \bt Y := \tr (XY^T)$, where $\tr(\cdot)$ denotes the trace of a matrix, and  
the component-wise product is denoted by $X \odot Y$, whose $ij$th entry is 
$X_{ij}Y_{ij}$ for all $ij$. $\|\cdot\|$ denotes the Euclidean norm and its 
associated operator norm unless it is explicitly stated otherwise. The minimal 
(resp., maximal) eigenvalue of an $n\times n$ symmetric matrix $X$ are denoted by 
$\lmin(X)$ (resp., $\lmax(X)$), respectively, and $\lambda_i(X)$ denotes its 
$i$th largest eigenvalue for $i=1,\ldots,n$. Given a vector $v\in\Re^n$, 
$\Diag(v)$ or $\Diag(v_1,\ldots,v_n)$ denotes a diagonal matrix whose $i$th 
diagonal element is $v_i$ for $i=1,\ldots,n$. Given an $n \times n$ matrix $X$, 
${\widetilde\Diag}(X)$ denotes a diagonal matrix whose $i$th diagonal element 
is $X_{ii}$ for $i=1,\ldots,n$. Let $\cU$ be a real vector space. Given a 
closed convex set $C \subseteq \cU$, let $\dist(\cdot,C): \cU \to \Re_+$ 
denote the distance function to $C$ measured in terms of $\|\cdot\|$, that 
is, 
\beq \label{def_d_ck}
\dist(u,C) := \inf_{\tilde u \in C} \|u - \tilde u\|
\ \ \ \  \forall u \in \cU.
\eeq

\section{Formulation for sparse PCA}
\label{formulation}

In this section we propose a new formulation for sparse PCA by taking 
into account sparsity and orthogonality of loading vectors, and 
uncorrelation of PCs. We also address the connection of our 
formulation with the standard PCA.     

Let $\xi = (\xi^{(1)}, \ldots, \xi^{(p)})$ be a $p$-dimensional random 
vector with covariance matrix $\Sigma$. Suppose $X$ is an $n \times p$ 
data matrix, which records the $n$ observations of $\xi$. Without loss 
of generality, assume the column means of $X$ are $0$. Then the commonly 
used sample covariance matrix of $\xi$ is $\hsigma = X^T X/(n-1)$. For 
any $r$ loading vectors represented as $V = [V_1, \ldots, V_r] \in \Re^{p 
\times r}$ where $1 \le r \le p$, the corresponding components are given 
by $\eta=(\eta^{(1)}, \ldots, \eta^{(r)})=\xi V$, which are linear 
combinations of $\xi^{(1)}, \ldots, \xi^{(p)}$. Clearly, the covariance 
matrix of $\eta$ is $V^T\Sigma V$, and thus the components $\eta^{(i)}$ and 
$\eta^{(j)}$ are uncorrelated if and only if the $ij$th entry of 
$V^T\Sigma V$ is zero. Also, the total explained variance by the components 
$\eta^{(i)}$'s equals, if they are uncorrelated, the sum of the 
individual variances of $\eta^{(i)}$'s, that is, 
\[
\sum^{r}_{i=1} V^T_i \Sigma V_i = \tr(V^T \Sigma V). 
\]       
Recall that our aim is to find a set of sparse and orthogonal loading vectors 
$V$ so that the corresponding components $\eta^{(1)}, \ldots, \eta^{(r)}$ are 
uncorrelated and explain as much variance of the original variables $\xi^{(1)}, 
\ldots, \xi^{(p)}$ as possible. It appears that our goal can be achieved 
by solving the following problem:
\beq \label{diag-form}
\ba{rl}
\max\limits_{V \in \Re^{n \times r}} & \tr(V^T \Sigma V) - \rho \bt |V| \\ [4pt]
\mbox{s.t.} & V^T \Sigma V \ \mbox{is \ diagonal}, \\ [5pt]
& V^T V = I,   
\ea
\eeq 
where $\rho\in\Re^{p \times r}_+$ is a tunning parameter for controlling 
the sparsity of $V$. However, the covariance matrix $\Sigma$ is typically 
unknown and can only be approximated by the sample covariance matrix 
$\hsigma$. It looks plausible to modify \eqnok{diag-form} by simply 
replacing $\Sigma$ with $\hsigma$ at a glance. Nevertheless, such a 
modification would eliminate all optimal solutions $V^*$ of \eqnok{diag-form} 
from consideration since $(V^*)^T \hsigma V^*$ is generally non-diagonal. 
For this reason, given a sample covariance $\hsigma$, we consider the 
following formulation for sparse PCA, which can be viewed as a modification 
of problem \eqnok{diag-form},
\beq \label{diag-approx}
\ba{rl}
\max\limits_{V \in \Re^{n \times r}} & \tr(V^T \hsigma V) - \rho \bt |V| \\ [4pt]
\mbox{s.t.} &   |V^T_i \hsigma V_j| \le \Delta_{ij}  \  \  \
\forall i \neq j, \\ [5pt]
& V^T V = I,   
\ea
\eeq 
where $\Delta_{ij} \ge 0$ $(i \neq j)$  are the parameters for controlling 
the correlation of the components corresponding to $V$. Clearly, $\Delta_{ij} 
= \Delta_{ji}$ for all $i \neq j$.     
        
We next explore the connection of formulation \eqnok{diag-approx} with the 
standard PCA. Before proceeding, we state a technical lemma as follows 
that will be used subsequently. Its proof can be found in \cite{OveWom93}.       

\begin{lemma} \label{sum-eig1} 
Given any $\hsigma \in \cS^n$ and integer $1\le r \le n$, define 
\beq \label{ir}
\lir = \max\{1 \le i \le n: 
\lambda_i(\hsigma) > \lambda_r(\hsigma)\}, \gap  \uir = \max\{1 \le i \le n: 
\lambda_i(\hsigma) = \lambda_r(\hsigma)\},
\eeq
and let $f^*$ be the optimal value of   
\beq \label{relax-prob0}
\max \{\tr(\hsigma Y): \ 0 \preceq Y \preceq I, \ \tr(Y) = r \}.   
\eeq 
Then, $f^* = \sum^r_{i=1} \lambda_i(\hsigma)$, and $Y^*$ is an optimal solution 
of \eqnok{relax-prob0} if and only if $Y^*=U^*_1 U^{*T}_1 + U^*_2 P^* U^{*T}_2 $, 
where $P^* \in \cS^{\uir-\lir}$ satisfies $0 \preceq P^* \preceq I$ and $\tr(P^*) 
= r - \lir$, and  $U^*_1 \in \Re^{n \times \lir}$ and  $U^*_2 \in  \Re^{n \times (\uir-\lir)}$ 
are the matrices whose columns consist of the orthonormal eigenvectors of $\hsigma$ 
corresponding to the eigenvalues $(\lambda_1(\hsigma), \ldots, \lambda_\lir(\hsigma))$ 
and  $(\lambda_{\lir+1}(\hsigma), \ldots, \lambda_\uir(\hsigma))$, respectively.  
\end{lemma}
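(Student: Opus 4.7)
The plan is to reduce to a diagonal problem via a unitary change of variables, and then combine a rearrangement-type inequality with a standard positive-semidefinite fact. First I would diagonalize $\hsigma = U \Lambda U^T$, where $U$ is orthogonal, $\Lambda = \Diag(\lambda_1(\hsigma),\ldots,\lambda_n(\hsigma))$ is arranged nonincreasingly, the first $\lir$ columns of $U$ span the eigenspaces associated with eigenvalues strictly larger than $\lambda_r(\hsigma)$, and the next $\uir-\lir$ columns span the $\lambda_r(\hsigma)$-eigenspace. Setting $Z := U^T Y U$ preserves the constraints $0 \preceq Y \preceq I$ and $\tr(Y)=r$, since they are invariant under orthogonal conjugation, and transforms the objective into $\tr(\Lambda Z) = \sum_{i=1}^n \lambda_i(\hsigma) Z_{ii}$. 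Thus the problem reduces to maximizing this diagonal-only objective over $\{Z: 0 \preceq Z \preceq I,\ \tr(Z)=r\}$.

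For the optimal value, the constraints force $Z_{ii}\in[0,1]$ and $\sum_{i=1}^r(1-Z_{ii}) = \sum_{i=r+1}^n Z_{ii}$. A direct algebraic rearrangement then yields
\beq \nn
\sum_{i=1}^r \lambda_i(\hsigma) - \sum_{i=1}^n \lambda_i(\hsigma) Z_{ii} = \sum_{i=1}^r \bigl(\lambda_i(\hsigma) - \lambda_r(\hsigma)\bigr)(1-Z_{ii}) + \sum_{i=r+1}^n \bigl(\lambda_r(\hsigma) - \lambda_i(\hsigma)\bigr) Z_{ii},
\eeq
and every summand on the right is nonnegative, so $f^* \le \sum_{i=1}^r \lambda_i(\hsigma)$. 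Equality is attained by $Z=\Diag(1,\ldots,1,0,\ldots,0)$, establishing the claimed optimal value.

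For the characterization of optimizers, equality forces each summand above to vanish. By the definitions of $\lir$ and $\uir$, this gives $Z_{ii}=1$ for all $i\le \lir$ and $Z_{ii}=0$ for all $i>\uir$. I would then invoke the standard fact that if $A\succeq 0$ and $A_{ii}=0$, the entire $i$th row and column of $A$ vanish. Applying this to $Z$ on indices $i>\uir$ and to $I-Z$ on indices $i\le\lir$ shows that $Z$ is block-diagonal: the identity on the first $\lir$ coordinates, an arbitrary $P\in\cS^{\uir-\lir}$ with $0\preceq P\preceq I$ and $\tr(P)=r-\lir$ on the middle block, and zero on the remaining $n-\uir$ coordinates. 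Partitioning the columns of $U$ accordingly and undoing the substitution via $Y=UZU^T$ produces the claimed form $Y^* = U_1^* U_1^{*T} + U_2^* P^* U_2^{*T}$. The only subtle point is the bookkeeping around the degenerate eigenvalue block of size $\uir-\lir$, where the optimal $Z$ is genuinely not unique; everything else is a routine reduction to the diagonal case.
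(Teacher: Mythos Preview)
Your proof is correct. The paper itself does not give a proof of this lemma; it simply cites Overton and Womersley \cite{OveWom93}. Your argument supplies a clean, self-contained proof: the unitary change of variables reduces the problem to the diagonal case, the rearrangement identity you wrote down yields both the optimal value and the tight conditions on the diagonal of $Z$, and the standard fact that a zero diagonal entry of a positive semidefinite matrix kills its row and column (applied to $Z$ and $I-Z$) forces the block-diagonal structure. One cosmetic remark: the quantity $U_1^* U_1^{*T}$ is canonical (it is the orthogonal projector onto the sum of eigenspaces with eigenvalues strictly above $\lambda_r(\hsigma)$), while in the middle block a different choice of orthonormal basis $U_2^*$ merely re-parametrizes $P^*$, so the description of the optimal set is basis-independent as it should be.
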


We next address the relation between the eigenvectors of $\hsigma$ and the 
solutions of problem \eqnok{diag-approx} when $\rho=0$ and $\Delta_{ij}=0$ 
for all $i \neq j$.

\begin{proposition} \label{prop1}
Suppose for problem \eqnok{diag-approx} that $\rho=0$ and $\Delta_{ij}=0$ for all 
$i \neq j$. Let $f^*$ be the optimal value of \eqnok{diag-approx}. Then, $f^* = 
\sum^r_{i=1} \lambda_i(\hsigma)$, and $V^*\in \Re^{n \times r}$ is an optimal 
solution of \eqnok{diag-approx} if and only if the columns of $V^*$ consist of 
the orthonormal eigenvectors of $\hsigma$ corresponding to $r$ largest 
eigenvalues of $\hsigma$.
\end{proposition}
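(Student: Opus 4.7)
The plan is to bracket the optimal value of \eqnok{diag-approx} (with $\rho = 0$ and $\Delta_{ij} = 0$) using Lemma \ref{sum-eig1}, then exploit its characterization of the optimizers of \eqnok{relax-prob0} to pin down $V^*$.

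First I would obtain the bound $f^* \le \sum_{i=1}^r \lambda_i(\hsigma)$ by observing that every feasible $V$ of \eqnok{diag-approx} yields a feasible $Y := VV^T$ for \eqnok{relax-prob0}: indeed $V^T V = I$ makes $Y$ a rank-$r$ orthogonal projection, so $0 \preceq Y \preceq I$ and $\tr(Y) = \tr(V^T V) = r$, while $\tr(V^T \hsigma V) = \tr(\hsigma Y)$. Lemma \ref{sum-eig1} then gives the upper bound. To see it is attained (and to prove the ``if'' direction), one directly checks that if the columns of $V^*$ are any $r$ orthonormal eigenvectors of $\hsigma$ for the top $r$ eigenvalues---in particular, when $\lambda_r(\hsigma)$ is a repeated eigenvalue, picking $r - \lir$ orthonormal vectors from the $\lambda_r$-eigenspace---then $V^{*T} V^* = I$, $V^{*T} \hsigma V^* = \Diag(\lambda_1(\hsigma), \ldots, \lambda_r(\hsigma))$ is diagonal, and $\tr(V^{*T} \hsigma V^*) = \sum_{i=1}^r \lambda_i(\hsigma)$.

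For the ``only if'' direction, let $V^*$ be optimal and set $Y^* := V^* V^{*T}$. Since $\tr(\hsigma Y^*) = f^* = \sum_{i=1}^r \lambda_i(\hsigma)$, $Y^*$ is optimal for \eqnok{relax-prob0}, so Lemma \ref{sum-eig1} supplies $Y^* = U_1^* U_1^{*T} + U_2^* P^* U_2^{*T}$ with $0 \preceq P^* \preceq I$ and $\tr(P^*) = r - \lir$. Because $V^{*T} V^* = I$ makes $Y^*$ itself a rank-$r$ orthogonal projection, expanding $(Y^*)^2 = Y^*$ and using $U_1^{*T} U_2^* = 0$ forces $(P^*)^2 = P^*$, so $P^*$ is in fact an orthogonal projection of rank $r - \lir$. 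Writing $P^* = Q Q^T$ for a matrix $Q$ with orthonormal columns, one then has $\range(V^*) = \range(Y^*) = \range(U_1^*) + \range(U_2^* Q)$; the first summand is a union of eigenspaces of $\hsigma$ and the second sits inside the $\lambda_r(\hsigma)$-eigenspace, so $\range(V^*)$ is $\hsigma$-invariant and the restriction of $\hsigma$ to it is a symmetric operator on an $r$-dimensional space whose eigenvalues are exactly $\lambda_1(\hsigma), \ldots, \lambda_r(\hsigma)$.

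To finish, since the orthonormal columns of $V^*$ span this invariant subspace and $V^{*T} \hsigma V^*$ is diagonal, those columns form an orthonormal eigenbasis of the restricted operator, and by invariance they are eigenvectors of $\hsigma$ itself carrying the top $r$ eigenvalues, as required. The main obstacle lies in this necessity argument when $\lir < \uir$: there $Y^*$ is not uniquely determined, so $V^*$ cannot simply be identified with any one ``top-$r$ eigenvector matrix.'' The key move is to upgrade the generic $0 \preceq P^* \preceq I$ from Lemma \ref{sum-eig1} into an actual projection via the rank-$r$ property of $Y^*$; this is what makes $\range(V^*)$ invariant under $\hsigma$ and allows the diagonality constraint to close the argument.
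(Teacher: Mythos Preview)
Your proposal is correct and follows essentially the same route as the paper: both bound $f^*$ by embedding feasible $V$ into \eqnok{relax-prob0} via $Y=VV^T$, invoke Lemma~\ref{sum-eig1} to get the decomposition $Y^*=U_1^*U_1^{*T}+U_2^*P^*U_2^{*T}$, show $P^*$ is an orthogonal projection, and then use the diagonality constraint to conclude that the columns of $V^*$ are eigenvectors. The only differences are cosmetic: you obtain $(P^*)^2=P^*$ from the idempotence of $Y^*$ (the paper instead counts ranks and uses $\tr(P^*)=r-\lir$), and you finish via the $\hsigma$-invariance of $\range(V^*)$ together with $V^{*T}\hsigma V^*$ being diagonal, whereas the paper writes $V^*=U^*Q$ and verifies $\hsigma V^*=V^*D$ by direct matrix calculation.
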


\begin{proof}
We first show that $f^*=\sum^r_{i=1} \lambda_i(\hsigma)$. Indeed, let $U$ be an 
$n\times r$ matrix whose columns consist of the orthonormal eigenvectors of 
$\hsigma$ corresponding to $r$ largest eigenvalues of $\hsigma$. We then see that 
$U$ is a feasible solution of \eqnok{diag-approx} and $\tr(U^T \hsigma U) = \sum^r_{i=1} 
\lambda_i(\hsigma)$. It follows that $f^* \ge \sum^r_{i=1} \lambda_i(\hsigma)$. On the 
other hand, we observe that $f^*$ is bounded above by the optimal value of 
\[
\max \{\tr(V^T \hsigma V): \ V^T V = I, \ V\in\Re^{n \times r}\}.   
\]
We know from \cite{Fan49} that its optimal value equals $\sum^r_{i=1} \lambda_i(\hsigma)$. 
Therefore, $f^*=\sum^r_{i=1} \lambda_i(\hsigma)$ holds and $U$ is an optimal 
solution of \eqnok{diag-approx}. It also implies that the ``if'' part of this 
proposition holds. We next show that the ``only if'' part also holds. Let 
$V^*\in \Re^{n \times r}$ be an optimal solution of \eqnok{diag-approx}, and define 
$Y^*=V^*V^{*T}$. Then, we have $V^{*T}V^*=I$, which yields $0 \preceq Y^* \preceq I$ 
and $\tr(Y^*)=r$. Hence, $Y^*$ is a feasible solution of \eqnok{relax-prob0}. 
Using the fact that $f^*=\sum^r_{i=1} \lambda_i(\hsigma)$, we then have
\[ 
\tr(\hsigma Y^*)=\tr(V^{*T}\hsigma V^*)=\sum^r_{i=1} \lambda_i(\hsigma),
\]
which together with Lemma \ref{sum-eig1} implies that $Y^*$ is an optimal solution 
of \eqnok{relax-prob0}. Let $\lir$ and $\uir$ be defined in \eqnok{ir}. Then, it follows 
from Lemma \ref{sum-eig1} that $Y^*=U^*_1 U^{*T}_1 + U^*_2 P^* U^{*T}_2$, 
where $P^* \in \cS^{\uir-\lir}$ satisfies $0 \preceq P^* \preceq I$ and $\tr(P^*) 
= r - \lir$, and  $U^*_1 \in \Re^{n \times \lir}$ and  $U^*_2 \in  \Re^{n \times (\uir-\lir)}$ 
are the matrices whose columns consist of the orthonormal eigenvectors of $\hsigma$ 
corresponding to the eigenvalues $(\lambda_1(\hsigma), \ldots, \lambda_\lir(\hsigma))$ 
and  $(\lambda_{\lir+1}(\hsigma), \ldots, \lambda_\uir(\hsigma))$, respectively. 
Thus, we have 
\beq \label{U-prop}
\hsigma U^*_1 = U^*_1 \Lambda, \ \ \ \hsigma U^*_2 = \lambda_r(\hsigma)U^*_2,   
\eeq
where $\Lambda = \Diag(\lambda_1(\hsigma), \ldots, \lambda_\lir(\hsigma))$. In 
addition, it is easy to show that $\rank(Y^*) = \lir + \rank(P^*)$. Since $Y^*=V^*V^{*T}$ 
and $V^{*T}V^*=I$, we can observe that $\rank(Y^*)=r$. Hence, $\rank(P^*)=r-\lir$, 
which implies that $P^*$ has only $r-\lir$ nonzero eigenvalues. Using this fact 
and the relations $0 \preceq P^* \preceq I$ and $\tr(P^*) = r - \lir$, we can further 
conclude that $r-\lir$ eigenvalues of $P^*$ are $1$ and the rest are $0$. Therefore, 
there exists $W\in \Re^{(\uir-\lir) \times (r-\lir)}$ such that 
\beq \label{W}
W^TW = I, \ \ \ \ P^* = WW^T.
\eeq    
It together with $Y^*=U^*_1 U^{*T}_1 + U^*_2 P^* U^{*T}_2$ implies that 
$Y^* = U^*U^{*T}$, where $U^* = [U^*_1 \ \ U^*_2 W]$. In view of \eqnok{W} and 
the identities $U^{*T}_1 U^*_1=I$, $U^{*T}_2 U^*_2=I$ and $U^{*T}_1 U^*_2=0$, we 
see that $U^{*T}U^*=I$. Using this result, and the relations $V^{*T}V^*=I$ and 
$Y^*=U^* U^{*T}=V^* V^{*T}$, it is not hard to see that the columns of $U^*$ and 
$V^*$ form an orthonormal basis for the range space of $Y^*$, respectively. Thus, 
$V^*=U^*Q$ for some $Q\in\Re^{r \times r}$ satisfying $Q^TQ=I$. Now, 
let $D=V^{*T}\hsigma V^*$. By the definition of $V^*$, we know that $D$ is 
an $r \times r$ diagonal matrix. Moreover, in view of \eqnok{U-prop}, \eqnok{W}, the 
definition of $U^*$, and the relations $V^*=U^*Q$, $U^{*T}_1 U^*_1=I$, 
$U^{*T}_2 U^*_2=I$ and $U^{*T}_1 U^*_2=0$, we have
\beqa 
D &=& V^{*T} \hsigma V^* \ = \ Q^T U^{*T} \hsigma U^* Q \ = \ Q^T \left[\ba{c}
U^{*T}_1 \\
W^T U^{*T}_2
\ea\right] \hsigma \left[ U^*_1 \ \ U^*_2W\right] Q \nn \\
&=& Q^T \left[\ba{cc} 
\Lambda & 0 \\
0 & \lambda_r(\hsigma) I
\ea\right] Q, \label{D}
\eeqa
which together with $Q^TQ=I$ implies that $D$ is similar to the diagonal matrix 
appearing on the right-hand side of \eqnok{D}.  
Hence, the diagonal elements of $D$ consist of $r$ largest eigenvalues of $\hsigma$. 
In addition, let $Q_1\in\Re^{\lir \times r}$ and $Q_2\in\Re^{(r-\lir) \times r}$ be 
the submatrices corresponding to the first $\lir$ and the last $r-\lir$ rows of $Q$, 
respectively. Then, in view of the definition of $U^*$ and $V^*=U^*Q$, we have 
\[
[U^*_1 \ \ U^*_2 W] \ =  \ U^* = V^* Q^T  \ = \ [V^*Q^T_1 \ \ V^*Q^T_2]. 
\]
Thus, we obtain that $U^*_1 = V^*Q^T_1$ and $U^*_2 W=V^*Q^T_2$. Using these 
identities, \eqnok{U-prop}, \eqnok{D}, and the relation $V^* = U^* Q$,  we have 
\beqas
\hsigma V^* &=& \hsigma U^* Q = \hsigma \left[ U^*_1 \ \ U^*_2W\right] Q 
\ = \ [U^*_1 \Lambda \ \  \lambda_r(\hsigma)U^*_2 W] Q \\
&=&  [V^*Q^T_1\Lambda \ \ \lambda_r(\hsigma) V^*Q^T_2] Q \ = \ V^* Q^T \left[\ba{cc} 
\Lambda & 0 \\
0 & \lambda_r(\hsigma) I
\ea\right] Q \ = \ V^* D.
\eeqas   
It follows that the columns of $V^*$ consist of the orthonormal eigenvectors of $\hsigma$ 
corresponding to $r$ largest eigenvalues of $\hsigma$, and thus the ``only if'' part of 
this proposition holds.     
\end{proof}

\gap
 
 From the above proposition, we see that when $\rho=0$ and $\Delta_{ij}=0$ for all $i \neq j$, 
each solution of \eqnok{diag-approx} consists of the orthonormal eigenvectors of $\hsigma$ 
corresponding to $r$ largest eigenvalues of $\hsigma$, which can be computed from the 
eigenvalue decomposition of $\hsigma$. Therefore, the loading vectors obtained from 
\eqnok{diag-approx} are the same as those given by the standard PCA when applied to 
$\hsigma$. On the other hand, when $\rho$ and $\Delta_{ij}$ for all $i \neq j$ are 
small, the loading vectors found by \eqnok{diag-approx} can be viewed as an 
approximation to the ones provided by the standard PCA. We will propose suitable 
methods for solving \eqnok{diag-approx} in Sections \ref{aug-nonsmooth} and 
\ref{aug-spca}.          

\section{Augmented Lagrangian method for nonsmooth constrained 
nonlinear programming} \label{aug-nonsmooth}

In this section we propose a novel augmented Lagrangian method for a class of 
nonsmooth constrained nonlinear programming problems, which is well suited for 
formulation \eqnok{diag-approx} of sparse PCA. In particular, we study first-order 
optimality conditions in Subsection \ref{1st-optcond}. In Subsection \ref{aug-method}, 
we develop an augmented Lagrangian method and establish its global convergence. In 
Subsection \ref{grad-method}, we propose two nonmonotone gradient methods for minimizing 
a class of nonsmooth functions over a closed convex set, which can be suitably applied to 
the subproblems arising in our augmented Lagrangian method. We also establish global and 
local convergence for these gradient methods.  

\subsection{First-order optimality conditions}
\label{1st-optcond}

In this subsection we introduce a class of nonsmooth constrained nonlinear 
programming problems and study first-order optimality conditions for them. 

Consider the nonlinear programming problem
\beq \label{nlp}
\begin{array}{rl}
\min & f(x) + \q(x) \\ 
\mbox{s.t.} & g_i(x) \le 0, \ \ i = 1, \ldots, m, \\
& h_i(x) = 0, \ \ i = 1, \ldots, p, \\
& x \in X. 
\end{array}   
\eeq 
We assume that the functions $f: \Re^n \to \Re$, $g_i: \Re^n \to \Re$, 
$i=1, \ldots, m$, and $h_i: \Re^n \to \Re$, $i=1, \ldots, p$, are 
continuously differentiable, and that the function $\q: \Re^n \to \Re$ 
is convex but not necessarily smooth, and that the set $X \subseteq \Re^n$ 
is closed and convex. For convenience of the subsequent presentation, 
we denote by $\Omega$ the feasible region of problem \eqnok{nlp}.  

Before establishing first-order optimality conditions for problem  
\eqnok{nlp}, we describe a general constraint qualification condition 
for \eqnok{nlp}, that is, Robinson's condition that was proposed in 
\cite{Rob76}.    

Let $x\in\Re^n$ be a feasible point of problem \eqnok{nlp}. We denote  
the set of active inequality constraints at $x$ as   
\[
\cA(x) = \{1\le i \le m: g_i(x) = 0 \}. 
\]
In addition, $x$ is said to satisfy {\it Robinson's condition} if 
\beq \label{rob-cond}
\left \{\left[\ba{c} g'(x)d - v \\ 
h'(x) d 
\ea \right]: d \in T_{X}(x), v \in \Re^m, v_i \le 0, 
i \in \cA(x)\right\} = \Re^m \times \Re^p,
\eeq
where $g'(x)$ and $h'(x)$ denote the Jacobian of the functions $g=(g_1,
\ldots,g_m)$ and $h=(h_1,\ldots,h_p)$ at $x$, respectively. Other 
equivalent expressions of Robinson's condition can be found, for 
example, in \cite{Rob76,Rob83,Rus06}. 

The following proposition demonstrates that Robinson's condition is indeed  
a constraint qualification condition for problem \eqnok{nlp}. For the sake of 
completeness, we include a brief proof for it. 

\begin{proposition} \label{constrq}
Given a feasible point $x\in\Re^n$ of problem \eqnok{nlp}, let $T_{\Omega}(x)$ 
be the tangent cone to $\Omega$ at $x$, and $(T_{\Omega}(x))^\circ$ be its polar 
cone. If Robinson's condition \eqnok{rob-cond} holds at $x$, then
\beqa
T_{\Omega}(x) &=& \left\{d\in T_X(x): \ba{ll} 
d^T \nabla g_i(x)  \le 0, & i \in \cA(x), \\
d^T \nabla h_i(x) = 0, & i=1, \ldots, p \ea
\right\},  \nn \\
(T_{\Omega}(x))^\circ &=& \left\{\sum\limits_{i\in \cA(x)}  \lambda_i 
\nabla g_i(x) +  \sum\limits_{i=1}^p \mu_i \nabla h_i(x) + N_X(x): 
\lambda \in \Re^m_+, \ \mu\in\Re^p\right\}, \label{tangent-p}
\eeqa 
where $T_X(x)$ and $N_X(x)$ are the tangent and normal cones to $X$ at $x$, 
respectively.
\end{proposition}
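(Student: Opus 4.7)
The plan splits into two tasks: establishing the primal characterization $T_\Omega(x)=L(x)$, where $L(x)$ denotes the linearized cone on the right-hand side of the first display of the proposition, and then deriving the polar formula from it by a Farkas-type argument. The easy inclusion $T_\Omega(x)\subseteq L(x)$ does not require Robinson's condition: given $d\in T_\Omega(x)$, write $d=\lim_k(x_k-x)/t_k$ with $x_k\in\Omega$ and $t_k\downarrow 0$, note that $d\in T_X(x)$ because $x_k\in X$ and $X$ is convex, and use first-order Taylor expansion of $g_i$ and $h_i$ at $x$ together with $g_i(x_k)\le 0=g_i(x)$ for $i\in\cA(x)$ and $h_i(x_k)=0=h_i(x)$ to deduce $d^T\nabla g_i(x)\le 0$ and $d^T\nabla h_i(x)=0$, respectively.

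For the reverse inclusion $L(x)\subseteq T_\Omega(x)$ I would invoke Robinson's stability theorem \cite{Rob76}: under \eqnok{rob-cond}, there exists $\kappa>0$ such that $\dist(y,\Omega)\le \kappa\,\|([g(y)]^+,h(y))\|$ for every $y\in X$ sufficiently close to $x$. Given $d\in L(x)\subseteq T_X(x)$, I pick $t_k\downarrow 0$ and $d_k\to d$ with $y_k:=x+t_k d_k\in X$; a Taylor expansion at $x$ combined with the defining inequalities of $L(x)$ then shows $[g_i(y_k)]^+=o(t_k)$ for $i\in\cA(x)$, $g_i(y_k)<0$ for inactive $i$ by continuity, and $h_i(y_k)=o(t_k)$. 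The metric-regularity bound produces $\tilde x_k\in\Omega$ with $\|\tilde x_k-y_k\|=o(t_k)$, whence $(\tilde x_k-x)/t_k\to d$ and thus $d\in T_\Omega(x)$. This feasibility-correction step is the main obstacle, and it is precisely where Robinson's condition is indispensable: without the quantitative stability bound it supplies, $L(x)$ can be strictly larger than $T_\Omega(x)$.

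For the polar formula, I would exploit the primal one just proved. Writing $L(x)$ as the intersection of the convex cone $T_X(x)$ with the half-spaces $\{d: d^T\nabla g_i(x)\le 0\}$ for $i\in\cA(x)$ and the hyperplanes $\{d: d^T\nabla h_i(x)=0\}$ for $i=1,\ldots,p$, the standard polar calculus for intersections of closed convex cones gives $L(x)^\circ=\overline{N_X(x)+K(x)}$, where $K(x)$ denotes the cone $\{\sum_{i\in\cA(x)}\lambda_i\nabla g_i(x)+\sum_{i=1}^p\mu_i\nabla h_i(x):\lambda\in\Re_+^{|\cA(x)|},\,\mu\in\Re^p\}$. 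Robinson's condition \eqnok{rob-cond} is equivalent to a Mangasarian--Fromovitz-type boundedness-of-multipliers property, which forces the sum $N_X(x)+K(x)$ to be closed; the closure can therefore be dropped, yielding exactly the expression in \eqnok{tangent-p}. Verifying this closedness (by a standard compactness argument: any sequence $n^k\in N_X(x)$, $\lambda^k\in\Re_+^{|\cA(x)|}$, $\mu^k\in\Re^p$ with $n^k+\sum\lambda^k_i\nabla g_i(x)+\sum\mu^k_i\nabla h_i(x)$ convergent must remain bounded, else normalizing gives a nonzero element violating \eqnok{rob-cond}) is the only nontrivial point left in this paragraph, and completes the proposition.
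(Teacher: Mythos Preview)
Your argument is correct. The paper, however, takes a shorter black-box route: it observes that Robinson's condition \eqnok{rob-cond} is exactly the regularity hypothesis of Theorem~3.15 in \cite{Rus06} (with $X_0=X$, $Y_0=\Re^m_-\times\Re^p$, and $g$ the stacked constraint map), which delivers the tangent-cone formula directly; then, writing the linearized cone as $K=\{d\in K_1:Ad\in K_2\}$ with $K_1=T_X(x)$, $K_2=\Re^{|\cA(x)|}_-\times\Re^p$, and $A$ the Jacobian of the active constraints, it invokes Theorem~2.36 of \cite{Rus06} to obtain $K^\circ=K_1^\circ+\{A^T\xi:\xi\in K_2^\circ\}$, which is precisely \eqnok{tangent-p}. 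Your approach unpacks the content of those two citations: the metric-regularity bound you quote from \cite{Rob76} is what underlies Theorem~3.15, and your closedness-by-normalization argument for $N_X(x)+K(x)$ is essentially the proof of the polar-calculus Theorem~2.36 under the stated constraint qualification. The trade-off is transparency versus economy: your write-up is self-contained and makes explicit where Robinson's condition enters (once for the feasibility correction, once for closedness of the multiplier cone), while the paper's two-line proof is cleaner but opaque unless the reader has \cite{Rus06} at hand.
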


\begin{proof}
By Theorem A.10 of \cite{Rus06}, we see that Robinson's condition \eqnok{rob-cond} 
implies that the assumption of Theorem $3.15$ of \cite{Rus06} is satisfied with 
\[
x_0 = x, \ \ \ X_0 = X, \ \ \ Y_0 = \Re^m_- \times \Re^p, \ \ \ 
g(\cdot) = (g_1(\cdot); \ldots ; g_m(\cdot); h_1(\cdot); \ldots ; h_p(\cdot)).  
\]
The first statement then follows from Theorem $3.15$ of \cite{Rus06} with 
the above $x_0$, $X_0$, $Y_0$ and $g(\cdot)$. Further, let $A(x)$ denote the 
matrix whose rows are the gradients of all active constraints at $x$ in the 
same order as they appear in \eqnok{nlp}. Then, Robinson's condition \eqnok{rob-cond} 
implies that the assumptions of Theorem $2.36$ of \cite{Rus06} are satisfied 
with 
\[
A = A(x), \ \ \ K_1 = T_X(x), \ \ \ K_2 = \Re^{|\cA(x)|}_- \times \Re^p. 
\]
Let $K = \{d\in K_1: A d \in K_2\}$. Then, it follows from Theorem $2.36$ of  
\cite{Rus06} that 
\[
(T_{\Omega}(x))^\circ = K^\circ =  K_1^\circ + \{A^T \xi: \xi \in K_2^\circ\},
\]
which together with the identity $(T_X(x))^\circ = N_X(x)$ and the 
definitions of $A$, $K_1$ and $K_2$, implies that the second statement 
holds. 
\end{proof}

\vgap

We are now ready to establish first-order optimality conditions for problem 
\eqnok{nlp}.

\begin{theorem} \label{opt-thm}
Let $x^*\in\Re^n$ be a local minimizer of problem \eqnok{nlp}. Assume that 
Robinson's condition \eqnok{rob-cond} is satisfied at $x^*$. Then there exist 
Lagrange multipliers $\lambda \in \Re^m_+$ and $\mu \in \Re^p$ such that
\beq \label{1st-opt}
0 \in \nabla f(x^*) + \partial \q(x^*) + \sum\limits_{i=1}^m \lambda_i 
\nabla g_i(x^*) +  \sum\limits_{i=1}^p \mu_i \nabla h_i(x^*) + N_X(x^*),
\eeq
and
\beq \label{complement}
\lambda_i g_i(x^*) = 0, \  \ i=1,\ldots,m.
\eeq
Moreover, the set of Lagrange multipliers $(\lambda, \mu)\in\Re^m_+ 
\times \Re^p$ satisfying the above conditions, denoted by $\Lambda(x^*)$, 
is convex and compact. 
\end{theorem}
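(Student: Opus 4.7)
The plan is to obtain the KKT conditions by pushing the local-minimum first-variation through a convex-analysis translation and Proposition \ref{constrq}, and then to read off the structural properties of $\Lambda(x^*)$ from its affine description together with Robinson's condition.

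Since $x^*$ is a local minimizer of $f + \q$ over $\Omega$, and $f$ is $C^1$ while $\q$ is finite convex on $\Re^n$ (hence locally Lipschitz with directional derivative $\q'(x^*;d) = \sigma_{\partial \q(x^*)}(d)$), a routine limit argument along sequences $x^* + t_k d_k \in \Omega$ with $d_k \to d$ and $t_k \downarrow 0$ gives $\nabla f(x^*)^T d + \q'(x^*;d) \ge 0$ for every $d \in T_\Omega(x^*)$. Using that $\sigma_{N_\Omega(x^*)}(d) = 0$ on $T_\Omega(x^*)$ and $+\infty$ off it, this is exactly $\sigma_{\nabla f(x^*) + \partial \q(x^*) + N_\Omega(x^*)}(d) \ge 0$ for all $d$; since $\partial \q(x^*)$ is compact and $N_\Omega(x^*)$ is closed convex, the sum on the right is closed, so I conclude $0 \in \nabla f(x^*) + \partial \q(x^*) + N_\Omega(x^*)$. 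Invoking Proposition \ref{constrq} under Robinson's condition, I substitute the explicit representation \eqnok{tangent-p} for $N_\Omega(x^*)$ to extract nonnegative multipliers $\lambda$ supported on $\cA(x^*)$ and $\mu \in \Re^p$ realizing \eqnok{1st-opt}; extending $\lambda_i := 0$ for $i \notin \cA(x^*)$ produces the complementarity \eqnok{complement}.

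Convexity of $\Lambda(x^*)$ will follow by rewriting \eqnok{1st-opt} as $-\nabla f(x^*) - \sum_{i=1}^m \lambda_i \nabla g_i(x^*) - \sum_{i=1}^p \mu_i \nabla h_i(x^*) \in \partial \q(x^*) + N_X(x^*)$, whose right-hand side is convex and whose left-hand side is affine in $(\lambda,\mu)$; intersecting the resulting affine preimage with the convex constraints $\lambda \ge 0$ and the linear complementarity $\lambda_i g_i(x^*) = 0$ preserves convexity. Closedness of $\Lambda(x^*)$ follows similarly, using that $\partial \q(x^*) + N_X(x^*)$ is closed (compact plus closed convex cone).

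The main obstacle is boundedness, which is exactly where Robinson's condition must earn its keep. I argue by contradiction: assume $(\lambda^k,\mu^k) \in \Lambda(x^*)$ with $t_k := \|(\lambda^k,\mu^k)\| \to \infty$, and choose $s^k \in \partial \q(x^*)$, $n^k \in N_X(x^*)$ witnessing \eqnok{1st-opt}. Dividing the identity $\nabla f(x^*) + s^k + \sum_i \lambda_i^k \nabla g_i(x^*) + \sum_i \mu_i^k \nabla h_i(x^*) + n^k = 0$ by $t_k$, the term $(\nabla f(x^*)+s^k)/t_k$ vanishes by compactness of $\partial \q(x^*)$, and after passing to a subsequence $(\lambda^k,\mu^k)/t_k \to (\tilde\lambda,\tilde\mu)$ on the unit sphere with $\tilde\lambda \ge 0$ and $\tilde\lambda_i = 0$ for $i \notin \cA(x^*)$. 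Rearranging the identity shows $n^k/t_k$ is bounded, and since $N_X(x^*)$ is a closed cone we may extract $n^k/t_k \to \tilde n \in N_X(x^*)$, yielding $\sum_i \tilde\lambda_i \nabla g_i(x^*) + \sum_i \tilde\mu_i \nabla h_i(x^*) + \tilde n = 0$. For any $(u,v) \in \Re^m \times \Re^p$, Robinson's condition \eqnok{rob-cond} supplies $d \in T_X(x^*)$ and $w$ with $w_i \le 0$ on $\cA(x^*)$ satisfying $g'(x^*)d - w = u$ and $h'(x^*)d = v$; taking the inner product of the limit identity with $d$ and using $\tilde n^T d \le 0$ together with $\tilde\lambda^T w \le 0$ yields $\tilde\lambda^T u + \tilde\mu^T v \ge 0$ for every $(u,v)$, forcing $(\tilde\lambda,\tilde\mu) = 0$ and contradicting unit norm. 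The delicate bookkeeping is verifying that $n^k/t_k$ converges along a subsequence and orchestrating the pairing with $d$ so that Robinson's surjectivity discharges both the active-inequality and equality coordinates simultaneously.
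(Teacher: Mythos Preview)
Your proof is correct and follows essentially the same route as the paper: derive the first-variation inequality $\nabla f(x^*)^T d + \q'(x^*;d)\ge 0$ on $T_\Omega(x^*)$, translate it into $0\in\nabla f(x^*)+\partial\q(x^*)+(T_\Omega(x^*))^\circ$, invoke Proposition~\ref{constrq} to produce the multipliers, and then argue boundedness of $\Lambda(x^*)$ by normalizing a hypothetical unbounded sequence and using Robinson's condition to force the limit direction to vanish. The only cosmetic differences are that you package the separation step via support functions whereas the paper does it by an explicit separating direction, and in the boundedness step you pair the limit identity against a general $(u,v)$ from Robinson's surjectivity while the paper specializes to $(u,v)=(-\bar\lambda,-\bar\mu)$; both are the same argument.
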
  

\begin{proof} 
We first show that 
\beq \label{basic-opt}
d^T\nabla f(x^*) + \q'(x^*; d) \ge 0 \ \ \ \forall d \in  T_{\Omega}(x^*).
\eeq
Let $d \in  T_{\Omega}(x^*)$ be arbitrarily chosen. Then, there exist sequences 
$\{x^k\}^{\infty}_{k=1} \subseteq \Omega$ and $\{t_k\}^{\infty}_{k=1}\subseteq 
\Re_{++}$ such that $t_k \downarrow 0$ and 
\[
d = \lim\limits_{k\to \infty} \frac{x^k-x^*}{t_k}.
\] 
Thus, we have $x^k = x^* + t_k d + o(t_k)$. Using this relation along with the 
fact that the function $f$ is differentiable and $\q$ is convex in $\Re^n$, we 
can have  
\beq \label{estimate}
f(x^* + t_k d) - f(x^k) = o(t_k), \ \ \ \q(x^* + t_k d) - \q(x^k) = o(t_k),
\eeq
where the first equality follows from the Mean Value Theorem while the second 
one comes from Theorem $10.4$ of \cite{Rock70}. Clearly, $x^k \to x^* $. 
This together with the assumption that $x^*$ is a local minimizer of \eqnok{nlp}, 
implies that 
\beq \label{local-min}
f(x^k) + \q(x^k)  \ge f(x^*) + \q(x^*)
\eeq
when $k$ is sufficiently large. In view of \eqnok{estimate} and \eqnok{local-min}, 
we obtain that 
\beqas
d^T\nabla f(x^*) + \q'(x^*; d) &=& \lim\limits_{k \to \infty} \frac{f(x^* + t_k d) 
- f(x^*)}{t_k} + \lim\limits_{k \to \infty} \frac{\q(x^* + t_k d) - \q(x^*)}{t_k}, \\ [5pt] 
&=& \lim\limits_{k \to \infty} \left[\frac{f(x^k)+\q(x^k)-f(x^*)-\q(x^*)}{t_k} + 
\frac{o(t_k)}{t_k} \right], \\ [6pt]
&=& \lim\limits_{k \to \infty} \frac{f(x^k)+\q(x^k)-f(x^*)-\q(x^*)}{t_k} \ \ge \ 0,
\eeqas 
and hence \eqnok{basic-opt} holds. 

For simplicity of notations, let $T_{\Omega}^\circ = (T_{\Omega}(x^*))^\circ$ and 
$S=-\nabla f(x^*) - \partial \q(x^*)$. We next show that $S \cap T_{\Omega}^\circ 
\neq \emptyset$. Suppose for contradiction that $S \cap T_{\Omega}^\circ = \emptyset$. 
This together with the fact that $S$ and $T_{\Omega}^\circ$ are nonempty closed convex 
sets and $S$ is bounded, implies that there exists some $d \in\Re^n$ such that 
$d^T y \le 0$ for any $y \in T_{\Omega}^\circ$, and $d^T y \ge 1$ for any $y \in S$. 
Clearly, we see that $d\in (T_{\Omega}^\circ)^\circ = T_{\Omega}(x^*)$, and  
\[
1 \le \inf_{y\in S} d^T y = \inf_{z \in \partial \q(x^*)} d^T(-\nabla f(x^*)-z) 
= -d^T \nabla f(x^*) - \sup_{z \in \partial \q(x^*)} d^T z = -d^T \nabla f(x^*) 
- \q'(x^*; d),    
\]
which contradicts \eqnok{basic-opt}. Hence, we have $S \cap T_{\Omega}^\circ 
\neq \emptyset$. Using this relation, \eqnok{tangent-p}, the definitions of $S$ and 
$\cA(x^*)$,  and letting $\lambda_i=0$ for $i\notin \cA(x^*)$, we easily see that 
\eqnok{1st-opt} and \eqnok{complement} hold.  

In view of the fact that $\partial \q(x^*)$ and $N_X(x^*)$ are closed and convex, and 
moreover $\partial \q(x^*)$ is bounded, we know that $\partial \q(x^*)+ N_X(x^*)$ 
is closed and convex. Using this result, it is straightforward to see that $\Lambda(x^*)$ 
is closed and convex. We next show that $\Lambda(x^*)$ is bounded. Suppose for 
contradiction that $\Lambda(x^*)$ is unbounded. Then, there exists a sequence 
$\{(\lambda^k, \mu^k)\}^{\infty}_{k=1} \subseteq \Lambda(x^*)$ such that 
$\|(\lambda^k, \mu^k)\| \to \infty$, and 
\beq \label{1st-opt-seq}
0 = \nabla f(x^*) + z^k + \sum\limits_{i=1}^m \lambda^k_i 
\nabla g_i(x^*) +  \sum\limits_{i=1}^p \mu^k_i \nabla h_i(x^*) + v^k 
\eeq         
for some $\{z^k\}^{\infty}_{k=1} \subseteq \partial \q(x^*)$ and 
$\{v^k\}^{\infty}_{k=1} \subseteq N_X(x^*)$. Let 
$(\bar \lambda^k, \bar \mu^k) = (\lambda^k,\mu^k) /{\|(\lambda^k, \mu^k)\|}$. 

By passing to a subsequence if necessary, we can assume that $(\bar \lambda^k, 
\bar \mu^k) \to (\bar\lambda, \bar\mu)$. We clearly see that $\|(\bar\lambda,
\bar\mu)\|=1$, $\bar\lambda\in\Re^m_+$, and $\bar\lambda_i=0$ for $i\notin\cA(x^*)$. 
Note that $\partial \q(x^*)$ is bounded and $N_X(x^*)$ is a closed cone. In view of 
this fact, and upon dividing both sides of \eqnok{1st-opt-seq} by $\|(\lambda^k, \mu^k)\|$ 
and taking limits on a subsequence if necessary, we obtain that             
\beq \label{grad-eqn}
0 = \sum\limits_{i=1}^m \bar\lambda_i \nabla g_i(x^*) +  \sum\limits_{i=1}^p \bar\mu_i 
\nabla h_i(x^*) + \bar v 
\eeq     
for some $\bar v\in N_X(x^*)$. Since Robinson's condition \eqnok{rob-cond} 
is satisfied at $x^*$, there exist $d\in T_X(x^*)$ and $v \in \Re^m$ such that $v_i \le 0$ 
for $i\in\cA(x^*)$, and 
\[
\ba{rcl}
d^T \nabla g_i(x^*) - v_i &=& -\bar\lambda_i \ \ \ \forall i \in \cA(x^*), \\ [4pt]
d^T \nabla h_i(x^*) &=& - \bar\mu_i, \ \ \ i = 1, \ldots, p.
\ea
\]  
Using these relations, \eqnok{grad-eqn} and the fact that $d\in T_X(x^*)$, 
$\bar v\in N_X(x^*)$, $\bar\lambda\in\Re^m_+$, and $\bar\lambda_i=0$ for $i\notin \cA(x^*)$, 
we have  
\[
\ba{lcl}
\sum\limits_{i=1}^m  \bar\lambda^2_i + \sum\limits_{i=1}^p \bar\mu^2_i & \le & 
- \sum\limits_{i=1}^m \bar\lambda_i d^T\nabla g_i(x^*) - \sum\limits_{i=1}^p \bar\mu_i 
d^T \nabla h_i(x^*), \\ 
& = & - d^T\left(\sum\limits_{i=1}^m \bar\lambda_i \nabla g_i(x^*) + \sum\limits_{i=1}^p 
\bar\mu_i \nabla h_i(x^*)\right) \ = \ d^T \bar v \ \le \ 0. 
\ea
\]
It yields $(\bar\lambda,\bar\mu)=(0,0)$, which contradicts the identity 
$\|(\bar\lambda,\bar\mu)\|=1$. Thus, $\Lambda(x^*)$ is bounded. 
\end{proof}

\subsection{Augmented Lagrangian method for \eqnok{nlp}}
\label{aug-method}

For a convex program, it is known that under some mild assumptions, any accumulation 
point of the sequence generated by the classical augmented Lagrangian method is an 
optimal solution  (e.g., see \cite{}). Nevertheless, when problem \eqnok{nlp} is 
a nonconvex program, especially when the function $h_i$ is not affine or $g_i$ is 
nonconvex, the classical augmented Lagrangian method may not even converge to a 
feasible point. To alleviate this drawback, we propose a novel augmented Lagrangian 
method for problem \eqnok{nlp} and establish its global convergence in this 
subsection.

Throughout this subsection, we make the following assumption for 
problem \eqnok{nlp}.

\begin{assumption} \label{assum-aug}
Problem \eqnok{nlp} is feasible, and moreover at least a feasible 
solution, denoted by  $x^{\rm feas}$, is known.  
\end{assumption}

It is well-known that for problem \eqnok{nlp} the associated augmented Lagrangian 
function $L_{\vrho}(x,\lambda,\mu): \Re^n \times \Re^m \times \Re^p \to \Re$ is 
given by 
\beq \label{aug} 
L_{\vrho}(x,\lambda,\mu) := \w(x) + \q(x), 
\eeq
where 
\beq \label{wx}
\w(x) := f(x) + \frac{1}{2\vrho}(\|[\lambda+\vrho g(x)]^+\|^2-\|\lambda\|^2) + 
\mu^T h(x) + \frac{\vrho}{2} \|h(x)\|^2,
\eeq
and $\vrho>0$ is a penalty parameter (e.g., see \cite{Bert99,Rus06}). 
Roughly speaking, an augmented Lagrangian method, when applied to problem \eqnok{nlp}, 
solves a sequence of subproblems in the form of 
\[
\min\limits_{x\in X} L_{\vrho}(x,\lambda,\mu)
\]  
while updating the Lagrangian multipliers $(\lambda, \mu)$ and the penalty parameter 
$\vrho$. 

Let $x^{\rm feas}$ be a known feasible point of \eqnok{nlp} (see Assumption \ref{assum-aug}). 
We now describe the algorithm framework of a novel augmented Lagrangian method as follows.

\gap
 
\noindent
{\bf Algorithm framework of augmented Lagrangian method:} \\ [5pt]
Let $\{\epsilon_k\}$ be a positive decreasing sequence. Let $\lambda^0\in\Re^m_+$, 
$\mu^0\in\Re^p$, $\vrho_0 >0$, $\tau>0$, $\sigma > 1$ be given. Choose an arbitrary 
initial point $x^0_\init\in X$ and constant $\Upsilon \ge \max\{f(x^{\rm feas}), L_{\vrho_0}
(x^0_\init,\lambda^0,\mu^0)\}$. Set $k=0$. 
\begin{itemize}
\item[1)]
Find an approximate solution $x^k\in X$ for the subproblem 
\beq \label{inner-prob}
\min\limits_{x\in X} L_{\vrho_k}(x,\lambda^k,\mu^k) 
\eeq
such that 
\beq \label{inner-cond}
\dist\left(-\nabla \w(x^k), \partial \q(x^k) + N_X(x^k)\right) \le \epsilon_k, 
\ \ \ \ L_{\vrho_k}(x^k, \lambda^k, \mu^k) \le \Upsilon. 
\eeq
\item[2)]
Update Lagrange multipliers according to
\beq \label{lag-update}
\lambda^{k+1} := [\lambda^k + \vrho_k g(x^k)]^+, \ \ \ \
\mu^{k+1} := \mu^k + \vrho_k h(x^k). 
\eeq
\item[3)]
Set $\vrho_{k+1} := \max\left\{\sigma\vrho_k, \|\lambda^{k+1}\|^{1+\tau}, 
\|\mu^{k+1}\|^{1+\tau}\right\}$. 
\item[4)]
Set $k \leftarrow k+1$ and go to step 1). 
\end{itemize}
\noindent
{\bf end}

\vgap 

The above augmented Lagrangian method differs from the classical augmented 
Lagrangian method in that: i) the values of the augmented Lagrangian functions 
at their approximate minimizers given by the method are bounded from above 
(see Step 1)); and ii) the magnitude of penalty parameters outgrows that of 
Lagrangian multipliers (see Step 2)).

In addition, to make the above augmented Lagrangian method complete, we 
need address how to find an approximate solution $x^k\in X$ for subproblem 
\eqnok{inner-prob} satisfying \eqnok{inner-cond} as required in Step 1). We 
will leave this discussion to the end of this subsection. For the time being, 
we establish the main convergence result regarding this method for solving 
problem \eqnok{nlp}. 

\begin{theorem} \label{main-thm}
Assume that $\epsilon_k \to 0$. Let $\{x^k\}$ be the sequence generated by 
the above augmented Lagrangian method satisfying \eqnok{inner-cond}. Suppose 
that a subsequence $\{x^k\}_{k \in K}$ converges to $x^*$. Then, the following 
statements hold:
\bi
\item[(a)] 
$x^*$ is a feasible point of problem \eqnok{nlp};
\item[(b)]
Further, if Robinson's condition \eqnok{rob-cond} is satisfied at $x^*$, then 
the subsequence $\{(\lambda^{k+1},\mu^{k+1})\}_{k\in K}$ is bounded, and 
each accumulation point $(\lambda^*,\mu^*)$ of $\{(\lambda^{k+1},\mu^{k+1})\}_{k \in K}$  
is the vector of Lagrange multipliers satisfying the first-order optimality 
conditions \eqnok{1st-opt}-\eqnok{complement} at $x^*$.  
\ei
\end{theorem}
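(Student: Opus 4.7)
The plan is to exploit the two distinctive features of this augmented Lagrangian scheme: the uniform upper bound $L_{\vrho_k}(x^k,\lambda^k,\mu^k)\le\Upsilon$ from \eqnok{inner-cond}, which will drive feasibility, and the update rule $\vrho_{k+1}\ge\max\{\|\lambda^{k+1}\|^{1+\tau},\|\mu^{k+1}\|^{1+\tau}\}$, which forces $\|\lambda^k\|/\vrho_k,\|\mu^k\|/\vrho_k\to 0$ even though $\vrho_k\to\infty$ and the multipliers themselves may grow. Robinson's condition will then play the same role as in Theorem~\ref{opt-thm} to control the limit multipliers.

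For part~(a), I would complete the square on the equality-constraint term to rewrite
\[
L_{\vrho}(x,\lambda,\mu) = f(x)+\q(x)+\frac{1}{2\vrho}\bigl(\|[\lambda+\vrho g(x)]^+\|^2 + \|\mu+\vrho h(x)\|^2 - \|\lambda\|^2 - \|\mu\|^2\bigr),
\]
divide $L_{\vrho_k}(x^k,\lambda^k,\mu^k)\le\Upsilon$ by $\vrho_k^2$, and let $k\to\infty$ in $K$. Since $\vrho_k\ge\sigma^k\vrho_0\to\infty$, $f(x^k)+\q(x^k)$ is bounded by continuity, and $\|\lambda^k\|/\vrho_k,\|\mu^k\|/\vrho_k\to 0$ by the update rule, the right-hand side vanishes, so continuity of $g,h$ gives $[g(x^*)]^+=0$ and $h(x^*)=0$. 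With $x^*\in X$ by closedness of $X$, this proves feasibility.

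For part~(b), the key reformulation is that by \eqnok{lag-update},
\[
\nabla \w(x^k) = \nabla f(x^k) + \sum_{i=1}^m\lambda^{k+1}_i\nabla g_i(x^k) + \sum_{i=1}^p\mu^{k+1}_i\nabla h_i(x^k),
\]
so the distance condition in \eqnok{inner-cond} produces $z^k\in\partial\q(x^k)$ and $v^k\in N_X(x^k)$ with $\|\nabla \w(x^k)+z^k+v^k\|\le\epsilon_k$. The step I expect to be hardest is showing that $t_k:=\|(\lambda^{k+1},\mu^{k+1})\|$ is bounded along $K$, and my approach is the contradiction from the end of Theorem~\ref{opt-thm}: assume $t_k\to\infty$ along a subsequence, divide the stationarity estimate by $t_k$, and use boundedness of $\nabla f(x^k)$ and of $z^k\in\partial\q(x^k)$ (local boundedness of $\partial\q$ near $x^*$) to obtain first the boundedness and then, by extracting a subsequence, the convergence of $\bar v^k:=v^k/t_k$ to some $\bar v\in N_X(x^*)$ (using that the normal-cone map to a convex closed set has closed graph). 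For $i\notin\cA(x^*)$, the identity $\lambda^{k+1}_i=[\lambda^k_i+\vrho_k g_i(x^k)]^+$ together with $\lambda^k_i\le\vrho_k^{1/(1+\tau)}$ and $g_i(x^k)\to g_i(x^*)<0$ forces $\lambda^{k+1}_i=0$ for all large $k$, whence the limit $\bar\lambda_i=0$ of the normalized multipliers. The limiting identity
\[
0 = \sum_{i=1}^m\bar\lambda_i\nabla g_i(x^*)+\sum_{i=1}^p\bar\mu_i\nabla h_i(x^*)+\bar v,
\]
with $\|(\bar\lambda,\bar\mu)\|=1$, $\bar v\in N_X(x^*)$, $\bar\lambda\ge 0$, and $\bar\lambda$ supported on $\cA(x^*)$, is exactly the setup at the end of the proof of Theorem~\ref{opt-thm}; Robinson's condition (available at $x^*$ because part~(a) gives feasibility) then supplies a direction $d$ yielding $\|(\bar\lambda,\bar\mu)\|^2\le 0$, the desired contradiction.

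Finally, once $\{(\lambda^{k+1},\mu^{k+1})\}_{k\in K}$ is known to be bounded, any accumulation point $(\lambda^*,\mu^*)$ has $\lambda^*\ge 0$ and, by passing to the limit in the stationarity estimate with $\epsilon_k\to 0$ and invoking closedness of $\partial\q(x^*)+N_X(x^*)$, satisfies \eqnok{1st-opt}. Complementary slackness \eqnok{complement} then follows because $g_i(x^*)\le 0$ by part~(a) and $\lambda^*_i=0$ whenever $g_i(x^*)<0$, by the same ``eventual zero'' observation used above.
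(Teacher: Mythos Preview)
Your proposal is correct and follows essentially the same route as the paper's proof: the uniform bound $L_{\vrho_k}(x^k,\lambda^k,\mu^k)\le\Upsilon$ combined with $\|\lambda^k\|/\vrho_k,\|\mu^k\|/\vrho_k\to 0$ gives feasibility, and the normalized-multiplier contradiction argument (identical to the tail of Theorem~\ref{opt-thm}) gives boundedness and hence the KKT conditions. The only cosmetic differences are your completing-the-square rewriting of $L_\vrho$ in part~(a), and that in the final limit passage you should invoke outer semicontinuity (closed graph) of $x\mapsto\partial\q(x)$ and $x\mapsto N_X(x)$ rather than mere closedness of $\partial\q(x^*)+N_X(x^*)$, which is exactly what the paper does.
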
 

\begin{proof} 
In view of \eqnok{aug}, \eqnok{wx} and the second relation in \eqnok{inner-cond}, 
we have 
\[
 f(x^k) + \q(x^k) + \frac{1}{2\vrho_k}(\|[\lambda^k+\vrho_k g(x^k)]^+\|^2-\|\lambda^k\|^2) 
+ (\mu^k)^T h(x^k) + \frac{\vrho_k}{2} \|h(x^k)\|^2 \le \Upsilon \ \ \ \forall k.
\] 
It follows that 
\[
\|[\lambda^k/\vrho_k+g(x^k)]^+\|^2 +  \|h(x^k)\|^2 \le 
2[\Upsilon -f(x^k)-g(x^k)-(\mu^k)^T h(x^k)]/\vrho_k +(\|\lambda_k\|/\vrho_k)^2.
\]
Noticing that $\vrho_0 >0$ $\tau>0$, and $\vrho_{k+1} = \max\left\{\sigma\vrho_k, 
\|\lambda^{k+1}\|^{1+\tau}, \|\mu^{k+1}\|^{1+\tau}\right\}$ for $k\ge 0$, we 
can observe that $\vrho_k \to \infty$ and $\|(\lambda^k, \mu^k)\|/\vrho_k \to 0$. 
We also know that $\{x^k\}_{k \in K} \to x^*$, $\{g(x^k)\}_{k \in K} \to g(x^*)$ and 
$\{h(x^k)\}_{k \in K} \to h(x^*)$. Using these results, and upon taking limits as 
$k \in K \to \infty$ on both sides of the above inequality, we obtain that 
\[
\|[g(x^*)]^+\|^2 + \|h(x^*)\|^2 \le 0,
\] 
which implies that $g(x^*) \le 0$ and $h(x^*)=0$. We also know that $x^* \in X$. 
It thus follows that statement (a) holds. 

We next show that statement (b) also holds. Using \eqnok{inner-prob}, \eqnok{aug}, 
\eqnok{wx}, \eqnok{lag-update}, and the first relation in \eqnok{inner-cond}, we 
have 
\beq \label{inner-term}
\|\nabla f(x^k) + (\lambda^{k+1})^T \nabla g(x^k) + (\mu^{k+1})^T \nabla h(x^k) 
+ z^k + v^k \| \le \epsilon_k        
\eeq
for some $z^k \in \partial \q(x^k)$ and $v^k \in N_X(x^k)$. Suppose for 
contradiction that the subsequence $\{(\lambda^{k+1},\mu^{k+1})\}_{k\in K}$ 
is unbounded. By passing to a subsequence if necessary, we can assume that 
$\{(\lambda^{k+1},\mu^{k+1})\}_{k\in K} \to \infty$. Let $(\bar \lambda^{k+1}, 
\bar \mu^{k+1}) = (\lambda^{k+1}, \mu^{k+1})/{\|(\lambda^{k+1}, \mu^{k+1})\|}$ 
and $\bar v^k = v^k/{\|(\lambda^{k+1},\mu^{k+1})\|}$. Recall that $\{x^k\}_{k\in K} 
\to x^*$. It together with Theorem 6.2.7 of \cite{HiLe93-1} implies that 
$\cup_{k\in K} \partial \q(x^k)$ is bounded, and so is $\{z^k\}_{k\in K}$. 
In addition, $\{g(x^k)\}_{k \in K} \to g(x^*)$ and $\{h(x^k)\}_{k \in K} \to h(x^*)$.
Then, we can observe from \eqnok{inner-term} that $\{\bar v^k\}_{k\in K}$ is bounded. 
Without loss of generality, assume that $\{(\bar \lambda^{k+1}, \bar \mu^{k+1})\}_{k\in K} 
\to (\bar\lambda, \bar\mu)$ and $\{\bar v^k\}_{k\in K} \to \bar v$ (otherwise, one can 
consider their convergent subsequences). Clearly, $\|(\bar\lambda,\bar\mu)\|=1$. Dividing 
both sides of \eqnok{inner-term} 
by $\|(\lambda^{k+1}, \mu^{k+1})\|$ and taking limits as $k\in k \to \infty$, we 
obtain that 
\beq \label{lim-eq}
\bar \lambda^T \nabla g(x^*) + \bar\mu^T \nabla h(x^*) + \bar v = 0. 
\eeq     
Further, using the identity $\lambda^{k+1} = [\lambda^k + \vrho_k g(x^k)]^+$ and 
the fact that $\vrho_k \to \infty$ and $\|\lambda^k\|/\vrho_k \to 0$, we observe 
that $\lambda^{k+1}\in\Re^m_+$ and $\lambda^{k+1}_i = 0$ for $i\notin\cA(x^*)$ 
when $k\in K$ is sufficiently large, which imply that $\bar\lambda\in\Re^m_+$ and 
$\bar\lambda_i=0$ for $i\notin\cA(x^*)$. Moreover, we have $\bar v\in N_X(x^*)$ 
since $N_X(x^*)$ is a closed cone.  Using these results, \eqnok{lim-eq}, Robinson's 
condition \eqnok{rob-cond} at $x^*$, and a similar argument as that in 
the proof of Theorem \ref{opt-thm}, we can obtain that $(\bar\lambda,\bar\mu)=(0,0)$, 
which contradicts the identity $\|(\bar\lambda,\bar\mu)\|=1$. Therefore, the subsequence 
$\{(\lambda^{k+1},\mu^{k+1})\}_{k\in K}$ is bounded. Using this result together with 
\eqnok{inner-term} and the fact $\{z^k\}_{k\in K}$ is bounded, we immediately 
see that $\{v^k\}_{k\in K}$ is bounded. Using semicontinuity of $\partial 
\q(\cdot)$ and $N_X(\cdot)$ (see Theorem $24.4$ of \cite{Rock70} and Lemma $2.42$ 
of \cite{Rus06}), and the fact $\{x^k\}_{k\in K} \to x^*$, we conclude 
that every accumulation point of $\{z^k\}_{k\in K}$ and $\{v^k\}_{k\in K}$ belongs to 
$\partial \q(x^*)$ and $N_X(x^*)$, respectively. Using these results and 
\eqnok{inner-term}, we further see that for every accumulation point $(\lambda^*,\mu^*)$ 
of $\{(\lambda^{k+1},\mu^{k+1})\}_{k \in K}$, there exists some $z^*\in\partial \q(x^*)$ 
and $v^* \in N_X(x^*)$ such that 
\[
\nabla f(x^*) + (\lambda^*)^T \nabla g(x^*) + (\mu^*)^T \nabla h(x^*) 
+ z^* + v^* = 0.
\]   
Moreover, using the identity $\lambda^{k+1} = [\lambda^k + \vrho_k g(x^k)]^+$ and 
the fact that $\vrho_k \to \infty$ and $\|\lambda^k\|/\vrho_k \to 0$, we easily 
see that $\lambda^*\in\Re^m_+$ and $\lambda^*_i=0$ for $i\notin\cA(x^*)$. 
Thus, $(\lambda^*,\mu^*)$ satisfies the first-order optimality 
conditions \eqnok{1st-opt}-\eqnok{complement} at $x^*$. 
\end{proof}

\vgap

Before ending this subsection, we now briefly discuss how to find an approximate 
solution $x^k\in X$ for subproblem \eqnok{inner-prob} satisfying \eqnok{inner-cond} 
as required in Step 1) of the above augmented Lagrangian method. In particular, we 
are interested in applying the nonmonotone gradient methods proposed in Subsection 
\ref{grad-method} to \eqnok{inner-prob}. As shown in Subsection \ref{grad-method} 
(see Theorems \ref{converg-1} and \ref{converg-2}), these methods are able to find 
an approximate solution $x^k\in X$ satisfying the first relation of \eqnok{inner-cond}. 
Moreover, if an initial point for these methods is properly chosen, the obtained 
approximate solution $x^k$ also satisfies the second relation of \eqnok{inner-cond}. 
For example, given $k \ge 0$, let $x^k_\init\in X$ denote the initial point for 
solving the $k$th subproblem \eqnok{inner-prob}, and we define $x^k_\init$ for 
$k \ge 1$ as follows
\[
x^k_\init = \left\{
\ba{ll}
x^{\rm feas}, \ \ \ \mbox{if} \ L_{\vrho_k} (x^{k-1},\lambda^k,\mu^k) > \Upsilon; \\
x^{k-1},  \ \ \ \mbox{otherwise},   
\ea \right.
\]
where $x^{k-1}$ is the approximate solution to the $(k-1)$th subproblem \eqnok{inner-prob} 
satisfying \eqnok{inner-cond} (with $k$ replaced by $k-1$).  Recall from Assumption 
\ref{assum-aug} that $x^{\rm feas}$ is a feasible solution of \eqnok{nlp}. Thus, 
$g(x^{\rm feas}) \le 0$, and $h(x^{\rm feas})=0$, which together with \eqnok{aug}, 
\eqnok{wx} and the definition of $\Upsilon$ implies that 
\[
L_{\vrho_k} (x^{\rm feas},\lambda^k,\mu^k) \ \le \  f(x^{\rm feas}) \ \le \ \Upsilon.
\]
It follows from this inequality and the above choice of $x^k_\init$ that 
$L_{\vrho_k} (x^k_\init,\lambda^k,\mu^k) \le \Upsilon$. Additionally, the nonmonotone 
gradient methods proposed in Subsection \ref{grad-method} possess a natural property 
that the objective function values at all subsequent iterates are bounded below by the 
one at the initial point. Therefore, we have
\[
L_{\vrho_k} (x^k,\lambda^k,\mu^k) \ \le  \ L_{\vrho_k} (x^k_\init,\lambda^k,\mu^k) 
\ \le \ \Upsilon,  
\]   
and so the second relation of \eqnok{inner-cond} is satisfied at $x^k$.

\subsection{Nonmonotone gradient methods for nonsmooth minimization}
\label{grad-method}

In this subsection we propose two nonmonotone gradient methods for minimizing 
a class of nonsmooth functions over a closed convex set, which can be suitably 
applied to the subproblems arising in our augmented Lagrangian method detailed 
in Subsection \ref{aug-method}. We also establish global convergence and 
local linear rate of convergence for these methods. It shall be mentioned that 
these two methods are closely related to the ones proposed in \cite{WrNoFi09} 
and \cite{TseYun09}, but they are not the same (see the remarks below for details). 
In addition, these methods can be viewed as an extension of the well-known 
projected gradient methods studied in \cite{BiMaRa00} for smooth problems, 
but the methods proposed in \cite{WrNoFi09} and \cite{TseYun09} cannot.
       
Throughout this subsection, we consider the following problem
\beq \label{nonsmooth}
\min_{x\in X} \{F(x) := f(x) + \q(x) \}, 
\eeq
where $f: \Re^n \to \Re$ is continuously differentiable, $\q: \Re^n \to \Re$ 
is convex but not necessarily smooth, and $X \subseteq \Re^n$ 
is closed and convex.

We say that $x\in \Re^n$ is a {\it stationary point} of problem \eqnok{nonsmooth} 
if $x\in X$ and 
\beq \label{station}
0 \in \nabla f(x) + \partial \q(x) + N_X(x).
\eeq 

Given a point $x\in \Re^n$ and $H \succ 0$, we denote by $d_H(x)$  the solution of the 
following problem:
\beq \label{dir}
d_H(x) := \arg\min_d \left\{\nabla f(x)^T d + \frac{1}{2} 
d^T H d + \q(x+d): \ x+d \in X \right\}. 
\eeq 

The following lemma provides an alternative characterization of stationarity that 
will be used in our subsequent analysis.

\begin{lemma} \label{opt-char}
For any $H \succ 0$, $x \in X$ is a stationary point of problem 
\eqnok{nonsmooth} if and only if $d_{H}(x)=0$.
\end{lemma}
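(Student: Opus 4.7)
The plan is to prove both directions by applying the first-order optimality conditions to the convex subproblem that defines $d_H(x)$, and matching them against the stationarity condition \eqref{station}.

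First I would observe that since $H \succ 0$ and $P$ is convex, the objective in \eqref{dir} is strictly convex in $d$, and the feasible set $\{d : x+d \in X\}$ is closed and convex. Hence $d_H(x)$ is well defined (unique). Writing down the first-order optimality conditions for the subproblem \eqref{dir} at its minimizer $d^* = d_H(x)$, I get
\[
0 \in \nabla f(x) + H d^* + \partial P(x+d^*) + N_X(x+d^*),
\]
using the standard calculus of subgradients for convex functions together with the normal cone to $X$ at the feasible point $x+d^*$ (this is exactly Theorem \ref{opt-thm} specialized to a convex problem with no functional constraints, or a direct application of standard convex optimality).

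For the ``if'' direction, assume $d_H(x) = 0$. Substituting $d^* = 0$ into the above inclusion yields $0 \in \nabla f(x) + \partial P(x) + N_X(x)$, which is exactly the stationarity condition \eqref{station}; and $x \in X$ since $x + d^* = x$ must be feasible. For the ``only if'' direction, assume $x \in X$ is stationary, so \eqref{station} holds. Then the candidate $d = 0$ is feasible (as $x \in X$) and satisfies $0 \in \nabla f(x) + H \cdot 0 + \partial P(x) + N_X(x)$, so $d = 0$ fulfils the first-order optimality conditions of the convex problem \eqref{dir}. By strict convexity of the objective in \eqref{dir}, these conditions are sufficient and the minimizer is unique, so $d_H(x) = 0$.

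I do not expect a real obstacle here; the only technical point worth stating carefully is the justification of the subdifferential sum rule used to derive the optimality conditions of \eqref{dir}, which is immediate because $P$ is convex and finite-valued on $\Re^n$ (so its subdifferential is nonempty everywhere) and the indicator of $X$ is a proper closed convex function with $X \cap \operatorname{dom} P = X \neq \emptyset$, giving the required constraint qualification for additivity of subdifferentials. After noting this, the two implications above close the proof.
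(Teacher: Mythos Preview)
Your proposal is correct and follows exactly the approach of the paper: the paper's proof simply notes that \eqref{dir} is a strictly convex problem and that the conclusion follows from its first-order optimality conditions, which is precisely what you spell out in detail.
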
 

\begin{proof}
We first observe that \eqnok{dir} is a convex problem, and moreover its 
objective function is strictly convex. The conclusion of this lemma immediately 
follows from this observation and the first-order optimality condition of \eqnok{dir}.   
\end{proof}

\vgap

The next lemma shows that $\|d_{H}(x)\|$ changes not too fast with $H$. It will be 
used to prove Theorems \ref{lconverg1} and \ref{lconverg2}.

\begin{lemma} \label{dchg-H}
For any $x\in\Re^n$, $H \succ 0$, and $\tilde H \succ 0$, let $d=d_{H}(x)$ 
and $\td=d_{\tH}(x)$. Then 
\beq
\|\td\| \le \frac{1+\lmax(Q)+\sqrt{1-2\lmin(Q)+\lmax(Q)^2}}{2\lmin(\tH)}\lmax(H)\|d\|,
\eeq
where $Q=H^{-1/2}\tH H^{-1/2}$. 
\end{lemma}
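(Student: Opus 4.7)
The plan is to exploit the variational characterization of $d$ and $\td$, reduce via monotonicity of convex subdifferentials to a single scalar inequality, and then solve a quadratic in $\|\td\|/\|d\|$.

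First I would write down first-order optimality conditions for the strongly convex program \eqnok{dir} in both variants. These yield $z \in \partial \q(x+d)$, $\tilde z \in \partial \q(x+\td)$, $v \in N_X(x+d)$, $\tilde v \in N_X(x+\td)$ with
\begin{equation*}
\nabla f(x) + Hd + z + v = 0 \qquad \text{and} \qquad \nabla f(x) + \tH \td + \tilde z + \tilde v = 0.
\end{equation*}
Subtracting these and taking the inner product with $d - \td$, and invoking monotonicity of the convex subdifferential $\partial \q$ and of the normal-cone map $N_X$ (so that $(z - \tilde z)^T(d-\td) \ge 0$ and $(v - \tilde v)^T(d-\td) \ge 0$), I arrive at the key inequality
\begin{equation*}
(Hd - \tH \td)^T (d - \td) \le 0.
\end{equation*}

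Next I would expand the left-hand side using symmetry of $H$ and $\tH$ to rewrite it as $\td^T\tH\td + d^T H d \le \td^T(H+\tH)d$. To extract a scalar quadratic inequality, I would substitute $u = H^{1/2} d$ and $v = H^{1/2}\td$; since $\tH = H^{1/2} Q H^{1/2}$, the inequality becomes
\begin{equation*}
\|u\|^2 + v^T Q v \le v^T (I + Q) u.
\end{equation*}
From here I would bound the right-hand side by Cauchy--Schwarz (so that the cross term becomes $\lmax(\tH + H)\|\td\|\|d\| \le (1+\lmax(Q))\lmax(H)\|\td\|\|d\|$) and bound the two diagonal terms from below by $\lmin(\tH)\|\td\|^2$ and $\lmin(H)\|d\|^2$ respectively. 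This produces a quadratic inequality in $t = \|\td\|/\|d\|$ of the form $\lmin(\tH)\, t^2 - (1+\lmax(Q))\lmax(H)\, t + (\text{positive})\le 0$, whose larger root gives exactly a bound of the shape $\|\td\| \le \bigl[(1+\lmax(Q))\lmax(H) + \lmax(H)\sqrt{\cdot}\bigr]/(2\lmin(\tH)) \cdot \|d\|$.

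The main obstacle is matching the precise discriminant $1 - 2\lmin(Q) + \lmax(Q)^2$ claimed in the lemma. The coefficients in the quadratic must be assembled so that after completing the square the term under the radical simplifies to this expression; this requires choosing the Cauchy--Schwarz split and the spectral estimates consistently (in particular, deciding at the right moment whether to pass from $\td^T\tH\td$ to $\lmin(\tH)\|\td\|^2$ directly, or first to $v^T Q v \ge \lmin(Q)\|v\|^2$ and then convert through the substitution). The conceptual structure is straightforward monotonicity plus a quadratic solve; the delicate part is the algebraic bookkeeping so that the discriminant takes the exact stated form rather than a merely equivalent-looking one.
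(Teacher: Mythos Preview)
Your overall strategy---derive the monotonicity inequality $(Hd-\tH\td)^T(d-\td)\le 0$ from the optimality conditions, pass to a scalar quadratic, and read off the larger root---is exactly the route taken in Tseng and Yun's Lemma~3.2, which is all the paper invokes here. So conceptually you are reproducing the paper's (delegated) proof rather than proposing something different.

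However, the particular spectral estimates you write down do \emph{not} recover the stated discriminant, and the gap is genuine rather than cosmetic. With your choices
\[
\td^T\tH\td\ge\lmin(\tH)\|\td\|^2,\qquad d^THd\ge\lmin(H)\|d\|^2,\qquad \td^T(H+\tH)d\le(1+\lmax(Q))\lmax(H)\|\td\|\|d\|,
\]
the quadratic in $t=\|\td\|/\|d\|$ becomes $\lmin(\tH)\,t^2-(1+\lmax(Q))\lmax(H)\,t+\lmin(H)\le 0$, whose discriminant is $(1+\lmax(Q))^2\lmax(H)^2-4\lmin(\tH)\lmin(H)$. Take $H=I$, so $Q=\tH$ and $\lmax(H)=\lmin(H)=1$: your discriminant is $(1+\lmax(\tH))^2-4\lmin(\tH)=1+2\lmax(\tH)+\lmax(\tH)^2-4\lmin(\tH)$, whereas the lemma claims $1-2\lmin(\tH)+\lmax(\tH)^2$. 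These differ by $2(\lmax(\tH)-\lmin(\tH))\ge 0$, with strict inequality unless $\tH$ is a multiple of the identity. So your bound is strictly weaker in general, and the ``delicate algebraic bookkeeping'' you flag is not a matter of rearrangement: the crude Cauchy--Schwarz/eigenvalue replacements you list cannot be tightened after the fact to the stated constant. The sharper discriminant in Tseng--Yun comes from combining the first-order (monotone) inequality with the \emph{variational} optimality of $d$ and $\td$ (comparing objective values, using the strong convexity of the $H$- and $\tH$-subproblems) before passing to spectral bounds; that extra ingredient is what supplies the $-2\lmin(Q)$ term. As written, your argument proves a correct but weaker inequality, not the lemma as stated. (For the uses in Theorems~\ref{lconverg1} and~\ref{lconverg2}, only $\|d_I(x)\|=\Theta(\|d_H(x)\|)$ is needed, so your weaker constant would suffice there; but to establish the lemma itself you must either sharpen the derivation or, as the paper does, appeal to \cite[Lemma~3.2]{TseYun09}.)
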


\begin{proof}
The conclusion immediately follows from Lemma 3.2 of \cite{TseYun09} with 
$J =\{1,\ldots,n\}$, $c=1$, and $P(x) := P(x)+I_X(x)$, where $I_X$ is the 
indicator function of $X$.   
\end{proof}

\vgap

The following lemma will be used to prove Theorems \ref{lconverg1} and 
\ref{lconverg2}. 

\begin{lemma} \label{step-bdd}
Given $x\in\Re^n$ and $H \succ 0$, let $g=\nabla f(x)$ and 
$\Delta_d = g^Td+\q(x+d)-\q(x)$ for all $d\in\Re^n$. Let $\sigma\in (0,1)$ 
be given. The following statements hold:
\bi
\item[(a)] If $d=d_H(x)$, then
\[
-\Delta_d \ \ge \ d^T H d \ \ge \ \lmin(H) \|d\|^2. 
\] 
\item[(b)]
For any $\bar x \in \Re^n$, $\alpha\in(0,1]$, $d=d_{H}(x)$, and 
$x'=x+\alpha d$, then 
\[
(g+Hd)^T(x'-\bar x) + \q(x') - \q(\bar x) \le (\alpha-1)(d^THd+\Delta_d).
\] 
\item[(c)]
If $f$ satisfies 
\beq \label{lip-cond}
\|\nabla f(y) - \nabla f(z)\| \le L \|y-z\| \ \ \ \forall y, z \in \Re^n 
\eeq
for some $L>0$, then the descent condition 
\[
F(x+\alpha d) \le  F(x) + \sigma \alpha \Delta_d
\]
is satisfied for $d=d_{H}(x)$, provided $0 \le \alpha \le 
\min\{1,2(1-\sigma)\lmin(H)/L\}$. 
\item[(d)] 
If $f$ satisfies \eqnok{lip-cond}, then the descent condition 
\[
F(x+d) \le F(x) + \sigma \Delta_d 
\]  
is satisfied for $d=d_{H(\alpha)}(x)$, where $H(\alpha)=\alpha H$, provided 
$\alpha \ge L/[2(1-\sigma)\lmin(H)]$. 
\ei 
\end{lemma}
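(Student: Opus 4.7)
The unifying tool across all four parts is the first-order optimality condition for the strongly convex subproblem \eqnok{dir} that defines $d=d_H(x)$: there exist $s\in\partial\q(x+d)$ and $n\in N_X(x+d)$ with $g+Hd+s+n=0$. For part (a), I take the inner product of this identity with $d$, and then use the subgradient inequality $s^Td\ge \q(x+d)-\q(x)$ together with the normal-cone inequality $n^Td=n^T((x+d)-x)\ge 0$ (the latter coming from $x\in X$ and $n\in N_X(x+d)$). Adding the two bounds into $g^Td=-d^THd-s^Td-n^Td$ immediately yields $\Delta_d\le -d^THd$, and the bound $d^THd\ge\lmin(H)\|d\|^2$ finishes (a).

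For part (b), my plan is to write $x'=(1-\alpha)x+\alpha(x+d)$ and exploit convexity of $\q$ through $\q(x')\le(1-\alpha)\q(x)+\alpha\q(x+d)$. Letting $A$ and $B$ denote the quantity $(g+Hd)^T(\cdot-\bar x)+\q(\cdot)-\q(\bar x)$ evaluated at $x$ and $x+d$ respectively, this convex bound gives $\text{LHS}\le(1-\alpha)A+\alpha B=A+\alpha(B-A)$, and a direct computation shows $B-A=\Delta_d+d^THd$, so the target inequality collapses to
\[
B\;:=\;(g+Hd)^T(x+d-\bar x)+\q(x+d)-\q(\bar x)\;\le\;0.
\]
Substituting $g+Hd=-s-n$ into $B$ and invoking the subgradient inequality $\q(\bar x)\ge \q(x+d)+s^T(\bar x-x-d)$ kills the $s$-contribution and leaves $n^T(\bar x-(x+d))\le 0$, which holds because $n\in N_X(x+d)$ and (in the intended application) $\bar x\in X$. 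This sign-tracking step is the main place where care is needed; once $B\le 0$ is isolated, the rest of (b) is bookkeeping.

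Parts (c) and (d) both ride on the standard descent lemma $f(y)\le f(z)+\nabla f(z)^T(y-z)+\tfrac{L}{2}\|y-z\|^2$ furnished by \eqnok{lip-cond}. For (c), I apply this at $y=x+\alpha d$, $z=x$ and add $\q(x+\alpha d)\le(1-\alpha)\q(x)+\alpha\q(x+d)$ (valid for $\alpha\in[0,1]$) to obtain $F(x+\alpha d)\le F(x)+\alpha\Delta_d+\tfrac{L\alpha^2}{2}\|d\|^2$; the required descent becomes $(1-\sigma)(-\Delta_d)\ge\tfrac{L\alpha}{2}\|d\|^2$, which by part (a) is implied by $\alpha\le 2(1-\sigma)\lmin(H)/L$, giving the stated step cap. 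For (d), set $\td=d_{\alpha H}(x)$, apply part (a) with $H$ replaced by $\alpha H$ to get $-\Delta_{\td}\ge\alpha\lmin(H)\|\td\|^2$, and use the descent lemma at full unit step to obtain $F(x+\td)\le F(x)+\Delta_{\td}+\tfrac{L}{2}\|\td\|^2$; the required descent then reduces to $(1-\sigma)\alpha\lmin(H)\ge L/2$, i.e., the stated lower bound $\alpha\ge L/[2(1-\sigma)\lmin(H)]$.
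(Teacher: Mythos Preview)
Your proposal is correct. The paper does not prove (a)--(c) from scratch but simply cites Theorem~4.1(a) and Lemma~3.4 of Tseng--Yun~\cite{TseYun09} (specialized with $J=\{1,\ldots,n\}$, $\gamma=0$, $\underline\lambda=\lmin(H)$), and then proves (d) directly via the descent lemma and part~(a), exactly as you do. So your argument for (d) matches the paper's line-for-line, while for (a)--(c) you have written out, in a self-contained way, the standard optimality-condition / convexity / descent-lemma arguments that lie behind the Tseng--Yun results being quoted.

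One remark worth keeping: your observation in (b) that the normal-cone step $n^T(\bar x-(x+d))\le 0$ requires $\bar x\in X$ is a genuine point. The lemma as stated says ``for any $\bar x\in\Re^n$'', but the inequality is only guaranteed when $\bar x\in X$ (or when $X=\Re^n$ so that $N_X\equiv\{0\}$). In the paper's applications (Theorems~\ref{lconverg1} and~\ref{lconverg2}) one always takes $\bar x=\bar x^{l(k)-1}\in\bar X\subseteq X$, so the hypothesis is met where it matters; your flag is a correct reading of the underlying requirement.
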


\begin{proof} 
The statements (a)-(c) follow from Theorem 4.1 (a) and Lemma 3.4 of \cite{TseYun09} 
with $J=\{1,\ldots,n\}$, $\gamma=0$, and $\underline\lambda=\lmin(H)$. We now 
prove statement (d). Let $d=d_{H(\alpha)}(x)$, where $H(\alpha)=\alpha H$ 
for $\alpha >0$. It follows from statement (a) that 
$\|d\|^2 \le -\Delta_d/(\alpha\lmin(H))$. Using this relation, \eqnok{lip-cond}, 
and the definitions of $F$ and $\Delta_d$, we have 
\beqas
F(x+d) - F(x) &=& f(x+d)-f(x)+\q(x+d)-\q(x) \\
&=& \nabla f(x)^Td + \q(x+d)-\q(x) + \int^1_0 d^T(\nabla f(x+td)-\nabla f(x)) dt 
\\ [4pt]
& \le & \Delta_d + \int^1_0 \|\nabla f(x+td)-\nabla f(x)\|\|d\| dt \\
& \le & \Delta_d + L\|d\|^2/2 \ \le \ \left[1-L/(2\alpha\lmin(H))\right]\Delta_d,
\eeqas
which together with $\alpha \ge L/[2(1-\sigma)\lmin(H)]$, immediately implies that 
statement (d) holds.  
\end{proof}

\vgap

We now present the first nonmonotone gradient method for \eqnok{nonsmooth} as follows.

\gap 

\noindent
\begin{minipage}[h]{6.6 in}
{\bf Nonmonotone gradient method I:} \\ [5pt]
Choose parameters $\eta >1$, $0<\sigma < 1$, $0 < \underline\alpha < \bar\alpha$, 
$0 < \underline\lambda \le \bar\lambda$, and integer $M \ge 0$. Set $k=0$ and 
choose $x^0\in X$. 
\begin{itemize}
\item[1)] Choose $\alpha_k^0 \in [\underline\alpha, \bar\alpha]$ and 
$\underline\lambda I \preceq H_k \preceq \bar\lambda I$. 
\item[2)] For $j=0,1, \ldots$
  \bi
    \item[2a)] Let $\alpha_k = \alpha_k^0 \eta^j$. Solve \eqnok{dir} with $x=x^k$ 
               and $H=\alpha_kH_k$ to obtain $d^k = d_{H}(x)$.  
    \item[2b)] If $d^k$ satisfies 
          \beq \label{reduct}
           F(x^k + d^k) \ \le \ \max_{[k-M]^+ \le i \le k} F(x^i) + \sigma \Delta_k, 
          \eeq  
          go to step 3), where 
          \beq \label{delta}
          \Delta_k := \nabla f(x^k)^T d^k + \q(x^k+d^k) - \q(x^k).
          \eeq
  \ei
\item[3)] Set $x^{k+1} = x^k + d^k$ and $k \leftarrow k+1$. 
\end{itemize}
\noindent
{\bf end}
\end{minipage}

\gap

\begin{remark}
The above method is closely related to the one proposed in \cite{WrNoFi09}. They differ 
from each other only in that the distinct $\Delta_k$'s are used in \eqnok{reduct}. Indeed, 
the latter method uses $\Delta_k = -\alpha_k \|d^k\|^2/2$. Nevertheless, for 
global convergence, the method \cite{WrNoFi09} needs a strong assumption that the 
function $f$ is {\it Lipschitz} continuously differentiable, which is not required 
for our method (see Theorem \ref{converg-1}). In addition, our method can be viewed 
as an extension of the first projected gradient method (namely, SPG1) studied in 
\cite{BiMaRa00} for smooth problems, but their method cannot. Finally, 
local convergence is established for our method (see Theorem \ref{lconverg1}), but 
not studied for their method. 
\end{remark}

\gap

We next prove global convergence of the nonmonotone gradient method I. Before proceeding, 
we establish two technical lemmas below. The first lemma shows that if $x^k\in X$ 
is a nonstationary point, there exists an $\alpha_k >0$ in step 2a) so that 
\eqnok{reduct} is satisfied, and hence the above method is well defined.     

\begin{lemma} \label{alpha-exist}
Suppose that $H_k \succ 0$ and $x^k\in X$ is a nonstationary point of problem 
\eqnok{nonsmooth}. Then, there exists $\tilde\alpha >0$ such that 
$d^k = d_{H_k(\alpha_k)}(x^k)$, where $H_k(\alpha_k)=\alpha_k H_k$, 
satisfies \eqnok{reduct} whenever $\alpha_k \ge \tilde\alpha$. 
\end{lemma}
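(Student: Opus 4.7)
My plan is to reduce \eqnok{reduct} to the stronger one-step condition $F(x^k + d^k) \le F(x^k) + \sigma \Delta_k$ and then combine three ingredients: Lemma \ref{step-bdd}(a), the first-order optimality of \eqnok{dir}, and the non-stationarity of $x^k$. The reduction is immediate because $F(x^k) \le \max_{[k-M]^+ \le i \le k} F(x^i)$. Throughout I write $d^k := d_{\alpha_k H_k}(x^k)$, which is well defined and nonzero (by Lemma \ref{opt-char} and the non-stationarity hypothesis) for every $\alpha_k > 0$. As a first step I would record two estimates on $d^k$. From Lemma \ref{step-bdd}(a) with $H = \alpha_k H_k$,
\[
-\Delta_k \ge \alpha_k \lmin(H_k)\,\|d^k\|^2.
\]
On the other hand, comparing $d^k$ with the feasible choice $d = 0$ in \eqnok{dir} and invoking the subgradient inequality $\q(x^k + d^k) \ge \q(x^k) + z^T d^k$ for any fixed $z \in \partial \q(x^k)$ gives $-\Delta_k \le \|\nabla f(x^k) + z\|\,\|d^k\|$. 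Combining these two bounds yields $\|d^k\| = O(1/\alpha_k)$, so in particular $d^k \to 0$ as $\alpha_k \to \infty$.

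The main obstacle will be to show that $\alpha_k \|d^k\|$ stays bounded away from zero as $\alpha_k \to \infty$; this is where the non-stationarity of $x^k$ is crucial. The first-order optimality condition for \eqnok{dir} gives
\[
\alpha_k H_k d^k + \nabla f(x^k) + z_k + v_k = 0
\]
for some $z_k \in \partial \q(x^k + d^k)$ and $v_k \in N_X(x^k + d^k)$. Suppose for contradiction that along some subsequence $\alpha_{k_j} \|d^{k_j}\| \to 0$; then $\alpha_{k_j} H_k d^{k_j} \to 0$, and hence $\nabla f(x^k) + z_{k_j} + v_{k_j} \to 0$. Since $d^{k_j} \to 0$ by the previous paragraph and $\partial \q$ is locally bounded on $\Re^n$ by convexity, a further subsequence of $\{z_{k_j}\}$ converges to some $z^* \in \partial \q(x^k)$ by the upper semicontinuity of $\partial \q$ (as in Theorem 24.4 of \cite{Rock70}). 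Then $v_{k_j} \to v^* := -\nabla f(x^k) - z^*$, and the outer semicontinuity of $N_X(\cdot)$ (Lemma 2.42 of \cite{Rus06}) gives $v^* \in N_X(x^k)$. Therefore $0 \in \nabla f(x^k) + \partial \q(x^k) + N_X(x^k)$, contradicting the non-stationarity of $x^k$. Hence there exist $c > 0$ and $\alpha_* > 0$ such that $\alpha_k \|d^k\| \ge c$ whenever $\alpha_k \ge \alpha_*$.

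Finally, continuous differentiability of $f$ and the fundamental theorem of calculus give
\[
F(x^k + d^k) - F(x^k) - \Delta_k = \int_0^1 \bigl(\nabla f(x^k + t d^k) - \nabla f(x^k)\bigr)^T d^k\, dt = o(\|d^k\|)
\]
as $\|d^k\| \to 0$, using only the continuity of $\nabla f$ at $x^k$. The desired one-step descent is equivalent to bounding this remainder by $(1-\sigma)(-\Delta_k)$. Using the estimate $-\Delta_k \ge \alpha_k \lmin(H_k)\,\|d^k\|^2$ from the first paragraph and dividing by $\|d^k\|$, it would suffice that
\[
\frac{o(\|d^k\|)}{\|d^k\|} \le (1-\sigma)\,\alpha_k \lmin(H_k)\,\|d^k\|.
\]
By the second paragraph the right-hand side is bounded below by the positive constant $(1-\sigma)\,c\,\lmin(H_k)$ once $\alpha_k \ge \alpha_*$, while the left-hand side tends to zero as $\alpha_k \to \infty$. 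Choosing $\tilde\alpha \ge \alpha_*$ large enough to enforce this inequality would then yield the claim.
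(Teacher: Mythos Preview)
Your proof is correct and follows essentially the same route as the paper's: both arguments (i) show $\|d^k\|\to 0$ by comparing $d^k$ to $d=0$ in \eqnok{dir}, (ii) show $\liminf_{\alpha_k\to\infty}\alpha_k\|d^k\|>0$ by passing to the limit in the optimality condition for \eqnok{dir} and invoking the outer semicontinuity of $\partial\q$ and $N_X$, and (iii) use the first-order Taylor remainder $f(x^k+d^k)-f(x^k)-\nabla f(x^k)^Td^k=o(\|d^k\|)$ together with the lower bound $-\Delta_k\ge\alpha_k\lmin(H_k)\|d^k\|^2$ to force the descent condition for large $\alpha_k$. The only cosmetic differences are that the paper bounds $\alpha\|d(\alpha)\|$ via the directional derivative $F'(x^k;\cdot)$ whereas you use a subgradient inequality, and the paper records the two-sided estimate $\alpha\,d(\alpha)^T H_k d(\alpha)=\Theta(\|d(\alpha)\|)$ explicitly while you only need (and use) the one-sided bound.
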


\begin{proof}
For simplicity of notation, let $d(\alpha) = d_{H_k(\alpha)}(x^k)$, 
where $H_k(\alpha)=\alpha H_k$ for any $\alpha>0$. Then, it follows from 
\eqnok{dir} that for all $\alpha >0$, 
\beq \label{bound-ad}
\alpha \|d(\alpha)\|  \le  -\frac{2[\nabla f(x^k)^T d(\alpha)+\q(x^k+d(\alpha))-\q(x^k)]}
{\lambdam(H_k)\|d(\alpha)\|}  
\le  -\frac{2F'(x^k,d(\alpha)/\|d(\alpha)\|)}{\lambdam(H_k)}.
\eeq
Thus, we easily see that the set $\tilde S : = \{\alpha \|d(\alpha)\|: \alpha >0\}$ 
is bounded. It implies that $\|d(\alpha)\| \to 0$ as $\alpha \to \infty$. We claim 
that 
\beq \label{alphad1}
\liminf_{\alpha \to \infty} \alpha \|d(\alpha)\| \ > \ 0.
\eeq
Suppose not. Then there exists a sequence $\{\balpha_l\} \uparrow \infty$ 
such that $\balpha_l \|d(\balpha_l)\| \to 0$ as $l \to \infty$. Invoking that 
$d(\balpha_l)$ is the optimal solution of \eqnok{dir} with $x=x^k$, $H=\balpha_lH_k$ 
and $\alpha = \balpha_l$, we have 
\[
0 \in \nabla f(x^k) + \balpha_l H_k d(\balpha_l) + \partial \q(x^k+d(\balpha_l)) 
+ N_X(x^k+d(\balpha_l)).
\]
Upon taking limits on both sides as $l \to \infty$, and using semicontinuity 
of $\partial \q(\cdot)$ and $N_X(\cdot)$ (see Theorem $24.4$ 
of \cite{Rock70} and Lemma $2.42$ of \cite{Rus06}), and the relations 
$\|d(\balpha_l)\| \to 0$ and $\balpha_l \|d(\balpha_l)\| \to 0$, we see 
that \eqnok{station} holds at $x^k$, which contradicts the nonstationarity 
of $x^k$. Hence, \eqnok{alphad1} holds. We observe that 
\[
\alpha d(\alpha)^T H_k d(\alpha) \ge \lmin(H_k) \alpha \|d(\alpha)\|^2,
\]
which together with \eqnok{alphad1} and $H_k \succ 0$, implies that 
$\|d(\alpha)\| = O\left(\alpha d(\alpha)^T H_k d(\alpha)\right)$ as 
$\alpha \to \infty$. In addition, using the boundedness of $\tilde S$ 
and $H_k \succ 0$, we have $\alpha d(\alpha)^T H_k d(\alpha) = 
O\left(\|d(\alpha)\|\right)$ as $\alpha \to \infty$. Thus, we obtain 
that 
\beq \label{eq-bdd}
\alpha d(\alpha)^T H_k d(\alpha) = \Theta(\|d(\alpha)\|) \ \ \ \mbox{as} \ 
\alpha \to \infty.
\eeq
This relation together with Lemma \ref{step-bdd}(a) implies that 
\beq \label{delta-bdd}
\q(x^k) - \nabla f(x^k)^T d(\alpha) - \q(x^k+d(\alpha)) \ge \alpha 
d(\alpha)^T H_k d(\alpha) = \Theta(\|d(\alpha)\|).
\eeq
Using this result, and the relation $\|d(\alpha)\| \to 0$ as $\alpha \to \infty$, 
we further have 
\beqa
F(x^k+d(\alpha)) - \max_{[k-M]^+ \le i \le k} F(x^i) &\le & 
F(x^k+d(\alpha)) - F(x^k) \nn \\ 
&=& f(x^k+d(\alpha)) - f(x^k) + \q(x^k+d(\alpha)) - \q(x^k) \nn \\ [4pt]
&=& \nabla f(x^k)^T d(\alpha) + \q(x^k+d(\alpha)) - \q(x^k) + o(\|d(\alpha)\|) \nn \\ [4pt]
& \le & \sigma [\nabla f(x^k)^T d(\alpha) + \q(x^k+d(\alpha)) - \q(x^k)], \label{step-exist}   
\eeqa
provided $\alpha$ is sufficiently large. It implies that the conclusion holds.  
\end{proof}

\vgap

The following lemma shows that the search directions $\{d^k\}$ approach zero, 
and the sequence of objective function values $\{F(x^k)\}$ also converges.  

\begin{lemma} \label{seq-converg1} 
Suppose that $F$ is bounded below in $X$ and uniformly continuous in the 
the level set $\cL=\{x\in X: F(x) \le F(x^0)\}$. Then, the sequence 
$\{x^k\}$ generated by the nonmonotone gradient method I satisfies 
$\lim_{k\to \infty} d^k =0$. Moreover, the sequence $\{F(x^k)\}$ converges.   
\end{lemma}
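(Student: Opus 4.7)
The plan is to adapt the classical nonmonotone line search analysis of Grippo--Lampariello--Lucidi to our setting. Define the reference value
\[
V_k := \max_{[k-M]^+ \le i \le k} F(x^i),
\]
and pick any $\ell(k)$ attaining this maximum, so $V_k = F(x^{\ell(k)})$.

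\textbf{Step 1 (monotonicity of reference values).} Since $d^k = d_H(x^k)$ with $H = \alpha_k H_k \succ 0$, Lemma~\ref{step-bdd}(a) yields
\[
-\Delta_k \ \ge \ \alpha_k\lmin(H_k)\|d^k\|^2 \ \ge \ \underline\alpha\,\underline\lambda\,\|d^k\|^2 \ \ge \ 0.
\]
Combining this with the acceptance test \eqnok{reduct} gives $F(x^{k+1}) \le V_k$, and a short index comparison then shows $V_{k+1} \le V_k$. Since $F$ is bounded below, $V_k\downarrow \bar V$ for some $\bar V$. Moreover $F(x^k) \le V_k \le V_0 = F(x^0)$, so all iterates lie in the level set $\cL$, where $F$ is uniformly continuous.

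\textbf{Step 2 (induction on the sliding window).} I would then show by induction on $j \in \{1,\ldots,M+1\}$ that
\[
\lim_{k\to\infty}\|d^{\ell(k)-j}\| = 0 \quad\text{and}\quad \lim_{k\to\infty} F(x^{\ell(k)-j}) = \bar V.
\]
For $j=1$, applying \eqnok{reduct} at iteration $\ell(k)-1$ gives
\[
V_{\ell(k)} = F(x^{\ell(k)}) \le V_{\ell(k)-1} + \sigma \Delta_{\ell(k)-1}.
\]
Since $V_{\ell(k)}, V_{\ell(k)-1}\to\bar V$, we deduce $\Delta_{\ell(k)-1}\to 0$; Step~1 then forces $\|d^{\ell(k)-1}\|\to 0$. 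Uniform continuity on $\cL$ converts this into $F(x^{\ell(k)-1}) - F(x^{\ell(k)}) = F(x^{\ell(k)-1}) - F(x^{\ell(k)-1}+d^{\ell(k)-1}) \to 0$, so $F(x^{\ell(k)-1})\to\bar V$. The inductive step is structurally identical: apply \eqnok{reduct} at iteration $\ell(k)-j-1$ and use the hypothesis $F(x^{\ell(k)-j})\to\bar V$ in place of $F(x^{\ell(k)})$.

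\textbf{Step 3 (propagation to the full sequence).} For every $k$, the inclusion $\ell(k+M+1) \in [k+1, k+M+1]$ lets us write $k = \ell(k+M+1) - j(k)$ with $j(k) \in \{1,\ldots,M+1\}$. Partitioning the indices $k$ according to the finitely many possible values of $j(k)$ and invoking Step~2 on each sub-sequence, we conclude $\|d^k\|\to 0$ and $F(x^k)\to\bar V$; in particular $\{F(x^k)\}$ converges. The delicate part will be the bookkeeping in Step~2: each application of \eqnok{reduct} at iteration $\ell(k)-j-1$ references the (generally older) iterate $x^{\ell(\ell(k)-j-1)}$, and the induction hinges on translating $\|d\|\to 0$ into $F$-values tending to $\bar V$, which is possible only because $F$ is uniformly continuous on $\cL$ and $V_k \downarrow \bar V$.
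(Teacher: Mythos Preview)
Your argument is correct and follows essentially the same route as the paper's proof: establish monotonicity of the reference values $V_k$, carry out an induction over the window (proving $d^{\ell(k)-j}\to 0$ and $F(x^{\ell(k)-j})\to\bar V$), and then propagate to the full sequence via the fact that every index $k$ is of the form $\ell(\cdot)-j$ for some $j\in\{1,\dots,M+1\}$. One small slip: in Step~2 you write $V_{\ell(k)} = F(x^{\ell(k)})$, which is false in general; what you actually have (and what you need) is $F(x^{\ell(k)}) = V_k \to \bar V$ and $V_{\ell(k)-1}\to\bar V$, and the sandwich still forces $\Delta_{\ell(k)-1}\to 0$.
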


\begin{proof}
We first observe that $\{x^k\} \subseteq \cL$. Let $l(k)$ be an integer such 
that $[k-M]^+ \le l(k) \le k$ and 
\[
F(x^{l(k)}) = \max \{F(x^i): [k-M]^+ \le i \le k\}
\]
for all $k \ge 0$. We clearly observe that $F(x^{k+1}) \le F(x^{l(k)})$ for all $k \ge 0$, 
which together with the definition of $l(k)$ implies that the sequence $\{F(x^{l(k)})\}$ 
is monotonically nonincreasing. Further, since $F$ is bounded below in $X$, we have 
\beq \label{F-limit}
\lim_{k\to \infty} F(x^{l(k)}) = F^*
\eeq
for some $F^* \in \Re$. We next prove by induction that the following limits 
hold for all $j \ge 1$: 
\beq \label{2-limits}
\lim_{k \to \infty} d^{l(k)-j} = 0, \ \ \ \ \lim_{k \to \infty} F(x^{l(k)-j}) 
= F^*.
\eeq
Using \eqnok{reduct} and \eqnok{delta} with $k$ replaced by $l(k)-1$, we obtain 
that 
\beq \label{reduct-F1}
F(x^{l(k)}) \le F(x^{l(l(k)-1)}) + \sigma \Delta_{l(k)-1}.
\eeq
Replacing $k$ and $\alpha$ by $l(k)-1$ and $\alpha_{l(k)-1}$ in \eqnok{delta-bdd}, 
respectively, and using $H_{l(k)-1} \succeq \underline\lambda I$ and the 
definition of $\Delta_{l(k)-1}$ (see \eqnok{delta}), we have 
\[
\Delta_{l(k)-1} \le -\underline\lambda\alpha_{l(k)-1}\|d^{l(k)-1}\|^2.   
\]
The above two inequalities yield that 
\beq \label{F-reduct}
F(x^{l(k)}) \le F(x^{l(l(k)-1)}) - \sigma\underline\lambda\alpha_{l(k)-1}
\|d^{l(k)-1}\|^2,
\eeq  
which together with \eqnok{F-limit} implies that $\lim_{k\to\infty} 
\alpha_{l(k)-1}\|d^{l(k)-1}\|^2=0$. Further, noticing that $\alpha_k 
\ge \underline\alpha$ for all $k$, we obtain that  
$\lim_{k \to \infty} d^{l(k)-1} = 0$. Using this result and \eqnok{F-limit}, 
we have 
\beq \label{reduct-1}
\lim_{k \to \infty} F(x^{l(k)-1}) = \lim_{k \to \infty} F(x^{l(k)}-d^{l(k)-1}) 
= \lim_{k \to \infty} F(x^{l(k)}) = F^*, 
\eeq 
where the second equality follows from uniform continuity of $F$ in $\cL$. 
Therefore, \eqnok{2-limits} holds for $j=1$. We now need to show that if 
\eqnok{2-limits} holds for $j$, then it also holds for $j+1$. Using a similar 
argument as that leading to \eqnok{F-reduct}, we have
\[
F(x^{l(k)-j}) \le F(x^{l(l(k)-j-1)}) - \sigma\underline\lambda\alpha_{l(k)-j-1}
\|d^{l(k)-j-1}\|^2,
\]
which together with \eqnok{F-limit}, the induction assumption $\lim_{k \to \infty} 
F(x^{l(k)-j}) = F^*$, and the fact that $\alpha_{l(k)-j-1} \ge \underline\alpha$ 
for all $k$, yields $\lim_{k \to \infty} d^{l(k)-j-1} = 0$. 
Using this result, the induction assumption $\lim_{k \to \infty} F(x^{l(k)-j}) = F^*$, 
and a similar argument as that leading to \eqnok{reduct-1}, we can show that 
$\lim_{k \to \infty} F(x^{l(k)-j-1}) = F^*$. Hence, \eqnok{2-limits} holds 
for $j+1$. 

Finally, we will prove that $\lim_{k \to \infty} d^k = 0$ and 
$\lim_{k \to \infty} F(x^k) = F^*$. By the definition of $l(k)$, we see that 
for $k \ge M+1$, $k-M-1 = l(k)-j$ for some $1 \le j \le M+1$, which together 
with the first limit in \eqnok{2-limits}, implies that $\lim_{k \to \infty} 
d^k = \lim_{k \to \infty} d^{k-M-1} = 0$. Additionally, we observe that 
\[
x^{l(k)} = x^{k-M-1} + \sum^{\bar l_k}_{j=1} d^{l(k)-j} \ \ \ \forall k \ge M+1,
\]
where $\bar l_k = l(k)-(k-M-1) \le M+1$. Using the above identity, \eqnok{2-limits}, 
and uniform continuity of $F$ in $\cL$, we see that $\lim_{k \to \infty} F(x^k) 
= \lim_{k \to \infty} F(x^{k-M-1}) = F^*$. Thus, the conclusion of this lemma 
holds.   
\end{proof}

\vgap

We are now ready to show that the nonmonotone gradient method I is 
globally convergent.

\begin{theorem} \label{converg-1}
Suppose that $F$ is bounded below in $X$ and uniformly continuous in the 
level set $\cL=\{x\in X: F(x) \le F(x^0)\}$. Then, any accumulation point 
of the sequence $\{x^k\}$ generated by the nonmonotone gradient method 
I is a stationary point of \eqnok{nonsmooth}.  
\end{theorem}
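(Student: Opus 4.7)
My plan is to argue by contradiction, starting from the conclusions $d^k \to 0$ and convergence of $\{F(x^k)\}$ provided by Lemma \ref{seq-converg1}. Fix an accumulation point $x^*$ of $\{x^k\}$ with $\{x^k\}_{k \in K} \to x^*$; by passing to a subsequence (still called $K$) I may assume $H_k \to H^*$ for some $\underline\lambda I \preceq H^* \preceq \bar\lambda I$. Suppose, toward contradiction, that $x^*$ is not stationary; Lemma \ref{opt-char} then gives $d_{\bar H}(x^*) \neq 0$ for every $\bar H \succ 0$. The argument will split according to whether $\{\alpha_k\}_{k\in K}$ admits a bounded subsequence or diverges to infinity.

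In the bounded case, I would extract a further subsequence along which $\alpha_k \to \alpha^* \in [\underline\alpha, \infty)$ and invoke continuity of the map $(x, M) \mapsto d_M(x)$ in both arguments, which follows from joint continuity and uniform strong convexity (modulus at least $\underline\lambda$) of the objective in \eqnok{dir}. Along this subsequence $d^k = d_{\alpha_k H_k}(x^k) \to d_{\alpha^* H^*}(x^*)$, and $d^k \to 0$ forces $d_{\alpha^* H^*}(x^*) = 0$, contradicting nonstationarity.

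The main obstacle is the unbounded case $\alpha_k \to \infty$, where the quadratic regularizer in \eqnok{dir} dominates and $d^k \to 0$ no longer carries direct stationarity information about $x^*$. My plan is to exploit the \emph{failure} of the test \eqnok{reduct} at the preceding trial value $\hat\alpha_k := \alpha_k/\eta$, which must have occurred for $k \in K$ sufficiently large since $\alpha_k > \bar\alpha \ge \alpha_k^0$ forces $j \ge 1$ in step 2a). Setting $\hat d^k := d_{\hat\alpha_k H_k}(x^k)$ and $\hat\Delta_k$ to be the corresponding quantity in \eqnok{delta}, that failure together with $\max_{[k-M]^+ \le i \le k} F(x^i) \ge F(x^k)$ gives
\[
F(x^k + \hat d^k) - F(x^k) \;>\; \sigma\,\hat\Delta_k.
\]
Lemma \ref{dchg-H} applied with $H = \alpha_k H_k$, $\tilde H = \hat\alpha_k H_k$ (so $Q = \eta^{-1} I$) yields $\|\hat d^k\| \le (\eta\bar\lambda/\underline\lambda)\|d^k\|$, hence $\hat d^k \to 0$. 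A first-order Taylor expansion of $f$ rewrites the failure inequality as $(1-\sigma)(-\hat\Delta_k) \le o(\|\hat d^k\|)$; combining this with the lower bound $-\hat\Delta_k \ge \hat\alpha_k \underline\lambda \|\hat d^k\|^2$ from Lemma \ref{step-bdd}(a) forces $\alpha_k \|\hat d^k\| \to 0$, and in particular $\hat\alpha_k H_k \hat d^k \to 0$. The first-order optimality condition for $\hat d^k$ in \eqnok{dir},
\[
0 \in \nabla f(x^k) + \hat\alpha_k H_k \hat d^k + \partial \q(x^k + \hat d^k) + N_X(x^k + \hat d^k),
\]
then passes to the limit along $K$ via outer semicontinuity of $\partial \q$ and $N_X$ (Theorem 24.4 of \cite{Rock70}, Lemma 2.42 of \cite{Rus06}) to give $0 \in \nabla f(x^*) + \partial \q(x^*) + N_X(x^*)$, i.e., stationarity of $x^*$ — the desired contradiction.

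The delicate step I anticipate is extracting $\alpha_k \|\hat d^k\| \to 0$ from the failed trial: carefully matching the $o(\|\hat d^k\|)$ Taylor residual against the quadratic lower bound on $-\hat\Delta_k$ is where the algorithm's particular design — scaling the Hessian approximation rather than the step length — really needs to be felt, and it is the one place where continuous differentiability of $f$ (rather than merely a Lipschitz hypothesis as in \cite{WrNoFi09}) must be leveraged through a first-order expansion. The remaining passages to the limit are routine given the lemmas already established.
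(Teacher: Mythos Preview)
Your argument is correct, and in the unbounded-$\alpha_k$ case it takes a genuinely different route from the paper's. The paper proves $\{\alpha_k\}_{k\in K}$ is bounded by contradiction: assuming $\alpha_k\to\infty$, it bounds $\bar\alpha_k\|d^k(\bar\alpha_k)\|$ via the directional-derivative estimate \eqnok{bound-ad}, then splits on whether $\liminf \bar\alpha_k\|d^k(\bar\alpha_k)\|$ is zero (giving stationarity by passing to the limit in the optimality condition) or positive (forcing $\bar\alpha_k (d^k(\bar\alpha_k))^TH_kd^k(\bar\alpha_k)=\Theta(\|d^k(\bar\alpha_k)\|)$, from which the descent test \eqnok{reduct} would actually have held at $\bar\alpha_k$). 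You instead transfer $d^k\to0$ from Lemma~\ref{seq-converg1} to $\hat d^k\to0$ via Lemma~\ref{dchg-H} (with $Q=\eta^{-1}I$, yielding the clean constant $\eta\bar\lambda/\underline\lambda$), and then read the failed test through a first-order expansion to get $(1-\sigma)(-\hat\Delta_k)\le o(\|\hat d^k\|)$; combined with $-\hat\Delta_k\ge\hat\alpha_k\underline\lambda\|\hat d^k\|^2$ this forces $\hat\alpha_k\|\hat d^k\|\to0$ directly, and a single limit in the optimality condition finishes. Your approach is more economical---it avoids the paper's $\liminf$ dichotomy and the ``test would have succeeded'' detour---at the price of invoking Lemma~\ref{dchg-H}, which the paper proves but reserves for the local-rate analysis. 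The bounded-$\alpha_k$ case is essentially the same in both proofs (continuity of the solution map versus passing to the limit in its optimality condition are equivalent here). One small point worth making explicit when you write it out: the remainder $f(x^k+\hat d^k)-f(x^k)-\nabla f(x^k)^T\hat d^k=o(\|\hat d^k\|)$ is uniform in $k\in K$ because $x^k\to x^*$ and $\hat d^k\to0$ confine everything to a compact set on which $\nabla f$ is uniformly continuous.
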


\begin{proof}
Suppose for contradiction that $x^*$ is an accumulation point of $\{x^k\}$ 
that is a nonstationary point of \eqnok{nonsmooth}. Let $K$  
be the subsequence such that $\{x^k\}_{k\in K} \to x^*$. We first
claim that $\{\alpha_k\}_{k\in K}$ is bounded. Suppose not. Then there exists  
a subsequence of $\{\alpha_k\}_{k\in K}$ that goes to $\infty$. Without loss of 
generality, we assume that $\{\alpha_k\}_{k\in K} \to \infty$. 
For simplicity of notations, let $\balpha_k=\alpha_k/\eta$, $d^k(\alpha) 
= d_{H_k(\alpha)}(x^k)$ for $k\in K$ and $\alpha>0$, where $H_k(\alpha)=\alpha H_k$. 
Since $\{\alpha_k\}_{k\in K} \to \infty$ and $\alpha^0_k \le \balpha$, there 
exists some index $\bar k \ge 0$ such that $\alpha_k > \alpha^0_k$ for all 
$k\in K$ with $k \ge \bar k$. By the particular choice of $\alpha_k$ specified 
in steps (2a) and (2b), we have 
\beq \label{low-bdd1}
F(x^k+d^k(\balpha_k)) >  \max_{[k-M]^+ \le i \le k} F(x^i) + \sigma 
[\nabla f(x^k)^T d^k(\balpha_k) + \q(x^k+d^k(\balpha_k)) - \q(x^k)],   
\eeq  
Using a similar argument as that leading to \eqnok{bound-ad}, we have
\[
\balpha_k \|d^k(\balpha_k)\|  \le  -\frac{2F'(x^k,d^k(\balpha_k)/\|d^k(\balpha_k)\|)}
{\lambdam(H_k)} \ \ \ \forall k \in K,
\]
which along with the relations $H_k \succeq \underline\lambda I$ and 
$\{x^k\}_{k\in K} \to x^*$, implies that 
$\{\balpha_k \|d^k(\balpha_k)\|\}_{k\in K}$ is bounded. Since $\{\balpha_k\}_{k\in K} 
\to \infty$, we further have $\{\|d^k(\balpha_k)\|\}_{k\in K} \to 0$ . We now 
claim that 
\beq \label{balphad}
\liminf_{k \in K, k \to \infty} \balpha_k \|d^k(\balpha_k)\| \ > \ 0.
\eeq
Suppose not. By passing to a subsequence if necessary, we can assume that 
$\{\balpha_k \|d^k(\balpha_k)\|\}_{k\in K} \to 0$. Invoking that $d^k(\balpha_k)$ 
is the optimal solution of \eqnok{dir} with $x=x^k$ and $H=\balpha_k H_k$,
we have 
\[
0 \in \nabla f(x^k) + \balpha_k H_k d^k(\balpha_k) + \partial \q(x^k+d^k(\balpha_k)) 
+ N_X(x^k+d^k(\balpha_k))  \ \ \ \forall k \in K.
\]
Upon taking limits on both sides as $k \in K \to \infty$, and using 
semicontinuity of $\partial \q(\cdot)$ and $N_X(\cdot)$ (see Theorem $24.4$ 
of \cite{Rock70} and Lemma $2.42$ of \cite{Rus06}), the relations 
$\underline\lambda I \preceq H_k \preceq \bar\lambda I$, 
$\{\|d^k(\balpha_k)\|\}_{k\in K} \to 0$, $\{\balpha_k 
\|d^k(\balpha_k)\|\}_{k\in K} \to 0$ and $\{x^k\}_{k \in K} \to x^*$, we 
see that \eqnok{station} holds at $x^*$, which contradicts 
nonstationarity of $x^*$.Thus, \eqnok{balphad} holds. Now, using \eqnok{balphad}, 
the relation $H_k \succeq \underline\lambda I$, boundedness of $\{\balpha_k 
\|d^k(\balpha_k)\|\}_{k\in K}$, and a similar argument as that leading to 
\eqnok{eq-bdd}, we observe that $\balpha_k d^k(\balpha_k)^T 
H_k d^k(\balpha_k) = \Theta(\|d^k(\balpha_k)\|)$ as $k \in K \to \infty$. 
Using this result and a similar argument as that leading to \eqnok{step-exist}, we have
\[
F(x^k+d^k(\balpha_k)) \le  \max_{[k-M]^+ \le i \le k} F(x^i) + \sigma 
[\nabla f(x^k)^T d^k(\balpha_k) + \q(x^k+d^k(\balpha_k)) - \q(x^k)],   
\]
provided that $k\in K$ is sufficiently large. The above inequality evidently 
contradicts \eqnok{low-bdd1}. Thus, $\{\alpha_k\}_{k\in K}$ is bounded.

Finally, invoking that $d^k=d^k(\alpha_k)$ is the optimal solution of 
\eqnok{dir} with $x=x^k$, $H=\alpha_k H_k$, we have 
\beq \label{dk-opt}
0 \in \nabla f(x^k) + \alpha_k H_k d^k + \partial \q(x^k+d^k) 
+ N_X(x^k+d^k)  \ \ \ \forall k \in K.
\eeq
By Lemma \ref{seq-converg1}, we have $\{d^k\}_{k\in K} \to 0$. Upon 
taking limits on both sides of \eqnok{dk-opt} as $k \in K \to \infty$, 
and using semicontinuity of $\partial \q(\cdot)$ and $N_X(\cdot)$ (see 
Theorem $24.4$ of \cite{Rock70} and Lemma $2.42$ of \cite{Rus06}), and 
the relations $\underline\lambda I \preceq H_k \preceq \bar\lambda I$, 
$\{d^k\}_{k\in K} \to 0$ and $\{x^k\}_{k \in K} \to x^*$, we see that 
\eqnok{station} holds at $x^*$, which contradicts the nonstationarity 
of $x^*$ that is assumed at the beginning of this proof. Therefore, the 
conclusion of this theorem holds.
\end{proof}

\vgap

We next analyze the asymptotic convergence rate of the nonmonotone 
gradient method I under the following assumption, which is the same as 
that made in \cite{TseYun09}. In what follows, we denote by $\bar X$ the 
set of stationary points of problem \eqnok{nonsmooth}. 

\begin{assumption} \label{assump}
\bi
\item[(a)] $\bar X \neq \emptyset$ and, for any $\zeta \ge \min_{x\in X} F(x)$, 
there exists $\tau>0$ and $\epsilon>0$ such that 
\[
\dist(x,\bar X) \le \tau \|d_I(x)\| \ \ \ \mbox{whenever} \ \ \ F(x) \le \zeta, \ 
\|d_I(x)\| \le \epsilon. 
\] 
\item[(b)]
There exists $\delta>0$ such that 
\[
\|x-y\| \ge \delta \ \ \ \mbox{whenever} \ \ \ x\in\bar X, y \in \bar X, F(x) \neq F(y). 
\]
\ei
\end{assumption}

\vgap

We are ready to establish local linear rate of convergence for the nonmonotone 
gradient method I described above. The proof of the following theorem is inspired 
by the work of Tseng and Yun \cite{TseYun09}, who analyzed a similar local convergence 
for a coordinate gradient descent method for a class of nonsmooth minimization problems.    

\begin{theorem} \label{lconverg1}
Suppose that $f$ satisfies \eqnok{lip-cond}, and $F$ is bounded below in $X$ 
and uniformly continuous in the level set $\cL=\{x\in X: F(x) \le F(x^0)\}$.  
Then, the sequence $\{x^k\}$ generated by the nonmonotone gradient method I 
satisfies 
\[
F(x^{l(k)}) - F^* \ \le \ c (F(x^{l(l(k))-1)}-F^*),
\] 
provided $k$ is sufficiently large, where $F^* = \lim_{k \to \infty} F(x^k)$ 
(see Lemma \ref{seq-converg1}), and $c$ is some constant in $(0,1)$.
\end{theorem}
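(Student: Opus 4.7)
The plan is to complement the fundamental descent estimate obtained in the proof of Lemma \ref{seq-converg1},
\begin{equation} \label{stardescent}
F(x^{l(k)}) \le F(x^{l(l(k)-1)}) - \sigma \underline\lambda \underline\alpha \|d^{l(k)-1}\|^2,
\end{equation}
with a one-step quadratic majorization $F(x^{k+1}) - F^* \le C_0 \|d^k\|^2$ valid for $k$ sufficiently large. Replacing $k$ by $l(k)-1$ in this majorization yields $F(x^{l(k)}) - F^* \le C_0 \|d^{l(k)-1}\|^2$; combined with \eqnok{stardescent} this reads $F(x^{l(k)}) - F^* \le (C_0/(\sigma\underline\lambda\underline\alpha))(F(x^{l(l(k)-1)}) - F(x^{l(k)}))$. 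Since $\{F(x^{l(k)})\}$ is monotone non-increasing with limit $F^*$, one has $F(x^{l(k)}) \ge F^*$, so elementary rearrangement delivers the claim with
\[
c = \frac{C_0}{C_0 + \sigma \underline\lambda \underline\alpha} \in (0,1).
\]

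The preparatory work is straightforward. Since $\nabla f$ is $L$-Lipschitz and $H_k \succeq \underline\lambda I$, Lemma \ref{step-bdd}(d) ensures that every $\alpha \ge L/[2(1-\sigma)\underline\lambda]$ already satisfies the single-step descent $F(x^k+d) \le F(x^k) + \sigma \Delta_k$, hence a fortiori the windowed test \eqnok{reduct}. The backtracking in steps 2a--2b therefore terminates with $\alpha_k \le \alpha_{\max} := \max\{\bar\alpha,\eta L/[2(1-\sigma)\underline\lambda]\}$, so that the spectrum of $\alpha_k H_k$ lies in the fixed interval $[\underline\alpha\,\underline\lambda,\alpha_{\max}\bar\lambda]$. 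Lemma \ref{dchg-H} then furnishes a constant $c_1$ with $\|d_I(x^k)\| \le c_1 \|d^k\|$. Lemma \ref{seq-converg1} gives $\|d^k\|\to 0$, so for $k$ large Assumption \ref{assump}(a) applies and yields $\dist(x^k,\bar X) \le \tau c_1 \|d^k\| =: C_\tau \|d^k\|$. Let $\bar x^k \in \bar X$ be a nearest point to $x^k$; since $\|\bar x^k - x^k\|\to 0$ and $F(x^k)\to F^*$, every accumulation point of $\{\bar x^k\}$ is a stationary point at $F$-value $F^*$, and the separation in Assumption \ref{assump}(b) pins $F(\bar x^k) = F^*$ exactly for $k$ sufficiently large.

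To derive the quadratic majorization, apply Lemma \ref{step-bdd}(b) at iteration $k$ with $\alpha=1$, $x = x^k$, $x' = x^{k+1}$, and $\bar x = \bar x^k$, obtaining
\[
(\nabla f(x^k) + \alpha_k H_k d^k)^T (x^{k+1} - \bar x^k) + \q(x^{k+1}) - \q(\bar x^k) \le 0.
\]
Adding $f(x^{k+1}) - f(\bar x^k)$ and using the Lipschitz-gradient bound $f(x^{k+1}) - f(\bar x^k) \le \nabla f(\bar x^k)^T(x^{k+1}-\bar x^k) + (L/2)\|x^{k+1}-\bar x^k\|^2$, together with $F(\bar x^k) = F^*$, rearranges to
\[
F(x^{k+1}) - F^* \le \bigl[\nabla f(\bar x^k) - \nabla f(x^k) - \alpha_k H_k d^k\bigr]^T (x^{k+1} - \bar x^k) + \tfrac{L}{2}\|x^{k+1} - \bar x^k\|^2.
\]
Substituting $\|\nabla f(\bar x^k) - \nabla f(x^k)\| \le L C_\tau \|d^k\|$, $\|\alpha_k H_k d^k\| \le \alpha_{\max}\bar\lambda \|d^k\|$, and $\|x^{k+1} - \bar x^k\| \le (C_\tau + 1)\|d^k\|$ collapses the right-hand side into $C_0\|d^k\|^2$ with an explicit $C_0$ depending only on $L$, $C_\tau$, $\alpha_{\max}$, and $\bar\lambda$, completing the proof through the first paragraph.

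The principal obstacle is obtaining a \emph{quadratic} rather than merely linear one-step bound on $F(x^{k+1}) - F^*$. Using generic local Lipschitz continuity of $f$ and $\q$ would only yield $F(x^{k+1}) - F^* = O(\|d^k\|)$, which in combination with \eqnok{stardescent} would give at most sublinear convergence. The quadratic strength comes from invoking Lemma \ref{step-bdd}(b) specifically at the stationary projection $\bar x^k$, so that the first-order terms cancel against the implicit stationarity of $\bar x^k$ and only the Lipschitz-gradient second-order remainder survives. Identifying $F(\bar x^k) = F^*$ in turn depends crucially on Assumption \ref{assump}(b); without the separation of stationary values, a nearest stationary point could carry a different $F$-value and derail the recursion.
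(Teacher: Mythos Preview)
Your argument follows essentially the same route as the paper's: bound $\alpha_k$ via Lemma~\ref{step-bdd}(d), transfer to $\|d_I(x^k)\|$ via Lemma~\ref{dchg-H}, invoke Assumption~\ref{assump}(a) for the error bound, use Lemma~\ref{step-bdd}(b) at the stationary projection to get a quadratic majorization, and combine with the descent relation~\eqnok{reduct-F1}. The only organizational difference is that you derive the majorization $F(x^{k+1})-F^*\le C_0\|d^k\|^2$ for generic $k$ and then specialize to $k=l(k)-1$, whereas the paper works directly at $l(k)-1$ throughout; and you use the descent lemma for $f$ in place of the Mean Value Theorem. These are cosmetic.

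There is, however, a small but genuine gap in your identification $F(\bar x^k)=F^*$. Your sentence ``every accumulation point of $\{\bar x^k\}$ is a stationary point at $F$-value $F^*$, and the separation in Assumption~\ref{assump}(b) pins $F(\bar x^k)=F^*$'' presumes that $\{\bar x^k\}$ (equivalently $\{x^k\}$) has accumulation points, but nothing in the hypotheses forces $\{x^k\}\subseteq\cL$ to be bounded. The paper avoids this by a two-step argument that does not need any compactness: first, from $\|\bar x^{l(k+1)-1}-\bar x^{l(k)-1}\|\to 0$ (which follows from your own estimates) and Assumption~\ref{assump}(b), one obtains only that $F(\bar x^{l(k)-1})\equiv v$ is eventually \emph{constant}. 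The majorization is then written as $F(x^{l(k)})-v\le c_3\bigl(F(x^{l(l(k)-1)})-F(x^{l(k)})\bigr)$; taking limits gives $F^*\le v$. The reverse inequality $F^*\ge v$ is obtained separately via Lemma~5.1 of \cite{TseYun09} (equivalently: stationarity of $\bar x^k$ gives $\nabla f(\bar x^k)^T(x^k-\bar x^k)+\q(x^k)-\q(\bar x^k)\ge 0$, and the Lipschitz bound on $\nabla f$ then yields $F(x^k)\ge v-(L/2)\|x^k-\bar x^k\|^2\to v$). Your write-up should replace the accumulation-point sentence with this pair of inequalities; once $v=F^*$ is established, the rest of your argument goes through verbatim.
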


\begin{proof}
Invoking $\alpha^0_k \le \balpha$ and the specific choice of $\alpha_k$, 
we see from Lemma \ref{step-bdd}(d) that $\hat\alpha := \sup_k \alpha_k < \infty$. 
Let $H_k(\alpha) = \alpha H_k$. Then, it follows from $\underline \lambda I 
\preceq H_k \preceq \bar\lambda I$ and $\alpha_k \ge \underline\alpha$ that 
$(\underline \alpha \cdot \underline\lambda) I \preceq H_k(\alpha_k) \preceq 
\hat\alpha\bar\lambda I$. Using this relation, Lemma \ref{dchg-H}, $H_k \succeq  
{\underline \lambda} I$, and $d^k=d_{H_k(\alpha_k)}(x^k)$, we obtain that 
\beq \label{d-order}
\|d_I(x^k)\| = \Theta(\|d^k\|),
\eeq
which together with Lemma \ref{seq-converg1} implies $\{d_I(x^k)\} \to 0$. 
Thus, for any $\epsilon >0$, there exists some index $\bar k$ such that 
$d_I(x^{l(k)-1}) \le \epsilon$ for all $k \ge \bar k$. In addition, we 
clearly observe that $F(x^{l(k)-1}) \le F(x^0)$. Then, by Assumption 
\ref{assump}(a) and \eqnok{d-order}, there exists some index $k'$ such 
that 
\beq \label{diff-bdd}
\|x^{l(k)-1}-\bar x^{l(k)-1}\| \le c_1 \|d^{l(k)-1}\| \ \ \ \forall k \ge k'
\eeq
for some $c_1 >0$ and $\bar x^{l(k)-1}\in\bar X$. Note that 
\[
\|x^{l(k+1)-1} - x^{l(k)-1}\| \le \sum_{i=l(k)-1}^{l(k+1)-2} \|d^i\| \le  
\sum_{i=[k-M-1]^+}^{[k-1]^+} \|d^i\|,      
\]
which together with $\{d^k\} \to 0$, implies that 
$\|x^{l(k+1)-1} - x^{l(k)-1}\| \to 0$. Using this result, \eqnok{diff-bdd}, 
and Lemma \ref{seq-converg1}, we obtain 
\[
\ba{lcl}
\|\bar x^{l(k+1)-1} - \bar x^{l(k)-1}\| &\le & \|x^{l(k+1)-1}-\bar x^{l(k+1)-1}\|
+ \|x^{l(k)-1}-\bar x^{l(k)-1}\| + \|x^{l(k+1)-1}-\bar x^{l(k)-1}\| \\ [5pt]
& \le & c_1 \|d^{l(k+1)-1}\| + c_1 \|d^{l(k)-1}\| + \|x^{l(k+1)-1}-\bar x^{l(k)-1}\| 
\ \to \ 0.
\ea
\]
It follows from this relation and Assumption \ref{assump}(b) that there exists 
an index $\hat k \ge k'$ and $v \in \Re$ such that 
\beq \label{eq-f}
F(\bar x^{l(k)-1})= v \ \ \ \forall k \ge \hat k.
\eeq
Then, by Lemma 5.1 of \cite{TseYun09}, we see that
\beq \label{liminf-F}
F^* = \lim_{k\to\infty} F(x^k) = \liminf_{k\to\infty} F(x^{l(k)-1}) \ \ge \ v.
\eeq 
Further, using the definition of $F$, \eqnok{lip-cond}, \eqnok{eq-f}, 
Lemma \ref{step-bdd}(b), and $H_k(\alpha_k) \preceq \hat\alpha \bar\lambda I$, we 
have for $k \ge \hat k$, 
\beqa
F(x^{l(k)}) - v &=& f(x^{l(k)}) +\q(x^{l(k)}) - f(\bx^{l(k)-1}) - \q(\bx^{l(k)-1}) 
\nn \\ [4pt]
&=& \nabla f(\tx^k)^T(x^{l(k)}-\bx^{l(k)-1}) +\q(x^{l(k)}) - \q(\bx^{l(k)-1}) \nn \\ [4pt]
&=& (\nabla f(\tx^k)-\nabla f(x^{l(k)-1})^T(x^{l(k)}-\bx^{l(k)-1}) - 
(H_{l(k)-1}(\alpha_{l(k)-1})d^{l(k)-1})^T(x^{l(k)}-\bx^{l(k)-1}) \nn \\ [4pt]
& & + \left[(\nabla f(x^{l(k)-1})+H_{l(k)-1}(\alpha_{l(k)-1})d^{l(k)-1})^T(x^{l(k)}-\bx^{l(k)-1}) 
+\q(x^{l(k)}) - \q(\bx^{l(k)-1})\right] \nn \\ [4pt]
&\le & L \|\tx^k - x^{l(k)-1}\| \|x^{l(k)}-\bx^{l(k)-1}\| + \hat\alpha\bar\lambda \|d^{l(k)-1}\| 
\|x^{l(k)}-\bx^{l(k)-1}\|, \label{loc-bdd1-1}
\eeqa
where $\tx^k$ is some point lying on the segment joining $x^{l(k)}$ with $\bx^{l(k)-1}$. 
It follows from \eqnok{diff-bdd} that, for $k \ge \hat k$,  
\[
\|\tx^k - x^{l(k)-1}\|   \le  \|x^{l(k)}-x^{l(k)-1}\| + \|x^{l(k)-1}-\bx^{l(k)-1}\| 
= (1+c_1) \|d^{l(k)-1}\|.
\]
Similarly, $\|x^{l(k)}-\bx^{l(k)-1}\| \le (1+c_1) \|d^{l(k)-1}\|$ for $k \ge \hat k$. 
Using these inequalities, Lemma \ref{step-bdd}(a), $H_k(\alpha_k) \succeq (\underline
\alpha \cdot \underline \lambda) I$, and \eqnok{loc-bdd1-1}, we see that for $k \ge \hat k$,  
\[
F(x^{l(k)}) - v \ \le \ -c_2 \Delta_{l(k)-1}
\]  
for some constant $c_2 > 0$. This inequality together with \eqnok{reduct-F1} gives
\beq \label{loc-bdd2}
F(x^{l(k)}) - v \ \le \ c_3 \left(F(x^{l(l(k)-1)})-F(x^{l(k)})\right) 
\ \ \ \forall k \ge \hat k,
\eeq
where $c_3 = c_2/\sigma$. Using $\lim_{k\to \infty} F(x^{l(k)})=F^*$, 
and upon taking limits on both sides of \eqnok{loc-bdd2}, we see that 
$F^* \le v$, which together with \eqnok{liminf-F} implies that $v=F^*$. Using this result and 
upon rearranging terms of \eqnok{loc-bdd2}, we have  
\[
F(x^{l(k)}) - F^* \ \le \ c (F(x^{l(l(k))-1)}-F^*) \ \ \ \forall k \ge \hat k,
\]  
where $c=c_3/(1+c_3)$.
\end{proof}

\vgap

We next present the second nonmonotone gradient method for \eqnok{nonsmooth} as follows. 

\gap

\noindent
\begin{minipage}[h]{6.6 in}
{\bf Nonmonotone gradient method II:} \\ [5pt]
Choose parameters $0< \eta < 1$, $0<\sigma < 1$, $0 < \underline\alpha < \bar\alpha$, 
$0 < \underline\lambda \le \bar\lambda$, and integer $M \ge 0$. Set $k=0$ and 
choose $x^0\in X$. 
\begin{itemize}
\item[1)] Choose $\underline\lambda I \preceq H_k \preceq \bar\lambda I$. 
\item[2)] Solve \eqnok{dir} with $x=x^k$ and $H=H_k$ to obtain 
          $d^k = d_{H}(x)$, and compute $\Delta_k$ according to \eqnok{delta}.  
\item[3)] Choose $\alpha^0_k \in [\underline\alpha, \bar\alpha]$. Find the 
          smallest integer $j \ge 0$ such that $\alpha_k = \alpha^0_k \eta^j$ 
          satisfies   
          \beq \label{reduct2}
           F(x^k + \alpha_k d^k) \ \le \ \max_{[k-M]^+ \le i \le k} F(x^i) + 
           \sigma \alpha_k \Delta_k,
          \eeq  
          where $\Delta_k$ is defined in \eqnok{delta}.
\item[4)] Set $x^{k+1} = x^k + \alpha_k d^k$ and $k \leftarrow k+1$. 
\end{itemize}
\noindent
{\bf end}
\end{minipage}

\gap

\begin{remark}
The above method is closely related to the one proposed in \cite{TseYun09}. When 
the entire coordinate block, that is, $J=\{1,\ldots,n\}$ is chosen for the latter 
method, it becomes a special case of our method with $M=0$, which is a gradient 
descent method. Given that our method is generally a nonmonotone method especially 
when $M \ge 1$, most proofs of global and local convergence for the method 
\cite{TseYun09} do not hold for our method directly. In addition, our method can be 
viewed as an extension of the second projected gradient method (namely, SPG2) 
studied in \cite{BiMaRa00} for smooth problems, but the method \cite{TseYun09} 
generally cannot. 
\end{remark}

\gap

We next prove global convergence of the nonmonotone gradient method II. Before 
proceeding, we establish two technical lemmas below. The first lemma shows that 
if $x^k\in X$ is a nonstationary point, there exists an $\alpha_k >0$ in step 3) 
so that \eqnok{reduct2} is satisfied, and hence the above method is well defined.     

\begin{lemma} \label{alpha-exist2}
Suppose that $H_k \succ 0$ and $x^k\in X$ is a nonstationary point of problem 
\eqnok{nonsmooth}. Then, there exists $\tilde\alpha >0$ such that 
$d^k = d_{H_k}(x^k)$ satisfies \eqnok{reduct2} whenever $0<\alpha_k \le 
\tilde\alpha$. 
\end{lemma}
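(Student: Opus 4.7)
The plan is to show that the Armijo-type condition \eqref{reduct2} holds for all sufficiently small $\alpha_k$, by comparing the left-hand side with the weaker requirement $F(x^k+\alpha_k d^k)\le F(x^k)+\sigma\alpha_k\Delta_k$, which is enough because $F(x^k)$ appears inside the max on the right of \eqref{reduct2} (take $i=k$).

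First I would invoke Lemma \ref{opt-char} together with the nonstationarity of $x^k$ and the positive definiteness of $H_k$ to conclude that $d^k=d_{H_k}(x^k)\neq 0$, and then use Lemma \ref{step-bdd}(a) to deduce $\Delta_k \le -(d^k)^T H_k d^k \le -\lmin(H_k)\|d^k\|^2 < 0$. Thus $\Delta_k$ is strictly negative, which is what makes the Armijo condition meaningful.

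Next I would expand $F(x^k+\alpha d^k)-F(x^k)$ into its smooth and nonsmooth parts. For the smooth part, continuous differentiability of $f$ gives
\[
f(x^k+\alpha d^k)-f(x^k)=\alpha\nabla f(x^k)^T d^k+o(\alpha).
\]
For the nonsmooth part, convexity of $\q$ yields, for $\alpha\in[0,1]$,
\[
\q(x^k+\alpha d^k)=\q\bigl((1-\alpha)x^k+\alpha(x^k+d^k)\bigr)\le(1-\alpha)\q(x^k)+\alpha\q(x^k+d^k),
\]
so that $\q(x^k+\alpha d^k)-\q(x^k)\le\alpha[\q(x^k+d^k)-\q(x^k)]$. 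Adding these two estimates and using the definition \eqref{delta} of $\Delta_k$ gives
\[
F(x^k+\alpha d^k)-F(x^k)\le\alpha\Delta_k+o(\alpha).
\]

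Finally I would rearrange to obtain $F(x^k+\alpha d^k)-F(x^k)-\sigma\alpha\Delta_k\le(1-\sigma)\alpha\Delta_k+o(\alpha)$. Since $1-\sigma>0$ and $\Delta_k<0$, the leading $(1-\sigma)\alpha\Delta_k$ term is strictly negative of order $\alpha$, so it dominates the $o(\alpha)$ term for all sufficiently small $\alpha$; hence there exists $\tilde\alpha>0$ such that for every $\alpha_k\in(0,\tilde\alpha]$,
\[
F(x^k+\alpha_k d^k)\le F(x^k)+\sigma\alpha_k\Delta_k\le\max_{[k-M]^+\le i\le k}F(x^i)+\sigma\alpha_k\Delta_k,
\]
which is \eqref{reduct2}. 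The only subtlety, and the piece worth being careful about, is that we cannot invoke Lemma \ref{step-bdd}(c) directly because $f$ is not assumed to have a Lipschitz gradient here; the convexity-based bound on $\q(x^k+\alpha d^k)-\q(x^k)$ and the first-order Taylor expansion of $f$ together substitute for that stronger hypothesis.
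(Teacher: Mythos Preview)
Your proof is correct and follows essentially the same approach as the paper's: the paper obtains the key inequality $F(x^k+\alpha d^k)\le F(x^k)+\alpha\Delta_k+o(\alpha)$ by citing Lemma~2.1 of \cite{TseYun09}, whereas you derive it directly via the Taylor expansion of $f$ and the convexity bound on $\q$, and both then finish by combining $\Delta_k<0$ (from Lemmas~\ref{opt-char} and \ref{step-bdd}(a)) with $F(x^k)\le\max_{[k-M]^+\le i\le k}F(x^i)$.
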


\begin{proof}
In view of Lemma $2.1$ of \cite{TseYun09} with $J=\{1,\ldots,n\}$, $c=1$, 
$x=x^k$, and $H=H_k$, we have 
\[
\ba{lcl}
F(x^k + \alpha d^k)  &\le&   F(x^k) + \alpha \Delta_k + o(\alpha) \\ [4pt]
&\le&  \max\limits_{[k-M]^+ \le i \le k} F(x^i) + \alpha \Delta_k + o(\alpha) \ \ \
\forall \alpha \in (0,1],
\ea
\]
where $\Delta_k$ is defined in \eqnok{delta}. Using the assumption of this 
lemma, we see from Lemma \ref{opt-char} that $d^k \neq 0$, which together 
with $H_k \succ 0$ and Lemma \ref{step-bdd}(a) implies $\Delta_k <0$. The 
conclusion of this lemma immediately follows from this relation and the 
above inequality. 
\end{proof}

\vgap

The following lemma shows that the scaled search directions  $\{\alpha_k d^k\}$ 
approach zero, and the sequence of objective function values $\{F(x^k)\}$ also 
converges.  

\begin{lemma} \label{seq-converg2}
Suppose that $F$ is bounded below in $X$ and uniformly continuous in the 
level set $\cL=\{x\in X: F(x) \le F(x^0)\}$. Then, the sequence 
$\{x^k\}$ generated by the nonmonotone gradient method II satisfies 
$\lim_{k\to \infty} \alpha_k d^k =0$. Moreover, the sequence $\{F(x^k)\}$ 
converges.   
\end{lemma}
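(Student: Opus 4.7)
The plan is to mirror the proof of Lemma \ref{seq-converg1}, with one essential adjustment: because the step sizes $\alpha_k$ in nonmonotone gradient method II are no longer bounded below (step 3) may backtrack arbitrarily by powers of $\eta<1$), the correct quantity to drive to zero is the step vector $\alpha_k d^k = x^{k+1}-x^k$ rather than $d^k$ itself. The upper bound $\alpha_k \le \bar\alpha$, which holds since $\alpha_k = \alpha_k^0 \eta^j$ with $\alpha_k^0 \in [\underline\alpha,\bar\alpha]$, $\eta<1$, and $j\ge 0$, remains available and plays the key role.

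First I would introduce an index $l(k)$ with $[k-M]^+ \le l(k) \le k$ achieving $F(x^{l(k)}) = \max\{F(x^i): [k-M]^+ \le i \le k\}$. The line-search condition \eqnok{reduct2} gives $F(x^{k+1}) \le F(x^{l(k)})$, so by the definition of $l(k)$ the subsequence $\{F(x^{l(k)})\}$ is nonincreasing; being bounded below on $X$, it converges to some $F^* \in \Re$.

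Next I would prove by induction on $j \ge 1$ that
\[
\lim_{k\to\infty} \alpha_{l(k)-j}\, d^{l(k)-j} \ = \ 0 \ \ \mbox{and} \ \ \lim_{k\to\infty} F(x^{l(k)-j}) \ = \ F^*.
\]
For $j=1$, apply \eqnok{reduct2} with $k$ replaced by $l(k)-1$ to obtain $F(x^{l(k)}) \le F(x^{l(l(k)-1)}) + \sigma \alpha_{l(k)-1} \Delta_{l(k)-1}$. By Lemma \ref{step-bdd}(a) together with $H_{l(k)-1} \succeq \underline\lambda I$, we have $\Delta_{l(k)-1} \le -\underline\lambda \|d^{l(k)-1}\|^2$, so
\[
\sigma\underline\lambda\,\alpha_{l(k)-1}\|d^{l(k)-1}\|^2 \ \le \ F(x^{l(l(k)-1)}) - F(x^{l(k)}) \ \to \ 0.
\]
This is where the argument diverges from Lemma \ref{seq-converg1}: since $\alpha_{l(k)-1} \le \bar\alpha$, we get $(\alpha_{l(k)-1}\|d^{l(k)-1}\|)^2 \le \bar\alpha\,\alpha_{l(k)-1}\|d^{l(k)-1}\|^2 \to 0$, hence $\alpha_{l(k)-1} d^{l(k)-1} \to 0$. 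Uniform continuity of $F$ on $\cL$ applied to $x^{l(k)-1} = x^{l(k)} - \alpha_{l(k)-1} d^{l(k)-1}$ then yields $F(x^{l(k)-1}) \to F^*$. The inductive step from $j$ to $j+1$ is identical in structure, with $l(k)-1$ and $l(l(k)-1)$ replaced by $l(k)-j-1$ and $l(l(k)-j-1)$ throughout.

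Finally, I would pass from these subsequences to the full sequence by the same window argument as in Lemma \ref{seq-converg1}. For any $n \ge 0$, set $k = n+M+1$; then $l(k) \in [n+1, n+M+1]$, so $\alpha_n d^n = \alpha_{l(k)-j}d^{l(k)-j}$ for $j = l(k)-n \in \{1,\ldots,M+1\}$, and hence $\alpha_k d^k \to 0$. For the function values, telescope
\[
x^{l(k)} \ = \ x^{k-M-1} + \sum_{j=1}^{\bar l_k} \alpha_{l(k)-j}\,d^{l(k)-j}, \ \ \ \bar l_k = l(k)-(k-M-1) \le M+1,
\]
to conclude $x^{l(k)}-x^{k-M-1} \to 0$; uniform continuity of $F$ combined with $F(x^{l(k)}) \to F^*$ then gives $F(x^{k-M-1}) \to F^*$, and since $k-M-1$ sweeps all sufficiently large indices, $F(x^k) \to F^*$. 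The main obstacle, and the only substantive departure from the Method I argument, is extracting $\alpha_{l(k)-j} d^{l(k)-j} \to 0$ from $\alpha_{l(k)-j}\|d^{l(k)-j}\|^2 \to 0$; this is made possible precisely by the uniform upper bound $\alpha_k \le \bar\alpha$.
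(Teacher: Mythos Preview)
Your proposal is correct and follows essentially the same approach as the paper's proof: both introduce the index $l(k)$, show $\{F(x^{l(k)})\}$ converges, run an induction on $j$ to establish $\alpha_{l(k)-j}d^{l(k)-j}\to 0$ and $F(x^{l(k)-j})\to F^*$, and finish with the same window/telescoping argument borrowed from Lemma~\ref{seq-converg1}. The only cosmetic difference is that the paper packages the key step as the a~priori inequality $\alpha_k^2\|d^k\|^2 \le -2\bar\alpha\alpha_k\Delta_k/\underline\lambda$ (its \eqnok{alphad2}), whereas you derive the same bound inline via Lemma~\ref{step-bdd}(a) and $\alpha_k\le\bar\alpha$; the substance is identical.
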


\begin{proof}
Let $l(k)$ be defined in the proof of Lemma \ref{seq-converg1}. 
We first observe that $\{x^k\} \subseteq \cL$.  
Using \eqnok{delta}, the definition of $d^k$, and $H_k \succeq \underline\lambda I$, 
we have
\beq \label{delta-lbdd}
\Delta_k  = \nabla f(x^k)^T d^k + \q(x^k+d^k) - \q(x^k) \le -\frac{1}{2} 
(d^k)^T H_k d^k \le -\frac12 \underline\lambda \|d^k\|^2,
\eeq
which together with the relation $\alpha_k \le \alpha^0_k \le \bar\alpha$, 
implies that 
\beq \label{alphad2}
\alpha^2_k \|d^k\|^2 \le -2\bar\alpha\alpha_k \Delta_k/{\underline\lambda}.
\eeq  
By a similar argument as that leading to \eqnok{F-limit}, we see 
that $\{x^k\}$ satisfies \eqnok{F-limit} for some $F^*$. We next show 
by induction that the following limits hold for all $j \ge 1$: 
\beq \label{2-limits2}
\lim_{k \to \infty} \alpha_{l(k)-j} d^{l(k)-j} = 0, \ \ \ \ 
\lim_{k \to \infty} F(x^{l(k)-j}) = F^*.
\eeq
Indeed, using \eqnok{reduct2} with $k$ replaced by $l(k)-1$, we obtain 
that 
\[
F(x^{l(k)}) \le F(x^{l(l(k)-1)}) + \sigma \alpha_{l(k)-1}\Delta_{l(k)-1}.
\]
It together with \eqnok{F-limit} immediately yields 
$\lim_{k\to\infty}\alpha_{l(k)-1}\Delta_{l(k)-1} = 0$. Using this result 
and \eqnok{alphad2}, we see that the first identity of \eqnok{2-limits2} 
holds for $j=1$. Further, in view of this identity, \eqnok{F-limit}, and 
uniform continuity of $F$ in $\cL$, we can easily see that the second 
identity of \eqnok{2-limits2} also holds $j=1$. We now need to show that if 
\eqnok{2-limits2} holds for $j$, then it also holds for $j+1$. First, it 
follows from \eqnok{reduct2} that
\[
F(x^{l(k)-j}) \le F(x^{l(l(k)-j-1)})+ \sigma \alpha_{l(k)-j-1}\Delta_{l(k)-j-1},
\]
which together with \eqnok{F-limit} and the induction assumption 
that $\lim_{k \to \infty} F(x^{l(k)-j}) = F^*$, yields $\lim_{k \to \infty} 
\alpha_{l(k)-j-1} \Delta_{l(k)-j-1} = 0$. Using this result and \eqnok{alphad2}, 
we have $\lim_{k \to \infty} \alpha_{l(k)-j-1} d^{l(k)-j-1} = 0$.  In view of this 
identity, uniform continuity of $F$ in $\cL$ and the induction assumption 
$\lim_{k \to \infty} F(x^{l(k)-j}) = F^*$, we can easily show that 
$\lim_{k \to \infty} F(x^{l(k)-j-1}) = F^*$. Hence, \eqnok{2-limits2} holds for 
$j+1$. The conclusion of this lemma then follows from \eqnok{2-limits2} and 
a similar argument as that in the proof of Lemma \ref{seq-converg2}. 
\end{proof}

\vgap

We are now ready to show that the nonmonotone gradient method II is 
globally convergent.

\begin{theorem} \label{converg-2}
Suppose that $F$ is bounded below in $X$ and uniformly continuous in the 
level set $\cL=\{x\in X: F(x) \le F(x^0)\}$. Then, any accumulation point 
of the sequence $\{x^k\}$ generated by the nonmonotone gradient method 
II is a stationary point of \eqnok{nonsmooth}.  
\end{theorem}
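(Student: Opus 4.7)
The plan is to argue by contradiction, paralleling the strategy of Theorem~\ref{converg-1} but adapted to the fact that in method II the step size $\alpha_k$ multiplies the direction $d^k$ rather than the Hessian. Assume $x^*$ is a nonstationary accumulation point with $\{x^k\}_{k\in K}\to x^*$. By Lemma~\ref{seq-converg2} one has $\alpha_k d^k \to 0$, and I would aim to contradict Lemma~\ref{opt-char} applied at $x^*$.

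First I would extract convergent subsequences of $\{H_k\}_{k\in K}$ and $\{d^k\}_{k\in K}$. Boundedness of $\{H_k\}$ is immediate from $\underline\lambda I \preceq H_k \preceq \bar\lambda I$. For $\{d^k\}_{k\in K}$, testing $d=0$ in the definition \eqnok{dir} gives $\Delta_k \le -\frac{1}{2}(d^k)^T H_k d^k \le -\frac{1}{2}\underline\lambda\|d^k\|^2$; on the other hand, convexity of $\q$ and boundedness of $\bigcup_{k\in K}\partial \q(x^k)$ on the bounded set $\{x^k\}_{k\in K}$ yield $\Delta_k \ge -C\|d^k\|$ for some constant $C>0$. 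Combining shows $\{d^k\}_{k\in K}$ is bounded, so after passing to a subsequence I may assume $d^k\to d^*$ and $H_k \to H^*$ with $\underline\lambda I \preceq H^* \preceq \bar\lambda I$. Taking the limit in the first-order condition of \eqnok{dir} and using semicontinuity of $\partial \q$ and $N_X$ (Theorem 24.4 of \cite{Rock70} and Lemma 2.42 of \cite{Rus06}) identifies $d^*=d_{H^*}(x^*)$.

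If $\liminf_{k\in K}\alpha_k > 0$, then $\alpha_k d^k \to 0$ forces $d^*=0$, so $d_{H^*}(x^*)=0$ and Lemma~\ref{opt-char} contradicts nonstationarity of $x^*$. The main obstacle is the remaining case in which $\alpha_k \to 0$ along a subsequence; here the argument rests on an Armijo-failure analysis which is more delicate than in the smooth setting because of the nonsmooth term $\q$. Since $\alpha^0_k \ge \underline\alpha$, for $k$ large the trial step $\tilde\alpha_k := \alpha_k/\eta$ must have been rejected, so
\[
F(x^k+\tilde\alpha_k d^k) - F(x^k) \;>\; \sigma\,\tilde\alpha_k\,\Delta_k,
\]
using $F(x^k) \le \max_{[k-M]^+\le i \le k}F(x^i)$. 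A mean-value expansion of $f$ gives $[f(x^k+\tilde\alpha_k d^k) - f(x^k)]/\tilde\alpha_k \to \nabla f(x^*)^Td^*$, while the key nonsmooth inequality is the convexity bound
\[
\q(x^k+\tilde\alpha_k d^k) - \q(x^k) \;\le\; \tilde\alpha_k\,[\,\q(x^k+d^k) - \q(x^k)\,] \qquad \text{for } \tilde\alpha_k\in(0,1],
\]
which after dividing by $\tilde\alpha_k$ and using continuity of $\q$ yields $\limsup_k [\q(x^k+\tilde\alpha_k d^k) - \q(x^k)]/\tilde\alpha_k \le \q(x^*+d^*)-\q(x^*)$. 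Setting $\Delta^* := \nabla f(x^*)^Td^* + \q(x^*+d^*)-\q(x^*)$, I obtain $\limsup_k[F(x^k+\tilde\alpha_k d^k)-F(x^k)]/\tilde\alpha_k \le \Delta^*$ and $\Delta_k \to \Delta^*$ by continuity, so the failed-step inequality passes to the limit as $\Delta^* \ge \sigma\Delta^*$, i.e.\ $(1-\sigma)\Delta^* \ge 0$. Since $d^*=d_{H^*}(x^*)$ and nonstationarity of $x^*$ forces $d^*\ne 0$ by Lemma~\ref{opt-char}, Lemma~\ref{step-bdd}(a) gives $\Delta^* \le -\underline\lambda\|d^*\|^2 < 0$, the desired contradiction.
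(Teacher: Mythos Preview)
Your proof is correct and follows essentially the same contradiction strategy as the paper: use Lemma~\ref{seq-converg2} to get $\alpha_kd^k\to0$, bound $\|d^k\|$, and then run an Armijo-failure analysis with the convexity inequality $\q(x^k+\tilde\alpha_kd^k)-\q(x^k)\le\tilde\alpha_k[\q(x^k+d^k)-\q(x^k)]$ when $\alpha_k\to0$. Your presentation differs only in organization---you explicitly extract limits $H_k\to H^*$, $d^k\to d^*$ and identify $d^*=d_{H^*}(x^*)$ via the optimality condition of \eqnok{dir}, which makes the two cases (``$\liminf\alpha_k>0$'' versus ``$\alpha_k\to0$'') slightly more transparent, whereas the paper separately argues $\liminf\|d^k\|>0$ and $\limsup\Delta_k<0$ before reaching the Armijo-failure step.
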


\begin{proof}
Suppose for contradiction that $x^*$ is an accumulation point of $\{x^k\}$ 
that is a nonstationary point of \eqnok{nonsmooth}. Let $K$ be the 
subsequence such that $\{x^k\}_{k\in K} \to x^*$. We first
claim that $\liminf_{k\in K, k\to\infty} \|d^k\| > 0$. Suppose not. By passing 
to a subsequence if necessary, we can assume that $\{\|d^k\|\}_{k\in K} \to 0$. 
Invoking that $d^k$ is the optimal solution of \eqnok{dir} with 
$x=x^k$ and $H=H_k$, we have 
\[
0 \in \nabla f(x^k) + H_k d^k + \partial \q(x^k+d^k) + N_X(x^k+d^k)  \ \ \
\forall k \in K.
\]
Upon taking limits on both sides as $k \in K \to \infty$, and using semicontinuity 
of $\partial \q(\cdot)$ and $N_X(\cdot)$ (see Theorem $24.4$ of \cite{Rock70} and 
Lemma $2.42$ of \cite{Rus06}) the relations 
$\underline\lambda I \preceq H_k \preceq \bar\lambda I$, 
$\{\|d^k\|\}_{k\in K} \to 0$ and $\{x^k\}_{k \in K} \to x^*$, we see that 
\eqnok{station} holds at $x^*$, which contradicts the nonstationarity of $x^*$. 
Thus, $\liminf_{k\in K, k\to\infty} \|d^k\| > 0$ holds. Further, using a similar 
argument as that leading to \eqnok{bound-ad}, we have
\[
\|d^k\|  \le  -\frac{2F'(x^k,d^k/\|d^k\|)}{\lambdam(H_k)} \ \ \ \forall k \in K,
\]
which together with $\{x^k\}_{k\in K} \to x^*$, $H_k \succeq \underline\lambda I$ 
and $\liminf_{k\in K, k\to\infty} \|d^k\| > 0$, implies that 
$\|d^k\| = \Theta(1)$ for $k\in K$. Further, using \eqnok{delta-lbdd}, we see that 
$\lim\sup_{k\in K, k\to\infty} \Delta_k < 0$. Now, it follows from Lemma \ref{seq-converg2} 
and the relation $\liminf_{k\in K, k\to\infty} \|d^k\| > 0$ that 
$\{\alpha_k\}_{k\in K} \to 0$. Since $\alpha^0_k \ge \underline \alpha >0$, there 
exists some index $\bar k \ge 0$ such that $\alpha_k < \alpha^0_k$ and 
$\alpha_k < \eta$ for all $k\in K$ with $k \ge \bar k$. Let $\bar\alpha_k = \alpha_k/\eta$. 
Then, $\{\bar\alpha_k\}_{k\in K} \to 0$ and $0 < \bar\alpha_k \le 1$ for all $k\in K$. 
By the stepsize rule used in step (3), we have, for all $k\in K$ with 
$k \ge \bar k$, 
\beq \label{low-bdd2}
 F(x^k + \bar\alpha_k d^k) > \ \max_{[k-M]^+ \le i \le k} F(x^i) + 
           \sigma \bar\alpha_k \Delta_k,
\eeq
On the other hand, in view of the definition of $F$, \eqnok{delta}, the relations 
$\|d^k\|=\Theta(1)$ and $\lim\sup_{k\in K, k\to\infty} \Delta_k < 0$, and 
the monotonicity of $(\q(x^k + \alpha d^k) - \q(x^k))/\alpha$, we obtain that, for sufficiently 
large $k\in K$,
\beqas
F(x^k + \bar\alpha_k d^k) &=&  f(x^k + \bar\alpha_k d^k) + \q(x^k + \bar\alpha_k d^k) \\ [4pt]
&=& f(x^k + \bar\alpha_k d^k) - f(x^k) + \q(x^k + \bar\alpha_k d^k) - \q(x^k) + F(x^k) \\ [4pt]
& =& \bar\alpha_k \nabla f(x^k)^T d^k + o(\bar\alpha_k\|d^k\|) + 
\q(x^k + \bar\alpha_k d^k) - \q(x^k) + F(x^k) \\ [4pt]
& \le & \bar\alpha_k \nabla f(x^k)^T d^k + o(\bar\alpha_k) + \bar\alpha_k[\q(x^k + d^k) - \q(x^k)] 
+ \max_{[k-M]^+ \le i \le k} F(x^i) \\ [4pt]
&=& \max_{[k-M]^+ \le i \le k} F(x^i) + \bar\alpha_k \Delta_k + o(\bar\alpha_k) \\ [4pt]
& < & \max_{[k-M]^+ \le i \le k} F(x^i) + \sigma \bar\alpha_k \Delta_k, 
\eeqas  
which clearly contradicts \eqnok{low-bdd2}. Therefore, the conclusion of this theorem 
holds.
\end{proof}

\vgap

We next establish local linear rate of convergence for the nonmonotone 
gradient method II described above. The proof of the following theorem is 
inspired by the work of Tseng and Yun \cite{TseYun09}.

\begin{theorem} \label{lconverg2}
Suppose that $\bar\alpha \le 1$, $f$ satisfies \eqnok{lip-cond}, and $F$ 
is bounded below in $X$ and uniformly continuous in the level set 
$\cL=\{x\in X: F(x) \le F(x^0)\}$. Then, the sequence $\{x^k\}$ generated by 
the nonmonotone gradient method II satisfies 
\[
F(x^{l(k)}) - F^* \ \le \ c (F(x^{l(l(k))-1)}-F^*)
\] 
provided $k$ is sufficiently large, where $F^* = \lim_{k \to \infty} F(x^k)$ 
(see Lemma \ref{seq-converg2}), and $c$ is some constant in $(0,1)$.
\end{theorem}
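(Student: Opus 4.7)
The plan is to follow the proof of Theorem \ref{lconverg1} closely, making adjustments for the two structural differences in method II: the direction $d^k = d_{H_k}(x^k)$ does not carry the step scaling $\alpha_k$, and the line search backtracks in $\alpha_k$ on the full step $\alpha_k d^k$. The hypothesis $\bar\alpha \le 1$ plays an essential role in keeping the key identity from Lemma \ref{step-bdd}(b), which requires $\alpha \in (0,1]$, applicable. The first step is to establish a uniform lower bound $\alpha_k \ge \underline\alpha^* > 0$: Lemma \ref{step-bdd}(c) applied with $H = H_k \succeq \underline\lambda I$ guarantees the descent inequality $F(x^k+\alpha d^k) \le F(x^k) + \sigma\alpha\Delta_k$ whenever $0 \le \alpha \le \min\{1, 2(1-\sigma)\underline\lambda/L\}$, and since this is strictly stronger than \eqnok{reduct2}, the backtracking in step 3) must terminate with $\alpha_k \ge \underline\alpha^* := \min\{\underline\alpha, \eta\min\{1,2(1-\sigma)\underline\lambda/L\}\}$. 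Combined with $\alpha_k d^k \to 0$ from Lemma \ref{seq-converg2}, this forces $d^k \to 0$, and Lemma \ref{dchg-H} then yields $\|d_I(x^k)\| = \Theta(\|d^k\|)$, hence $d_I(x^k) \to 0$. From here, the arguments from the proof of Theorem \ref{lconverg1} that invoke Assumption \ref{assump}(a) to produce $\bx^{l(k)-1} \in \bar X$ with $\|x^{l(k)-1} - \bx^{l(k)-1}\| \le c_1\|d^{l(k)-1}\|$, that show $\|\bx^{l(k+1)-1} - \bx^{l(k)-1}\| \to 0$, and that combine Assumption \ref{assump}(b) with Lemma 5.1 of \cite{TseYun09} to produce a common value $v = F(\bx^{l(k)-1})$ satisfying $v \le F^*$, carry over with only cosmetic changes (iterate differences are $\alpha_i d^i$ rather than $d^i$, but still tend to zero).

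The heart of the proof is the bound on $F(x^{l(k)}) - v$. Mimicking the decomposition from Theorem \ref{lconverg1}, I would write
\[
F(x^{l(k)}) - v = (\nabla f(\tx^k)-\nabla f(x^{l(k)-1}))^T(x^{l(k)}-\bx^{l(k)-1}) - (H_{l(k)-1}d^{l(k)-1})^T(x^{l(k)}-\bx^{l(k)-1}) + R_k,
\]
where $R_k$ is the bracketed quantity that Lemma \ref{step-bdd}(b) bounds with $H = H_{l(k)-1}$ and $\alpha = \alpha_{l(k)-1}$ (this application being valid precisely because $\alpha_{l(k)-1} \le \bar\alpha \le 1$). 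The identity $x^{l(k)} = x^{l(k)-1} + \alpha_{l(k)-1}d^{l(k)-1}$ combined with $\bar\alpha \le 1$ bounds both $\|x^{l(k)} - \bx^{l(k)-1}\|$ and $\|\tx^k - x^{l(k)-1}\|$ by $(1+c_1)\|d^{l(k)-1}\|$, so \eqnok{lip-cond} and $H_k \preceq \bar\lambda I$ make the first two displayed terms $O(\|d^{l(k)-1}\|^2)$. For $R_k$, Lemma \ref{step-bdd}(b) gives $R_k \le (\alpha_{l(k)-1}-1)\bigl((d^{l(k)-1})^T H_{l(k)-1} d^{l(k)-1} + \Delta_{l(k)-1}\bigr)$, and using $\alpha_{l(k)-1} \le 1$, the non-negativity of $(d^{l(k)-1})^T H_{l(k)-1} d^{l(k)-1}$, and Lemma \ref{step-bdd}(a), this simplifies to $R_k \le -\Delta_{l(k)-1}$. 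Invoking Lemma \ref{step-bdd}(a) once more to write $\|d^{l(k)-1}\|^2 \le -\Delta_{l(k)-1}/\underline\lambda$ then yields $F(x^{l(k)}) - v \le -c_2\Delta_{l(k)-1}$ for some constant $c_2 > 0$.

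The endgame is routine: \eqnok{reduct2} combined with $\alpha_{l(k)-1} \ge \underline\alpha^*$ yields $-\Delta_{l(k)-1} \le (F(x^{l(l(k)-1)})-F(x^{l(k)}))/(\sigma\underline\alpha^*)$, and substituting into the previous bound and setting $c_3 = c_2/(\sigma\underline\alpha^*)$ gives $F(x^{l(k)}) - v \le c_3(F(x^{l(l(k)-1)}) - F(x^{l(k)}))$; taking $k \to \infty$ forces $v = F^*$, and rearranging produces the claimed linear rate with $c = c_3/(1+c_3) \in (0,1)$. The main obstacle is the handling of $R_k$: in method I the analogous quantity vanishes identically because the full direction is taken ($\alpha = 1$ in Lemma \ref{step-bdd}(b)), whereas here $R_k$ is only favorably signed and must be bounded by $-\Delta_{l(k)-1}$ in magnitude, which requires both halves of Lemma \ref{step-bdd} in conjunction with $\bar\alpha \le 1$.
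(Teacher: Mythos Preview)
Your proposal is correct and follows essentially the same route as the paper's proof: establishing $\inf_k \alpha_k > 0$ via Lemma \ref{step-bdd}(c), deducing $d^k \to 0$ and $\|d_I(x^k)\| = \Theta(\|d^k\|)$, importing the error-bound and constant-value arguments from Theorem \ref{lconverg1}, and then applying the decomposition together with Lemma \ref{step-bdd}(b) (using $\alpha_{l(k)-1} \le \bar\alpha \le 1$) to obtain $F(x^{l(k)}) - v \le -c_2 \Delta_{l(k)-1}$. Your explicit handling of the extra term $(\alpha_{l(k)-1}-1)\bigl((d^{l(k)-1})^T H_{l(k)-1} d^{l(k)-1} + \Delta_{l(k)-1}\bigr)$ via Lemma \ref{step-bdd}(a) and of the $\underline\alpha^*$ factor in the endgame simply spells out what the paper leaves implicit in ``the remaining proof follows similarly.''
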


\begin{proof}
Since $\alpha_k$ is chosen by the Armijo rule with $\alpha^0_k \ge \underline\alpha >0$, 
we see from Lemma \ref{step-bdd}(c) that $\inf_k \alpha_k > 0$. It together 
with Lemma \ref{seq-converg2} implies that $\{d^k\} \to 0$. 
Further, using Lemma \ref{dchg-H} and the fact that $d^k=d_{H_k}(x^k)$ and 
$\underline\lambda I \preceq H_k \preceq \bar\lambda I$, we obtain that 
$\|d_I(x^k)\| = \Theta(\|d^k\|)$, and hence $\{d_I(x^k)\} \to 0$. 
Then, by a similar argument as that in the proof of Theorem \ref{lconverg1}, there exist 
$c_1 >0$, $v\in\Re$, and $\bar x^{l(k)-1}\in\bar X$ such that 
\[
\|x^{l(k)-1}-\bar x^{l(k)-1}\| \le c_1 \|d^{l(k)-1}\|, \ \ \ 
F(\bar x^{l(k)-1})= v \ \ \ \forall k \ge \hat k,
\]
where $\hat k$ is some index. 
Then, by Lemma 5.1 of \cite{TseYun09}, we see that \eqnok{liminf-F} holds for 
$\{x^k\}$, and the above $F^*$ and $v$. 
Further, using the definition of $F$, \eqnok{lip-cond}, Lemma \ref{step-bdd}(b), and 
$\underline\lambda I \preceq H_k \preceq \bar\lambda I$, we have, 
for $k \ge \hat k$, 
\beqa
F(x^{l(k)}) - v &=& f(x^{l(k)}) +\q(x^{l(k)}) - f(\bx^{l(k)-1}) - \q(\bx^{l(k)-1}) 
\nn \\ [4pt]
&=& \nabla f(\tx^k)^T(x^{l(k)}-\bx^{l(k)-1}) +\q(x^{l(k)}) - \q(\bx^{l(k)-1}) \nn \\ [4pt]
&=& (\nabla f(\tx^k)-\nabla f(x^{l(k)-1})^T(x^{l(k)}-\bx^{l(k)-1}) - 
(H_{l(k)-1}d^{l(k)-1})^T(x^{l(k)}-\bx^{l(k)-1}) \nn \\ [4pt]
& & + \left[(\nabla f(x^{l(k)-1})+H_{l(k)-1}d^{l(k)-1})^T(x^{l(k)}-\bx^{l(k)-1}) 
+\q(x^{l(k)}) - \q(\bx^{l(k)-1})\right] \nn \\ [4pt]
&\le & L \|\tx^k - x^{l(k)-1}\| \|x^{l(k)}-\bx^{l(k)-1}\| + \bar\lambda \|d^{l(k)-1}\| 
\|x^{l(k)}-\bx^{l(k)-1}\| \nn \\ [4pt] 
& &+ (\alpha_{l(k)-1}-1)\left[(d^{l(k)-1})^TH_{l(k)-1}d^{l(k)-1}+\Delta_{l(k)-1}\right], 
\label{loc-bdd2-1}
\eeqa
where $\tx^k$ is some point lying on the segment joining $x^{l(k)}$ with $\bx^{l(k)-1}$. 
It follows from \eqnok{diff-bdd} and $\alpha_k \le 1$ that, for $k \ge \hat k$,  
\[
\|\tx^k - x^{l(k)-1}\|   \le  \|x^{l(k)}-x^{l(k)-1}\| + \|x^{l(k)-1}-\bx^{l(k)-1}\| 
\le (1+c_1) \|d^{l(k)-1}\|.
\]
Similarly, $\|x^{l(k)}-\bx^{l(k)-1}\| \le (1+c_1) \|d^{l(k)-1}\|$ for $k \ge \hat k$. 
Using these inequalities, Lemma \ref{step-bdd}(a), $H_k \succeq \underline \lambda I$, 
$\alpha_k \le 1$, and \eqnok{loc-bdd2-1}, we see that, for $k \ge \hat k$,  
\[
F(x^{l(k)}) - v \ \le \ -c_2 \Delta_{l(k)-1}
\]  
for some constant $c_2 > 0$. The remaining proof follows similarly as that of Theorem 
\ref{lconverg1}.
\end{proof}

\section{Augmented Lagrangian method for sparse PCA} 
\label{aug-spca}

In this section we discuss the applicability and implementation details of the
augmented Lagrangian method proposed in Section \ref{aug-nonsmooth} for solving 
sparse PCA \eqnok{diag-approx}. 

\subsection{Applicability of augmented Lagrangian method for \eqnok{diag-approx}}
\label{suitability}

We first observe that problem \eqnok{diag-approx} can be reformulated as
\beq \label{smooth-form}
\ba{rl}
\min\limits_{V \in \Re^{n \times r}} & -\tr(V^T \hsigma V) + \rho \bt |V| \\ [4pt]
\mbox{s.t.} &   V^T_i \hsigma V_j \le \Delta_{ij}  \  \  \ \ \forall i \neq j, \\ [5pt]
& -V^T_i \hsigma V_j \le \Delta_{ij}  \  \  \ \ \forall i \neq j, \\ [5pt]
& V^T V = I.   
\ea
\eeq 
Clearly, problem \eqnok{smooth-form} has the same form as \eqnok{nlp}. From 
Subsection \ref{aug-method}, we know that the sufficient conditions for convergence 
of our augmented Lagrangian method include: i) a feasible point is explicitly given; 
and ii) Robinson's condition \eqnok{rob-cond} holds at an accumulation point. It is 
easy to observe that any $V\in\Re^{n \times r}$ consisting of $r$ 
orthonormal eigenvectors of $\hsigma$ is a feasible point of \eqnok{smooth-form}, 
and thus the first condition is trivially satisfied. Given that the accumulation 
points are not known beforehand, it is hard to check the second condition directly. 
Instead, we may check Robinson's condition at all feasible points of \eqnok{smooth-form}. 
However, due to complication of the constraints, we are only able to verify Robinson's 
condition at a set of feasible points below. Before proceeding, we establish a technical 
lemma as follows that will be used subsequently. 

\begin{lemma} \label{grad-sys}
Let $V\in\Re^{n \times r}$ be a feasible solution of \eqnok{smooth-form}. Given 
any $W_1$, $W_2 \in \cS^r$, the system of
\beqa
\dV^T\hsigma V + V^T \hsigma \ \dV +\dD &=& W_1, \label{constr-1} \\
\dV^T V + V^T \dV &=& W_2 \label{constr-2}
\eeqa
has at least one solution $(\dV,\dD) \in \Re^{n\times r} \times \cD^r$ if one of 
the following conditions holds:
\bi
\item[a)] $V^T\hsigma V$ is diagonal and $V^T_i \hsigma V_i  \neq V^T_j\hsigma V_j$ 
for all $i\neq j$;   
\item[b)] $V^T\hsigma(I-VV^T)\hsigma V$ is nonsingular. 
\ei
\end{lemma}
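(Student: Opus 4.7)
My plan is to reduce the linear system \eqnok{constr-1}--\eqnok{constr-2} to equations on much smaller matrices by decomposing $\dV$ along $\range(V)$ and its orthogonal complement. Choose $V^\perp\in\Re^{n\times(n-r)}$ with orthonormal columns spanning the orthogonal complement of $\range(V)$, and write $\dV = VA + V^\perp B$ with $A\in\Re^{r\times r}$ and $B\in\Re^{(n-r)\times r}$. Since $V^TV=I$ and $V^TV^\perp=0$, we have $A=V^T\dV$, so \eqnok{constr-2} collapses to $A^T+A=W_2$, whose general solution is $A=\tfrac12 W_2 + S$ with $S\in\Re^{r\times r}$ antisymmetric. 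Setting $M:=V^T\hsigma V$ and $H:=V^T\hsigma V^\perp$, direct expansion turns \eqnok{constr-1} into
\[
A^T M + MA + HB + B^T H^T + \dD = W_1.
\]
Since $\dD$ is free on the diagonal and every other term is symmetric, the essential task is to match the off-diagonal part of $W_1$; the diagonal then fixes $\dD$ automatically.

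Under condition (a), I would set $B=0$ and exploit that $M=\Diag(m_1,\ldots,m_r)$ has pairwise distinct diagonal entries. Reading off the $(i,j)$-entry with $i\neq j$ and $A=\tfrac12 W_2+S$ gives
\[
(m_i-m_j)\,S_{ij} \;=\; (W_1)_{ij}-\tfrac12(m_i+m_j)(W_2)_{ij},
\]
which determines $S_{ij}$ uniquely (with $S_{ii}=0$ forced by antisymmetry). The $(i,i)$ entries then prescribe $\dD_{ii}=(W_1)_{ii}-m_i(W_2)_{ii}$, producing an explicit solution.

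Under condition (b), the hypothesis that $V^T\hsigma(I-VV^T)\hsigma V = HH^T$ is nonsingular is equivalent to $H\in\Re^{r\times(n-r)}$ having full row rank $r$, which in turn is equivalent to the linear map $B\mapsto HB$ from $\Re^{(n-r)\times r}$ to $\Re^{r\times r}$ being surjective (solve column by column). I would then take $A=\tfrac12 W_2$ (so $S=0$) and $\dD=0$, reducing the task to finding $B$ with
\[
HB + B^T H^T \;=\; W_1-\tfrac12(W_2M+MW_2).
\]
The right-hand side is symmetric, so it suffices to set $HB = \tfrac12\bigl[W_1-\tfrac12(W_2M+MW_2)\bigr]$, which is solvable by the surjectivity just noted.

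The main obstacle is purely the bookkeeping: recognising that the diagonal freedom in $\dD$ removes all constraints on the diagonal of \eqnok{constr-1}, and then dispatching the off-diagonal entries by exactly one of two independent mechanisms -- the Sylvester-type shift $m_i-m_j\neq 0$ in case (a), or the rank of $H$ in case (b). Once the decomposition $\dV = VA + V^\perp B$ is in hand, each case reduces to a one-line linear algebra argument.
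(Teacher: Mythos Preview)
Your proposal is correct and follows essentially the same route as the paper: the paper writes $\dV = V\dP + \bV\dbP$ with $[V\ \bV]$ orthogonal (your $VA+V^\perp B$), reduces to the same pair of equations in $(\dP,\dbP,\dD)$, and then in case~(a) sets $\dbP=0$ and solves the off-diagonal Sylvester-type equations entrywise, while in case~(b) it sets $\dP=W_2/2$, $\dD=0$ and exploits nonsingularity of $\bG^T\bG=HH^T$ to solve for the orthogonal-complement piece. Your parametrisation $A=\tfrac12 W_2+S$ and direct appeal to surjectivity of $B\mapsto HB$ are cosmetic variants of exactly the same argument.
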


\begin{proof}
Note that the columns of $V$ consist of $r$ orthonormal eigenvectors. Therefore, 
there exist $\bV \in\Re^{n\times (n-r)}$ such that $[V \ \bV]\in\Re^{n \times n}$ 
is an orthogonal matrix. It follows that for any $\dV \in \Re^{n \times r}$, 
there exists $\dP\in \Re^{r \times r}$ and $\dbP \in\Re^{(n-r) \times r}$ such that 
$\dV = V\dP+\bV \dbP$. Performing such a change of variable for $\dV$, 
and using the fact that the matrix $[V \ \bV]$ is orthogonal, we can show that 
the system of \eqnok{constr-1} and \eqnok{constr-2} is equivalent to
\beqa
\dP^T G  + G \dP + \dbP^T \bG + \bG^T \dbP + \dD &=& W_1, 
\label{constr-11} \\
\dP^T + \dP &=& W_2 \label{constr-21},
\eeqa
where $G=V^T\hsigma V$ and $\bG=\bV^T\hsigma V$. The remaining proof of 
this lemma reduces to show that the system of \eqnok{constr-11} and 
\eqnok{constr-21} has at least a solution $(\dP,\dbP,\dD)\in \Re^{r \times r} 
\times \Re^{(n-r) \times r} \times \cD^r$  if one of conditions (a) or 
(b) holds. 

First, we assume that condition (a) holds. Then, $G$ is a diagonal matrix 
and $G_{ii} \neq G_{jj}$ for all $i \neq j$. It follows that there exists a 
unique $\dP^*\in\Re^{n \times r}$ satisfying $\dP_{ii}=(W_2)_{ii}/2$ for 
all $i$ and 
\[
\ba{rcllcl}
\dP_{ij} G_{jj} &+& G_{ii} \dP_{ij} &=& (W_1)_{ij} \ \ \ \ 
\forall i \neq j, \\
\dP_{ij} &+& \dP_{ji} &=& (W_2)_{ij} \ \ \ \ 
\forall i \neq j.   
\ea
\] 
Now, let $\dbP^*=0$ and $\dD^* = \widetilde \Diag(W_1-GW_2)$. 
It is easy to verify that $(\dP^*,\dbP^*,\dD^*)$ is a solution of 
the system of \eqnok{constr-11} and \eqnok{constr-21}. 

We next assume that condition (b) holds. Given any $\dbP\in\Re^{(n-r) 
\times r}$, there exist $\dY\in\Re^{(n-r) \times r}$ and $\dZ\in \Re^{r \times r}$ 
such that $\bG^T\dY=0$ and $\dbP=\dY+\bG \dZ$. Peforming such a change of variable 
for $\dbP$, we see that \eqnok{constr-11} can be rewritten as 
\beq \label{constr-12}
\dP^T G  + G \dP + \dZ^T \bG^T\bG + \bG^T\bG \dZ + \dD = W_1. 
\eeq
Thus, it suffices to show that the system of \eqnok{constr-21} and \eqnok{constr-12} 
 has at least a solution $(\dP,\dZ,\dD)\in \Re^{r \times r} \times \Re^{r \times r} 
\times \cD^r$. Using the definition of $\bG$ and the fact that the matrix $[V \ \bV]$ 
is orthogonal, we see that 
\[
\bG^T \bG = V^T\hsigma \bV\bV^T\hsigma V  = V^T\hsigma(I-VV^T)\hsigma V,
\] 
which together with condition (b) implies that $\bG^T\bG$ is nonsingular. Now, let 
\[
\dP^*=W_2/2,  \ \ \ \dZ^*=(\bG^T\bG)^{-1}(2W_1-W_2G-GW_2)/4, 
\ \ \ \dD^*=0.  
\]
It is easy to verify that $(\dP^*,\dZ^*,\dD^*)$ is a solution of the system of 
\eqnok{constr-12} and \eqnok{constr-21}. Therefore, the conclusion holds.
\end{proof}   

\gap

We are now ready to show that Robinson's condition \eqnok{rob-cond} holds at a 
set of feasible points of \eqnok{smooth-form}. 

\begin{proposition} \label{constr-cond} 
Let $V\in\Re^{n \times r}$ be a feasible solution of \eqnok{smooth-form}. The 
Robinson's condition \eqnok{rob-cond} holds at $V$ if one of the following 
conditions hold:
\bi
\item[a)] $\Delta_{ij}=0$ and $V^T_i \hsigma V_i  \neq V^T_j\hsigma V_j$ for 
all $i\neq j$;
\item[b)] There is at least one active and one inactive inequality constraint 
of \eqnok{smooth-form} at $V$, and $V^T\hsigma(I-VV^T)\hsigma V$ is nonsingular;
\item[c)] All inequality constraints of \eqnok{smooth-form} are inactive at $V$.
\ei
\end{proposition}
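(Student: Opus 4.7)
The plan is to verify Robinson's condition \eqnok{rob-cond} case by case, reducing each case to the solvability of the linearized system handled by Lemma \ref{grad-sys}. Since the set constraint $X$ for \eqnok{smooth-form} is the ambient space $\Re^{n\times r}$, we have $T_X(V) = \Re^{n\times r}$, so Robinson's condition reduces to the assertion that for every target $(\gamma, W_2) \in \Re^m \times \cS^r$ there is a direction $\dV \in \Re^{n\times r}$ satisfying $\dV^T V + V^T \dV = W_2$ together with $g'_i(V)\dV \le \gamma_i$ for each $i \in \cA(V)$; the slacks $v_i \le 0$ are then recovered from the equalities, and $v_i$ for inactive $i$ is unrestricted and trivially adjusted.

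Case (c) is immediate: no inequality is active, so only the equality constraint survives, and the choice $\dV = \tfrac12 V W_2$ satisfies $\dV^T V + V^T \dV = W_2$ by $V^TV = I$.

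For cases (a) and (b), I would use the observation that the linearization of the inequality $\pm(V_i^T \hsigma V_j - \Delta_{ij}) \le 0$ at $V$ in direction $\dV$ is $\pm L_{ij}(\dV)$ with $L_{ij}(\dV) := \dV_i^T\hsigma V_j + V_i^T \hsigma\dV_j$, which is the $(i,j)$ off-diagonal entry of $\pm(\dV^T\hsigma V + V^T\hsigma \dV)$. I would assemble a symmetric matrix $W_1 \in \cS^r$ whose off-diagonal entries realize the targets coming from the active linearized inequalities (and take arbitrary values on the unused slots), and then invoke Lemma \ref{grad-sys}: part (a) in case (a), because $\Delta_{ij}=0$ together with feasibility forces $V^T\hsigma V$ to be diagonal and $V_i^T\hsigma V_i \neq V_j^T\hsigma V_j$ provides the distinct-diagonal hypothesis; and part (b) in case (b), where the nonsingularity of $V^T\hsigma(I-VV^T)\hsigma V$ is exactly the required hypothesis. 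The lemma produces $(\dV,\dD)$ with $\dV^T\hsigma V + V^T\hsigma\dV + \dD = W_1$ and $\dV^T V + V^T\dV = W_2$; since $\dD$ is diagonal, the off-diagonals of $\dV^T\hsigma V + V^T\hsigma\dV$ match those of $W_1$, which discharges the active linearized inequality requirements and delivers the desired $\dV$.

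The main obstacle I anticipate is the bookkeeping in case (b): for each unordered pair $\{i,j\}$ the active set may contain only $g^+_{ij}$, only $g^-_{ij}$, both (forcing $\Delta_{ij}=0$), or neither, and one must choose the $(i,j)$ entry of $W_1$ so that the single scalar $L_{ij}(\dV)$ is compatible with whatever combination of one-sided targets is imposed there, while ensuring the slack $\dD$ and the unused diagonal of $W_1$ absorb the fact that the diagonal of $V^T\hsigma V$ is unconstrained in \eqnok{smooth-form}. The hypothesis that at least one inequality is active and one is inactive will be used precisely to avoid the degenerate configuration handled separately in case (a), so that the off-diagonal slots in $W_1$ can be filled consistently before applying Lemma \ref{grad-sys}(b).
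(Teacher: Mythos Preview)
Your proposal is correct and follows essentially the same route as the paper: case~(c) is handled directly with $\dV=\tfrac12 V W_2$, and cases~(a) and~(b) are reduced to Lemma~\ref{grad-sys} parts~(a) and~(b) respectively, exactly as you outline. One minor point: in the paper's proof of case~(b) only the nonsingularity hypothesis is actually invoked---the ``at least one active and one inactive'' clause plays no role in the argument and is not used to rule out any degenerate configuration as you anticipate in your final paragraph; the paper simply applies Lemma~\ref{grad-sys}(b) directly once nonsingularity is assumed.
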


\begin{proof}
We first suppose that condition (a) holds. Then, it immediately implies that 
$V^T\hsigma V$ is diagonal, and hence the condition (a) of Lemma \ref{grad-sys} 
holds.  In addition, we observe that all constraints of \eqnok{smooth-form} 
become equality ones. Using these facts and Lemma \ref{grad-sys}, 
we see that Robinson's condition \eqnok{rob-cond} holds at $V$. Next, we 
assume that condition (b) holds. It implies that condition (b) of 
Lemma \ref{grad-sys} holds. The conclusion then follows directly from Lemma 
\ref{grad-sys}. Finally, suppose condition (c) holds. Then, Robinson's 
condition \eqnok{rob-cond} holds at $V$ if and only if \eqnok{constr-2} has at least 
a solution $\dV\in\Re^{n\times r}$ for any $W_2\in\cS^r$. Noting that $V^TV=I$, we 
easily see that $\dV=VW_2/2$ is a solution of \eqnok{constr-2}, and thus Robinson's 
condition \eqnok{rob-cond} holds at $V$. 
\end{proof}

\gap

 From Proposition \ref{constr-cond}, we see that Robinson's condition 
\eqnok{rob-cond} indeed holds at a set of feasible points of \eqnok{smooth-form}. 
Though we are not able to show that it holds at all feasible points of 
\eqnok{smooth-form}, we observe in our implementation that the accumulation 
points of our augmented Lagrangian method generally satisfy one of the 
conditions described in Proposition \ref{constr-cond}, and so Robinson's 
condition usually holds at the accumulation points. Moreover, we have never 
seen that our augmented Lagrangian method failed to converge for an instance 
in our implementation so far. 

\subsection{Implementation details of augmented Lagrangian method for 
\eqnok{smooth-form}}
\label{implement}

In this section, we show how our augmented Lagrangian method proposed 
in Subsection \ref{aug-method} can be applied to solve problem 
\eqnok{smooth-form} (or, equivalently, \eqnok{diag-approx}). In 
particular, we will discuss the implementation details of outer 
and inner iterations of this method.

We first discuss how to efficiently evaluate the function and gradient 
involved in our augmented Lagrangian method for problem \eqnok{smooth-form}. 
Suppose that $\vrho >0$ is a penalty parameter, and $\{\lambda^+_{ij}\}_{i\neq j}$ 
and $\{\lambda^-_{ij}\}_{i\neq j}$ are the Lagrangian multipliers for the 
inequality constraints of \eqnok{smooth-form}, respectively, and 
$\mu \in \cS^r$ is the Lagrangian multipliers for the equality constraints 
of \eqnok{smooth-form}. For convenience of presentation, let $\Delta \in \cS^r$ 
be the matrix whose $ij$th entry equals the parameter $\Delta_{ij}$ of \eqnok{smooth-form} 
for all $i\neq j$ and diagonal entries are $0$. Similarly, let $\lambda^+$ (resp., 
$\lambda^-$) be an $r \times r$ symmetric matrix whose $ij$th entry is $\lambda^+_{ij}$ 
(resp., $\lambda^-_{ij}$) for all $i\neq j$ and diagonal entries are $0$. We now
define $\lambda\in\Re^{2r \times r}$ by stacking $\lambda^+$ over $\lambda^-$.
Using these notations, we observe that the associated Lagrangian function for 
problem \eqnok{smooth-form} can be rewritten as
\beq \label{augfun-spca}
L_{\vrho}(V,\lambda,\mu) =  \w(V) + \rho \bt |V|,
\eeq
where 
\beq \nn
\w(V) = - \tr(V^T \hsigma V) + \frac{1}{2\vrho}\left(\left\|\left[
\left(\ba{ll}
\lambda^+\\ 
\lambda^-
\ea\right)
+\vrho 
 \left(\ba{ll}
S - \Delta  \\
-S - \Delta 
\ea\right)\right]^+\right\|_F^2-
\left\|\left(\ba{ll}
\lambda^+ \\ 
\lambda^-
\ea\right)\right\|_F^2\right) + 
\mu \bt R + \frac{\vrho}{2} \|R\|_F^2,
\eeq
and
\beq \label{SR}
S =  V^T \hsigma V - {\widetilde\Diag}(V^T \hsigma V) , \gap R= {V}^TV-I.
\eeq
It is not hard to verify that the gradient of $w(V)$ can be computed 
according to
\[
\nabla w(V) = 2\left(-\hsigma V\left(I- [\lambda^++\vrho S- \vrho\Delta]^+
+ [\lambda^- -\vrho S- \vrho\Delta]^+\right) + V(\mu+\vrho R) \right).
\]
Clearly, the main effort for the above function and gradient evaluations 
lies in computing $V^T \hsigma V$ and $\hsigma V$. When $\hsigma\in\cS^p$ 
is explicitly given, the computational complexity for evaluating these 
two quantities is $O(p^2r)$. In practice, we are, however, typically 
given the data matrix $X\in\Re^{n\times p}$. Assuming the column means 
of $X$ are $0$, the sample covariance matrix $\hsigma$ can be obtained 
from $\hsigma=X^TX/(n-1)$. Nevertheless, when $p \gg n$, we observe that 
it is not efficient to compute and store $\hsigma$. Also, it is much 
cheaper to compute $V^T \hsigma V$ and $\hsigma V$ by using $\hsigma$ 
implicitly rather explicitly. Indeed, we can first evaluate $XV$, and 
then compute $V^T \hsigma V$ and $\hsigma V$ according to
\[
V^T \hsigma V = (XV)^T (XV)/(n-1), \gap \hsigma V = X^T(XV)/(n-1).
\]
Then, the resulting overall computational complexity is $O(npr)$, which 
is clearly much superior to the one by using $\hsigma$ explicitly, that 
is, $O(p^2r)$. 
  
We now address initialization and termination criterion for our 
augmented Lagrangian method. In particular, we choose initial point 
$V^0_\init$ and feasible point $V^{\rm feas}$ to be the loading 
vectors of the $r$ standard PCs, that is, the orthonormal eigenvectors 
corresponding to $r$ largest eigenvalues of $\hsigma$. In addition, we 
set initial penalty parameter and Lagrangian multipliers to be $1$, and 
set the parameters $\tau=0.2$ and $\sigma=10$. We terminate our method 
once the constraint violation and the relative difference between the 
augmented Lagrangian function and the regular objective function are 
sufficiently small, that is,   
\beq \label{term-aug}
\max_{i\neq j} [|V^T_i\hsigma V_j|-\Delta_{ij}]^+ \le  \eps_I, \ \ \ 
\max_{i,j} |R_{ij}| \le \eps_E, \ \ \ 
\frac{|L_{\vrho}(V,\lambda,\mu) - f(V)|} {\max {(|f(V)|,1)}} \le \eps_O,
\eeq
where $f(V)=-\tr(V^T\hsigma V) + \rho \bt |V|$, $R$ 
is defined in \eqnok{SR}, and $\eps_I$, $\eps_E$, $\eps_O$ are some 
prescribed accuracy parameters corresponding to inequality constraints, 
equality constraints and objective function, respectively. 
 
We next discuss how to apply the nonmonotone gradient methods proposed in 
Section \ref{grad-method} for the augmented Lagrangian subproblems, which 
are in the form of 
\beq \label{aug-subprob} 
\min_V L_{\vrho}(V,\lambda,\mu),
\eeq
where the function $L_{\vrho}(\cdot,\lambda,\mu)$ is defined in \eqnok{augfun-spca}. 
Given that the implementation details of those nonmonotone gradient methods are 
similar, we only focus on the second one, that is, the nonmonotone gradient method 
II. First, the initial point for this method can be chosen according to the scheme 
described at the end of Subsection \ref{aug-method}. In addition, given an iterate 
$V^k$, the search direction $d^k$ is computed by solving subproblem \eqnok{dir} 
with $H=\alpha^{-1}_k I$, which becomes, in the context of 
\eqnok{inner-prob} and \eqnok{augfun-spca},     
\beq \label{dir-spca}
d^k := \arg\min_d \left\{\nabla w(V^k) \bt d + \frac{1}{2\alpha_k} 
 \|d\|^2_F + \rho \bt |V^k+d| \right\}.
\eeq 
Here, $\alpha_k>0$ is chosen according to the scheme proposed by Barzilai and 
Borwein \cite{BarBor88}, which is also used by Birgin et al.\ \cite{BiMaRa00} 
for studying a class of projected gradient methods. Let $0 < \alpha_{\min} < 
\alpha_{\max}$ be given. Initially, choose an arbitrary $\alpha_0 \in [\alpha_{\min},
\alpha_{\max}]$. Then, $\alpha_k$ is updated as follows:   
\[
\alpha_{k+1} = 
\left\{ \ba{ll} 
\alpha_{\max}, & \ \mbox{if} \ b_k \le 0; \\
\max \{\alpha_{\min},\min \{\alpha_{\max},a_k/b_k\}\}, &\ \mbox{otherwise},
\ea \right.
\]
where $a_k =\|V^k-V^{k-1}\|_F^2$ and $b_k= (V^k-V^{k-1}) \bt (\nabla w(V^k)-\nabla 
w(V^{k-1}))$. It is not hard to verify that the optimal solution of problem 
\eqnok{dir-spca} has a closed-form expression, which is given by
\[
d^k =\sign (C) \odot 
\left[|C|-\alpha^k \rho
\right]^+ - V^k,
\]
where $C=V^k-\alpha^k \nabla w(V^k)$. In addition, we see from Lemma 
\ref{opt-char} that the following termination criterion is suitable 
for this method when applied to \eqnok{aug-subprob}:
\[
\frac{\max_{ij}|d_I(V^k)|_{ij}}{\max(|L_{\vrho}(V^k,\lambda,\mu)|, 1)} \le \eps,
\]
where $d_I(V^k)$ is the solution of \eqnok{dir-spca} with $\alpha_k=1$, and $\eps$ 
is a prescribed accuracy parameter. In our numerical implementation, we set 
$\alpha^0 = 1/\max_{ij}|d_I(V^0)|_{ij}$, $\alpha_{\max} = 1$, $\alpha_{\min} 
= 10^{-15}$ and $\eps = 10^{-4}$.

Finally, it shall be mentioned that for the sake of practical performance, 
the numerical implementation of our augmented Lagrangian method is slightly 
different from the one described in Subsection \ref{aug-method}. In 
particular, we follow a similar scheme as discussed on pp.\ $405$ of 
\cite{Bert99} to adjust penalty parameter and Lagrangian multipliers. 
Indeed, they are updated separately rather than simultaneously. Roughly 
speaking, given $\gamma \in (0,1)$, we adjust penalty parameter only when 
the constraint violation is not decreased by a factor $\gamma$ over 
the previous minimization. Similarly, we update Lagrangian multipliers 
only when the constraint violation is decreased by a factor $\gamma$ over 
the previous minimization. We choose $\gamma=0.25$ in our implementation 
as recommended in \cite{Bert99}.           

\section{Numerical results} \label{comp} 

In this section, we conduct numerical experiments for the augmented Lagrangian 
method detailed in Subsections \ref{aug-method} and \ref{implement} for 
formulation \eqnok{smooth-form} (or, equivalently, \eqnok{diag-approx}) of 
sparse PCA on synthetic, random, and real data. In particular, we compare 
the results of our approach with several existing sparse PCA methods in 
terms of total explained variance, correlation of PCs, and orthogonality of 
loading vectors, which include the generalized power methods (Journ\'ee et 
al. \cite{JoNeRiSe08}), the DSPCA algorithm (d'Aspremont et al. 
\cite{DaElJoLa07}), the SPCA algorithm (Zou et al. \cite{ZoHaTi06}), and 
the sPCA-rSVD algorithm (Shen and Huang \cite{ShHu07}). We now list all the 
methods used in this section in Table \ref{methods}. Specifically, the methods 
with the prefix `GPower' are the generalized power methods studied in \cite{JoNeRiSe08}, 
and the method ALSPCA is the augmented Lagrangian method proposed in this 
paper.   

\begin{table}[t]
\caption{\footnotesize Sparse PCA methods used for our comparison}
\centering
\label{methods}
\begin{scriptsize}
\begin{tabular}{l l}
\hline 
GPower$_{l_1}$       & Single-unit sparse PCA via ${l_1}$-penalty  \\
GPower$_{l_0}$       & Single-unit sparse PCA via ${l_0}$-penalty  \\
GPower$_{l_{1,m}}$   & Block sparse PCA via ${l_1}$-penalty        \\
GPower$_{l_{0,m}}$   & Block sparse PCA via ${l_0}$-penalty        \\
DSPCA                & DSPCA algorithm                             \\
SPCA                 & SPCA algorithm                              \\
rSVD                 & sPCA-rSVD algorithm with soft thresholding  \\
ALSPCA               & Augmented Lagrangian algorithm              \\
\hline
\end{tabular}
\end{scriptsize}
\end{table}

As discussed in Section \ref{formulation}, the PCs obtained from the standard 
PCA based on sample covariance matrix $\hsigma \in \Re^{n\times p}$ are nearly 
uncorrelated when the sample size is sufficiently large, and the total 
explained variance by the first $r$ PCs approximately equals the sum of the 
individual variances of PCs, that is, $\tr(V^T \hsigma V)$, where $V\in\Re^{p \times r}$ 
consists of the loading vectors of these PCs. However, the PCs found by sparse 
PCA methods may be correlated with each other, and thus the quantity $\tr(V^T \hsigma V)$ 
can overestimate much the total explained variance by the PCs due to the 
overlap among their individual variances. In attempt to deal with such an  
overlap, two adjusted total explained variances were proposed in \cite{ZoHaTi06,ShHu07}. 
It is not hard to observe that they can be viewed as the total explained variance of 
a set of transformed variables from the estimated sparse PCs. Given that these 
transformed variables can be dramatically different from those sparse PCs, their 
total explained variances may differ much from each other as well. To alleviate this 
drawback while taking into account the possible correlations among PCs, we introduce 
the following {\it adjusted total explained variance} for sparse PCs:
\[
 \AV V = \tr(V^T \hsigma V) - \sqrt{\sum_{i \neq j} (V^T_i \hsigma V_j)^2}.
\]  
Clearly, when the PCs are uncorrelated, it becomes the usual total explained variance, 
that is, $\tr(V^T \hsigma V)$.  We also define the {\it cumulative percentage 
of adjusted variance} (CPAV) for the first $r$ sparse PCs as the quotient of 
the adjusted total explained variance of these PCs and the total 
explained variance by all standard PCs, that is, $\AV V/\tr(\hsigma)$.

Finally, we shall stress that the sole purpose of this section is to compare the 
performance of those methods listed in Table \ref{methods} for finding the sparse PCs 
that nearly enjoy the three important properties possessed by the standard PCA 
(see Section \ref{introduction}). Therefore, we will not compare the speed of 
these methods. Nevertheless, it shall be mentioned that our method, that is, ALSPCA, 
is a first-order method and capable of solving large-scale problems within a reasonable 
amount of time as observed in our experiments.          

\subsection{Synthetic data} \label{Zou}

In this subsection we use the synthetic data introduced by Zou et al. \cite{ZoHaTi06} 
to test the effectiveness of our approach ALSPCA for finding sparse PCs.

The synthetic example \cite{ZoHaTi06} considers three hidden factors:
\[
V_1\sim N(0,290), \gap V_2 \sim N(0,300), \gap V_3 = -0.3V_1+0.925V_2+\eps, \gap \eps\sim N(0,1),
\]
where $V_1$, $V_2$ and $\eps$ are independent. Then the 10 observable 
variables are generated as follows:
\[
 X_i = V_1+\eps_i^1, \gap \eps_i^1\sim N(0,1), \gap i=1,2,3,4,
\]
\[
 X_i = V_2+\eps_i^2, \gap \eps_i^2\sim N(0,1), \gap i=5,6,7,8,
\]
\[
 X_i = V_3+\eps_i^3, \gap \eps_i^3\sim N(0,1), \gap i=9,10,
\]
where ${\eps_i^j}$ are independent for $j=1, 2, 3$ and $i=1, \ldots,10$. We 
will use the actual covariance matrix of $(X_1,\ldots, X_{10})$ to find the 
standard and sparse PCs, respectively.

We first see that $V_1$ and $V_2$ are independent, but $V_3$ is a linear 
combination of $V_1$ and $V_2$. Moreover, the variances of the three underlying 
factors $V_1$, $V_2$ and $V_3$ are $290$, $300$, and $283.8$, respectively. 
Therefore $V_2$ is slightly more important than $V_1$, and they both are much 
more important than $V_3$. In addition, the first two standard PCs together 
explain $99.72\%$ of the total variance (see Table \ref{synthetic}). These 
facts suggest that the first two sparse PCs be sufficient to explain most of the variance. 
Ideally, the first sparse PC recovers the factor $V_2$ only using $(X_5,X_6,X_7,X_8)$, 
and the second sparse PC recovers the factor $V_1$ only using $(X_1,X_2,X_3,X_4)$. 
Moreover, given that $(X_5,X_6,X_7,X_8)$ and $(X_1,X_2,X_3,X_4)$ are independent, 
these sparse PCs would be uncorrelated and orthogonal each other.  

In our test, we set $r=2$, $\Delta_{ij}=0$ for all $i \neq j$, and $\rho=4$ for 
formulation \eqnok{smooth-form} of sparse PCA. In addition, we choose 
\eqnok{term-aug} as the termination criterion for ALSPCA with $\eps_I=\eps_O=0.1$ and 
$\eps_E=10^{-3}$. The results of standard PCA and ALSPCA for this example are presented 
in Table \ref{synthetic}. The loadings of standard and sparse PCs are given in columns 
two and three, respectively, and their CPAVs are given in the last row. We clearly 
see that our sparse PCs are consistent with the ones predicted above. Interestingly, 
they are identical with the ones obtained by SPCA and DSPCA reported in \cite{ZoHaTi06,
DaElJoLa07}. For general data, however, these methods may perform quite differently 
(see Subsection \ref{Pitp}). 

\begin{table}[t]
\caption{\footnotesize Loadings of the first two PCs by standard PCA and ALSPCA }
\centering
\label{synthetic}
\begin{scriptsize}
\begin{tabular}{|c||rr | rr|}
\hline 
\multicolumn{1}{|c||}{Variable} & \multicolumn{2}{c|}{PCA} &  
\multicolumn{2}{c|}{ALSPCA} \\
\multicolumn{1}{|c||}{} & \multicolumn{1}{c}{\sc PC1} 
& \multicolumn{1}{c|}{\sc PC2} & \multicolumn{1}{c}{\sc PC1} 
& \multicolumn{1}{c|}{\sc PC2} \\
\hline
$X_1$    &   0.1158 &  0.4785 &       0 &  0.5000  \\
$X_2$    &   0.1158 &  0.4785 &       0 &  0.5000  \\
$X_3$    &   0.1158 &  0.4785 &       0 &  0.5000  \\
$X_4$    &   0.1158 &  0.4785 &       0 &  0.5000  \\
$X_5$    &  -0.3955 &  0.1449 & -0.5000 &       0  \\
$X_6$    &  -0.3955 &  0.1449 & -0.5000 &       0  \\
$X_7$    &  -0.3955 &  0.1449 & -0.5000 &       0  \\
$X_8$    &  -0.3955 &  0.1449 & -0.5000 &       0  \\
$X_9$    &  -0.4005 & -0.0095 &       0 &       0  \\
$X_{10}$ &  -0.4005 & -0.0095 &       0 &       0  \\
\hline
\multicolumn{1}{|c||}{CPAV (\%)} & \multicolumn{2}{c|}{99.72} &  
\multicolumn{2}{c|}{80.46} \\
\hline
\end{tabular}
\end{scriptsize}
\\
\vspace{.10cm}
\text{\scriptsize Synthetic data}
\end{table}

\subsection{Random data} \label{rand}

In this subsection, we compare the performance of the GPower methods \cite{JoNeRiSe08} 
and our ALSPCA method for finding sparse PCs on a set of randomly generated data. First,
we randomly generate $100$ centered data matrices $X$ with the size of $20 \times 20$. 
Throughout this subsection, we set $\Delta_{ij} = 0.5$ for all $i \neq j$ for formulation 
\eqnok{smooth-form} of sparse PCA, and choose \eqnok{term-aug} as the termination criterion 
for ALSPCA with $\eps_I=0.1$, $\eps_E=0.1$ and $\eps_O = 0.1$. 

In the first test, we aim to find the first {\it three} sparse PCs with the average 
number of zero loadings around $30$ ($50\%$ sparsity). To meet this purpose, the 
tunning parameter $\rho$ for problem \eqnok{smooth-form} and the parameters for the 
GPower methods are properly chosen. The results of the GPower methods and ALSPCA for 
the above randomly generated instances are presented in Table \ref{rand-1}. The name 
of each method is given in column one. The sparsity measured by the number of zero 
loadings averaged over all instances is given in column two. The column three gives 
the average amount of non-orthogonality of the loading vectors, which is measured 
by the maximum absolute difference between $90^\circ$ and the angles formed by all 
pairs of loading vectors. Clearly, the smaller value in this column implies the 
better orthogonality. In addition, the maximum correlation and CPAV of sparse PCs 
averaged over all instances are presented in columns four and five, respectively. 
From Table \ref{rand-1}, we see that the average number of zero loadings 
of the first three sparse PCs for all methods are almost same, which are 
around $30$. We also observe that the sparse PCs given by our method ALSPCA 
are almost uncorrelated and their loading vectors are nearly orthogonal, 
which are much superior to the GPower methods. Moreover, our sparse PCs 
have better CPAV than the others.    

\begin{table}[t]
\caption{\footnotesize Comparison of GPower and ALSPCA}
\centering
\label{rand-1}
\begin{scriptsize}
\begin{tabular}{|l||c||c||c||c|}
\hline 
\multicolumn{1}{|c||}{Method} & \multicolumn{1}{c||}{Sparsity} &  
\multicolumn{1}{c||}{Non-orthogonality} & \multicolumn{1}{c||}{Correlation} & 
\multicolumn{1}{c|}{CPAV (\%)} \\
 
\hline
GPower$_{l_1}$     & 30.73 & 3.480 & 0.104 & 39.06 \\
GPower$_{l_0}$     & 30.37 & 3.540 & 0.111 & 38.74 \\
GPower$_{l_1,m}$   & 30.61 & 5.220 & 0.161 & 38.12 \\
GPower$_{l_0,m}$   & 30.51 & 5.216 & 0.153 & 37.99 \\
ALSPCA             & 30.91 & 1.007 & 0.038 & 40.79 \\
\hline
\end{tabular}
\end{scriptsize}
\\
\vspace{.10cm}
\text{\scriptsize Random data: Test I}
\end{table}
 
Our aim of the second test is to find the first {\it six} sparse PCs 
with the average number of zero loadings around $60$ ($50\%$ sparsity). 
To reach this goal, the tunning parameter $\rho$ for problem \eqnok{smooth-form} 
and the parameters for the GPower methods are appropriately chosen. The results of 
the GPower methods and ALSPCA for the above randomly generated instances are 
presented in Table \ref{rand-2}. Each column of Table \ref{rand-2} has 
the same meaning as Table \ref{rand-1}. First, we clearly see that our method 
substantially outperforms the GPower methods in terms of uncorrelation of PCs, 
orthogonality of loading vectors and CPAV. Further, in comparison with Table \ref{rand-1}, 
we observe that as the number of PCs increases, the CPAV grows accordingly 
for all methods, and moreover, the sparse PCs given by the GPower methods 
become much more correlated and non-orthogonal each other. But the performance 
of our sparse PCs almost remains same. This phenomenon is actually not surprising, 
given that uncorrelation and orthogonality are not well taken into account in 
the GPower methods.  
         
\begin{table}[t]
\caption{\footnotesize Comparison of GPower and ALSPCA}
\centering
\label{rand-2}
\begin{scriptsize}
\begin{tabular}{|l||c||c||c||c|}
\hline 
\multicolumn{1}{|c||}{Method} & \multicolumn{1}{c||}{Sparsity} &  
\multicolumn{1}{c||}{Non-orthogonality} & \multicolumn{1}{c||}{Correlation} & 
\multicolumn{1}{c|}{CPAV (\%)} \\
\hline
GPower$_{l_1}$     & 60.58 &  5.766 & 0.168 & 63.46 \\
GPower$_{l_0}$     & 60.53 &  6.031 & 0.169 & 63.27 \\
GPower$_{l_1,m}$   & 60.02 &  8.665 & 0.265 & 62.34 \\
GPower$_{l_0,m}$   & 60.14 &  9.146 & 0.272 & 62.00 \\
ALSPCA             & 60.74 &  1.177 & 0.066 & 64.62 \\
\hline
\end{tabular}
\end{scriptsize}
\\
\vspace{.10cm}
\text{\scriptsize Random data: Test II}
\end{table}

\subsection{Pitprops data} \label{Pitp}

In this subsection we test the performance of our approach ALSPCA for finding sparse 
PCs on the real data, that is, Pitprops data. We also compare the results with 
several existing methods \cite{ZoHaTi06,DaElJoLa07,ShHu07,JoNeRiSe08}.

The Pitprops data introduced by Jeffers \cite{Jef67} has $180$ observations and $13$ measured 
variables. It is a classic example that illustrates the difficulty of interpreting PCs. Recently, 
several sparse PCA methods \cite{JoTrUd03,ZoHaTi06,ShHu07,DaElJoLa07} have been applied to this 
data set for finding {\it six} sparse PCs by using the actual covariance matrix. For ease of comparison, 
we present the standard PCs, and some of those sparse PCs in Tables \ref{Pit-PCA}-\ref{Pit-DSPCA}, 
respectively. It shall be mentioned that two groups of sparse PCs were found by DSPCA with the 
parameter $k_1=5$ or $6$, and they have similar sparsity and total explained variance (see 
\cite{DaElJoLa07} for details). Thus, we only present the latter one (i.e., the one with $k_1=6$) 
in Table \ref{Pit-DSPCA}. Also, we applied the GPower methods \cite{JoNeRiSe08} to this data for 
finding the PCs with the largest sparsity of the ones given in \cite{ZoHaTi06, 
ShHu07,DaElJoLa07}, and found the best result was given by ${\rm GPower}_{l_1}$. 
Thus, we only report the six sparse PCs obtained by ${\rm GPower}_{l_1}$ in Table \ref{Pit-GPower}. 
In addition, we present sparsity, CPAV, non-orthogonality and correlation of the PCs 
obtained by the standard PCA and sparse PCA methods \cite{ZoHaTi06,ShHu07,DaElJoLa07,JoNeRiSe08} 
in columns two to five of Table \ref{result}, respectively. In more details, the second and fifth 
columns respectively give sparsity (measured by the number of zero loadings) and CPAV. The third column
reports non-orthogonality, which is measured by the maximum absolute difference 
between $90^\circ$ and the angles formed by all pairs of loading vectors. The fourth column presents 
the maximum correlation of PCs. Though the PCs obtained by these sparse PCA methods have nice sparsity, we 
observe from Tables \ref{result} that they are much correlated, and moreover almost all 
of them are far from orthogonal each other except the ones given by SPCA \cite{ZoHaTi06}. 
To improve the quality of sparse PCs, we next apply our approach ALSPCA, and compare the 
results with these methods. For all tests below, we choose \eqnok{term-aug} as the termination 
criterion for ALSPCA with $\eps_O=0.1$ and $\eps_I=\eps_E=10^{-3}$. 

\begin{table}[t]
\caption{\footnotesize Loadings of the first six PCs by standard PCA}
\centering
\label{Pit-PCA}
\begin{scriptsize}
\begin{tabular}{|l||r r r r r r|}
\hline 
\multicolumn{1}{|r||}{Variable} & \multicolumn{1}{r}{PC1} &  
\multicolumn{1}{r}{PC2} & \multicolumn{1}{r}{PC3}&  
\multicolumn{1}{r}{PC4} & \multicolumn{1}{r}{PC5}&  
\multicolumn{1}{r|}{PC6}     \\ 
\hline
topdiam  &  0.4038 &   0.2178  &  0.2073  &  0.0912 &   0.0826 &   0.1198 \\
length   &  0.4055 &   0.1861  &  0.2350  &  0.1027 &   0.1128 &   0.1629 \\
moist    &  0.1244 &   0.5406  & -0.1415  & -0.0784 &  -0.3498 &  -0.2759 \\
testsg   &  0.1732 &   0.4556  & -0.3524  & -0.0548 &  -0.3558 &  -0.0540 \\
ovensg   &  0.0572 &  -0.1701  & -0.4812  & -0.0491 &  -0.1761 &   0.6256 \\
ringtop  &  0.2844 &  -0.0142  & -0.4753  &  0.0635 &   0.3158 &   0.0523 \\
ringbut  &  0.3998 &  -0.1897  & -0.2531  &  0.0650 &   0.2151 &   0.0026 \\
bowmax   &  0.2936 &  -0.1892  &  0.2431  & -0.2856 &  -0.1853 &  -0.0551 \\
bowdist  &  0.3566 &   0.0171  &  0.2076  & -0.0967 &   0.1061 &   0.0342 \\
whorls   &  0.3789 &  -0.2485  &  0.1188  &  0.2050 &  -0.1564 &  -0.1731 \\
clear    & -0.0111 &   0.2053  &  0.0704  & -0.8036 &   0.3430 &   0.1753 \\
knots    & -0.1151 &   0.3432  & -0.0920  &  0.3008 &   0.6003 &  -0.1698 \\
diaknot  & -0.1125 &   0.3085  &  0.3261  &  0.3034 &  -0.0799 &   0.6263 \\
\hline
\end{tabular}
\end{scriptsize}
\\
\vspace{.10cm}
\text{\scriptsize Pitprops data}
\end{table}

\begin{table}[t]
\caption{\footnotesize Loadings of the first six PCs by SPCA}
\centering
\label{Pit-SPCA}
\begin{scriptsize}
\begin{tabular}{|l||r r r r r r|}
\hline 
\multicolumn{1}{|r||}{Variable} & \multicolumn{1}{r}{PC1} &  
\multicolumn{1}{r}{PC2} & \multicolumn{1}{r}{PC3}&  
\multicolumn{1}{r}{PC4} & \multicolumn{1}{r}{PC5}&  
\multicolumn{1}{r|}{PC6}     \\
\hline
topdiam    &  -0.477 &      0 &      0 &     0 &     0 &      0 \\
length     &  -0.476 &      0 &      0 &     0 &     0 &      0 \\
moist      &       0 &  0.785 &      0 &     0 &     0 &      0 \\
testsg     &       0 &  0.620 &      0 &     0 &     0 &      0 \\
ovensg     &   0.177 &      0 &  0.640 &     0 &     0 &      0 \\
ringtop    &       0 &      0 &  0.589 &     0 &     0 &      0 \\
ringbut    &  -0.250 &      0 &  0.492 &     0 &     0 &      0 \\
bowmax     &  -0.344 & -0.021 &      0 &     0 &     0 &      0 \\
bowdist    &  -0.416 &      0 &      0 &     0 &     0 &      0 \\
whorls     &  -0.400 &      0 &      0 &     0 &     0 &      0 \\
clear      &       0 &      0 &      0 &    -1 &     0 &      0 \\
knots      &       0 &  0.013 &      0 &     0 &    -1 &      0 \\ 
diaknot    &       0 &      0 & -0.015 &     0 &     0 &      1 \\
\hline
\end{tabular}
\end{scriptsize}
\\
\vspace{.10cm}
\text{\scriptsize Pitprops data}
\end{table}

\begin{table}[t]
\caption{\footnotesize Loadings of the first six PCs by rSVD}
\centering
\label{Pit-rSVD}
\begin{scriptsize}
\begin{tabular}{|l||r r r r r r|}
\hline 
\multicolumn{1}{|r||}{Variable} & \multicolumn{1}{r}{PC1} &  
\multicolumn{1}{r}{PC2} & \multicolumn{1}{r}{PC3}&  
\multicolumn{1}{r}{PC4} & \multicolumn{1}{r}{PC5}&  
\multicolumn{1}{r|}{PC6}     \\
\hline
topdiam    &  -0.449 &       0 &      0 & -0.114 &      0 &      0 \\
length     &  -0.460 &       0 &      0 & -0.102 &      0 &      0 \\
moist      &       0 &  -0.707 &      0 &      0 &      0 &      0 \\
testsg     &       0 &  -0.707 &      0 &      0 &      0 &      0 \\
ovensg     &       0 &       0 &  0.550 &      0 &      0 & -0.744 \\
ringtop    &  -0.199 &       0 &  0.546 & -0.176 &      0 &      0 \\
ringbut    &  -0.399 &       0 &  0.366 &      0 &      0 &      0 \\
bowmax     &  -0.279 &       0 &      0 &  0.422 &      0 &      0 \\
bowdist    &  -0.380 &       0 &      0 &  0.283 &      0 &      0 \\
whorls     &  -0.407 &       0 &      0 &      0 &  0.231 &      0 \\
clear      &       0 &       0 &      0 & -0.785 & -0.973 &      0 \\
knots      &       0 &       0 &      0 & -0.265 &      0 &  0.161 \\ 
diaknot    &       0 &       0 & -0.515 &      0 &      0 & -0.648 \\
\hline
\end{tabular}
\end{scriptsize}
\\
\vspace{.10cm}
\text{\scriptsize Pitprops data}
\end{table}

\begin{table}[t]
\caption{\footnotesize Loadings of the first six PCs by DSPCA}
\centering
\label{Pit-DSPCA}
\begin{scriptsize}
\begin{tabular}{|l||r r r r r r|}
\hline 
\multicolumn{1}{|r||}{Variable} & \multicolumn{1}{r}{PC1} &  
\multicolumn{1}{r}{PC2} & \multicolumn{1}{r}{PC3}&  
\multicolumn{1}{r}{PC4} & \multicolumn{1}{r}{PC5}&  
\multicolumn{1}{r|}{PC6}     \\
\hline
topdiam    & -0.4907 &       0 &       0 &        0 &       0 &        0 \\
length     & -0.5067 &       0 &       0 &        0 &       0 &        0 \\
moist      &       0 &  0.7071 &       0 &        0 &       0 &        0 \\
testsg     &       0 &  0.7071 &       0 &        0 &       0 &        0 \\
ovensg     &       0 &       0 &       0 &        0 & -1.0000 &        0 \\
ringtop    & -0.0670 &       0 & -0.8731 &        0 &       0 &        0 \\
ringbut    & -0.3566 &       0 & -0.4841 &        0 &       0 &        0 \\
bowmax     & -0.2335 &       0 &       0 &        0 &       0 &        0 \\
bowdist    & -0.3861 &       0 &       0 &        0 &       0 &        0 \\
whorls     & -0.4089 &       0 &       0 &        0 &       0 &        0 \\
clear      &       0 &       0 &       0 &        0 &       0 &   1.0000 \\
knots      &       0 &       0 &       0 &   1.0000 &       0 &        0 \\ 
diaknot    &       0 &       0 &  0.0569 &        0 &       0 &        0 \\
\hline
\end{tabular}
\end{scriptsize}
\\
\vspace{.10cm}
\text{\scriptsize Pitprops data}
\end{table}

In the first experiment, we aim to find six uncorrelated and orthogonal sparse PCs by ALSPCA 
while explaining most of variance. In particular, we set $r=6$, $\Delta_{ij} = 0.07$ for 
all $i \neq j$ and $\rho=0.8$ for formulation \eqnok{smooth-form} of sparse PCA. The 
resulting sparse PCs are presented in Table \ref{Pit-ALSPCA1}, and their sparsity, 
CPAV, non-orthogonality and correlation are reported in row seven of Table 
\ref{result}. We easily observe that our method ALSPCA overally outperforms the 
other sparse PCA methods substantially in all aspects except sparsity. Naturally, 
we can improve the sparsity by increasing the values of $\rho$, yet the total 
explained variance may be sacrificed as demonstrated in our next experiment.           

\begin{table}[t]
\caption{\footnotesize Loadings of the first six PCs by GPower$_{l_1}$}
\centering
\label{Pit-GPower}
\begin{scriptsize}
\begin{tabular}{|l||r r r r r r|}
\hline 
\multicolumn{1}{|r||}{Variable} & \multicolumn{1}{r}{PC1} &  
\multicolumn{1}{r}{PC2} & \multicolumn{1}{r}{PC3}&  
\multicolumn{1}{r}{PC4} & \multicolumn{1}{r}{PC5}&  
\multicolumn{1}{r|}{PC6}     \\
 
\hline
topdiam      &  -0.4182 &        0 &       0 &       0 &       0 &      0 \\
length       &  -0.4205 &        0 &       0 &       0 &       0 &      0 \\
moist        &        0 &  -0.7472 &       0 &       0 &       0 &      0 \\
testsg       &  -0.1713 &  -0.6646 &       0 &       0 &       0 &      0 \\
ovensg       &        0 &        0 &       0 &       0 & -0.7877 &      0 \\
ringtop      &  -0.2843 &        0 &       0 &       0 & -0.6160 &      0 \\
ringbut      &  -0.4039 &        0 &       0 &       0 &       0 &      0 \\
bowmax       &  -0.3002 &        0 &       0 &       0 &       0 &      0 \\
bowdist      &  -0.3677 &        0 &       0 &       0 &       0 &      0 \\
whorls       &  -0.3868 &        0 &       0 &       0 &       0 &      0 \\
clear        &        0 &        0 &       0 &       0 &       0 & 1.0000 \\
knots        &        0 &        0 &       0 &  1.0000 &       0 &      0 \\ 
diaknot      &        0 &        0 &  1.0000 &       0 &       0 &      0 \\
\hline
\end{tabular}
\end{scriptsize}
\\
\vspace{.10cm}
\text{\scriptsize Pitprops data}
\end{table}

\begin{table}[t]
\caption{\footnotesize Loadings of the first six PCs by ALSPCA }
\centering
\label{Pit-ALSPCA1}
\begin{scriptsize}
\begin{tabular}{|l||r r r r r r|}
\hline 
\multicolumn{1}{|r||}{Variable} & \multicolumn{1}{r}{PC1} &  
\multicolumn{1}{r}{PC2} & \multicolumn{1}{r}{PC3}&  
\multicolumn{1}{r}{PC4} & \multicolumn{1}{r}{PC5}&  
\multicolumn{1}{r|}{PC6}     \\
\hline
topdiam  &   0.4394 &       0 &        0 &       0 &       0 &        0 \\
length   &   0.4617 &       0 &        0 &       0 &       0 &        0 \\
moist    &   0.0419 &  0.4611 &  -0.1644 &  0.0688 & -0.3127 &        0 \\
testsg   &   0.1058 &  0.7902 &        0 &       0 &       0 &        0 \\
ovensg   &   0.0058 &       0 &        0 &       0 &       0 &        0 \\
ringtop  &   0.1302 &       0 &   0.2094 &       0 &       0 &   0.9999 \\
ringbut  &   0.3477 &       0 &   0.0515 &       0 &  0.3240 &        0 \\
bowmax   &   0.2256 & -0.3566 &        0 &       0 &       0 &        0 \\
bowdist  &   0.4063 &       0 &        0 &       0 &       0 &        0 \\
whorls   &   0.4606 &       0 &        0 &       0 &       0 &  -0.0125 \\
clear    &        0 &  0.0369 &        0 & -0.9973 &       0 &        0 \\
knots    &  -0.1115 &  0.1614 &  -0.0762 &  0.0239 &  0.8929 &        0 \\ 
diaknot  &  -0.0487 &  0.0918 &   0.9595 &  0.0137 &       0 &        0 \\
\hline
\end{tabular}
\end{scriptsize}
\\
\vspace{.10cm}
\text{\scriptsize Pitprops data: Test I}
\end{table}

We now attempt to find six PCs with similar correlation and orthogonality but higher 
sparsity than those given in the above experiment. For this purpose, we set $\Delta_{ij} = 0.07$ 
for all $i \neq j$ and choose $\rho=2.1$ for problem \eqnok{smooth-form} in this experiment. 
The resulting sparse PCs are presented in Table \ref{Pit-ALSPCA2}. The CPAV, non-orthogonality 
and correlation of these PCs are given in row eight of Table \ref{result}. 
In comparison with the ones given in the above experiment, the PCs obtained in this experiment 
are much more sparse while retaining almost same correlation and orthogonality. However, 
their CPAV goes down dramatically. Combining the results of these two experiments, we 
deduce that for the Pitprops data, it seems not possible to extract six highly sparse 
(e.g., around $60$ zero loadings), nearly orthogonal and uncorrelated PCs while explaining 
most of variance as they may not exist. The following experiment further sustains such 
a deduction. 

\begin{table}[t]
\caption{\footnotesize Loadings of the first six PCs by ALSPCA}
\centering
\label{Pit-ALSPCA2}
\begin{scriptsize}
\begin{tabular}{|l||r r r r r r|}
\hline 
\multicolumn{1}{|r||}{Variable} & \multicolumn{1}{r}{PC1} &  
\multicolumn{1}{r}{PC2} & \multicolumn{1}{r}{PC3}&  
\multicolumn{1}{r}{PC4} & \multicolumn{1}{r}{PC5}&  
\multicolumn{1}{r|}{PC6}     \\
\hline
topdiam  &   1.0000 &        0 &       0 &       0 &       0 &        0 \\
length   &        0 &  -0.2916 & -0.1421 &       0 &       0 &  -0.0599 \\
moist    &        0 &   0.9565 & -0.0433 &       0 &       0 &  -0.0183 \\
testsg   &        0 &        0 &       0 &  0.0786 & -0.1330 &        0 \\
ovensg   &        0 &        0 & -0.9683 &       0 &       0 &        0 \\
ringtop  &        0 &        0 &       0 &       0 &       0 &        0 \\
ringbut  &        0 &        0 &  0.1949 &       0 &  0.2369 &        0 \\
bowmax   &        0 &        0 &       0 &       0 &       0 &        0 \\
bowdist  &        0 &        0 &       0 &       0 &       0 &        0 \\
whorls   &        0 &        0 &       0 &       0 &       0 &        0 \\
clear    &        0 &        0 &       0 & -0.9969 &       0 &        0 \\
knots    &        0 &        0 & -0.0480 &  0.0109 &  0.9624 &        0 \\ 
diaknot  &        0 &        0 & -0.0093 &       0 &       0 &   0.9980 \\
\hline
\end{tabular}
\end{scriptsize}
\\
\vspace{.10cm}
\text{\scriptsize Pitprops data: Test II}
\end{table}

 Finally we are interested in exploring how the correlation controlling parameters 
$\Delta_{ij} (i \neq j)$ affect the performance of the sparse PCs. In particular, 
we set $\Delta_{ij} = 0.5$ for all $i \neq j$ and choose $\rho=0.7$ for problem 
\eqnok{smooth-form}. The obtained sparse PCs are presented in Table \ref{Pit-ALSPCA3}. 
The CPAV, non-orthogonality and correlation of these PCs are given 
in the last row of Table \ref{result}. We see that these PCs are highly sparse,  
orthogonal, and explain good amount of variance. However, they are quite 
correlated with each other, which is actually not surprising, given that $\Delta_{ij} 
(i \neq j)$ are not small. Despite such a drawback, we observe that these sparse PCs 
still overally outperform those obtained by SPCA, rSVD, DSPCA and ${\rm GPower}_{l_1}$.

 From the above experiments, we may conclude that for the Pitprops data, there do 
not exist six highly sparse, nearly orthogonal and uncorrelated PCs while explaining 
most of variance. Therefore, the most acceptable sparse PCs seem to be the ones given 
in Table \ref{Pit-ALSPCA1}.     

\begin{table}[t]
\caption{\footnotesize Loadings of the first six PCs by ALSPCA}
\centering
\label{Pit-ALSPCA3}
\begin{scriptsize}
\begin{tabular}{|l||r r r r r r|}
\hline 
\multicolumn{1}{|r||}{Variable} & \multicolumn{1}{r}{PC1} &  
\multicolumn{1}{r}{PC2} & \multicolumn{1}{r}{PC3}&  
\multicolumn{1}{r}{PC4} & \multicolumn{1}{r}{PC5}&  
\multicolumn{1}{r|}{PC6}     \\
\hline
topdiam  &   0.4051 &       0 &        0 &       0 &       0 &        0 \\
length   &   0.4248 &       0 &        0 &       0 &       0 &        0 \\
moist    &        0 &  0.7262 &        0 &       0 &       0 &        0 \\
testsg   &   0.0018 &  0.6875 &        0 &       0 &       0 &        0 \\
ovensg   &        0 &       0 &  -1.0000 &       0 &       0 &        0 \\
ringtop  &   0.1856 &       0 &        0 &       0 &       0 &        0 \\
ringbut  &   0.4123 &       0 &        0 &       0 &       0 &        0 \\
bowmax   &   0.3278 &       0 &        0 &       0 &       0 &        0 \\
bowdist  &   0.3830 &       0 &        0 &       0 &       0 &        0 \\
whorls   &   0.4437 & -0.0028 &        0 &       0 &       0 &        0 \\
clear    &        0 &       0 &        0 & -1.0000 &       0 &        0 \\
knots    &        0 &       0 &        0 &       0 &  1.0000 &        0 \\ 
diaknot  &        0 &       0 &        0 &       0 &       0 &   1.0000 \\
\hline
\end{tabular}
\end{scriptsize}
\\
\vspace{.10cm}
\text{\scriptsize Pitprops data: Test III}
\end{table}

\begin{table}[t]
\caption{\footnotesize Comparison of SPCA, rSVD, DSPCA, GPower$_{l_1}$ and ALSPCA}
\centering
\label{result}
\begin{scriptsize}
\begin{tabular}{|l||c||c||c||c|}
\hline 
\multicolumn{1}{|c||}{Method} &  \multicolumn{1}{c||}{Sparsity} &
\multicolumn{1}{c||}{Non-orthogonality} & \multicolumn{1}{|c||}{Correlation } & 
\multicolumn{1}{|c|}{CPAV  (\%)} \\
\hline
PCA              & 0       & 0        & 0       & 87.00   \\    
SPCA             & 60      & 0.86     & 0.395   & 66.21   \\                 
rSVD             & 53      & 14.76    & 0.459   & 67.04   \\          
DSPCA            & 63      & 13.63    & 0.573   & 60.97   \\           
GPower$_{l_1}$   & 63      & 10.09    & 0.353   & 64.15   \\
ALSPCA-1         & 46      & 0.03     & 0.082   & 69.55   \\
ALSPCA-2         & 60      & 0.03     & 0.084   & 39.42   \\
ALSPCA-3         & 63      & 0.00     & 0.222   & 65.97   \\
\hline
\end{tabular}
\end{scriptsize}
\\
\vspace{.10cm}
\text{\scriptsize Pitprops data}
\end{table}

\section{Concluding remarks}
\label{concl-remark}

In this paper we proposed a new formulation of sparse PCA for finding 
sparse and nearly uncorrelated principal components (PCs) with orthogonal 
loading vectors while explaining as much of the total variance as possible. 
We also developed a novel globally convergent augmented Lagrangian method 
for solving a class of nonsmooth constrained optimization problems, which 
is well suited for our formulation of sparse PCA. Additionally, we proposed 
two nonmonotone gradient methods for solving the augmented Lagrangian 
subproblems, and established their global and local convergence. Finally, 
we compared our sparse PCA approach with several existing methods on 
synthetic, random, and real data, respectively. The computational 
results demonstrate that the sparse PCs produced by our approach 
substantially outperform those by other methods in terms of total 
explained variance, correlation of PCs, and orthogonality of 
loading vectors.     

As observed in our experiments, formulation \eqnok{diag-approx} is very 
effective in finding the desired sparse PCs. However, there remains a 
natural theoretical question about it. Given a set of random variables, 
suppose there exist sparse and uncorrelated PCs with orthogonal loading 
vectors while explaining most of variance of the variables. In other 
words, their actual covariance matrix $\Sigma$ has few dominant eigenvalues 
and the associated orthonormal eigenvectors are sparse. Since in practice 
$\Sigma$ is typically unknown and only approximated by a sample covariance 
matrix $\hsigma$, one natural question is whether or not there exist some 
suitable parameters $\rho$ and $\Delta_{ij} \ (i\neq j)$ so that 
\eqnok{diag-approx} is able to recover those sparse PCs almost surely as 
the sample size becomes sufficiently large.     

In Section \ref{aug-spca} we showed that Robinson's condition \eqnok{rob-cond} 
holds at a set of feasible points of \eqnok{smooth-form}. Also, we observed from 
our experiments that the accumulation points of our augmented Lagrangian method 
lie in this set when applied to \eqnok{smooth-form}, and thus it converges. 
However, it remains open whether or not Robinson's condition holds at all 
feasible points of \eqnok{smooth-form}.       

In addition, Burer and Monteiro \cite{BurMon03} recently applied the classical 
augmented Lagrangian method to a nonconvex nonlinear program (NLP) reformulation 
of semidefinite programs (SDP), and they obtained some nice computational 
results especially for the SDP relaxations of several hard combinatorial 
optimization problems. However, the classical augmented Lagrangian method 
generally cannot guarantee converging to a feasible point when applied to 
a nonconvex NLP. Due this and \cite{Mont09}, at least theoretically, their 
approach \cite{BurMon03} may not converge to a feasible point of the primal 
SDP. Given that the augmented Lagrangian method proposed in this paper 
converges globally under some mild assumptions, it would be interesting 
to apply it to the NLP reformulation of SDP and compare the performance 
with the approach studied in \cite{BurMon03}.             

Finally, the codes of our approach for solving the sparse PCA formulation \eqnok{smooth-form} 
(or, equivalently, \eqnok{diag-approx}) are written in Matlab, which are available 
online at www.math.sfu.ca/$\sim$zhaosong. As a future research, we will further 
improve their performance by conducting more extensive computational experiments 
and exploring more practical applications. 
  
\section*{Acknowledgement}

We gratefully acknowledge comments from Jim Burke, Terry Rockafellar, Defeng Sun 
and Paul Tseng in the West Coast Optimization Meeting at University of Washington, 
Seattle, USA in Spring 2009.


\begin{thebibliography}{10}

\bibitem{AlBrBr00}  
O.~Alter, P.~Brown, and D.~Botstein. 
\newblock Singular value decomposition for Genome-Wide expression data
Processing and Modeling.
\newblock {\em Proceedings of the National Academy of Sciences}, 97:10101--10106, 2000.

\bibitem{BarBor88}
J.~Barzilai and J.~M.~Borwein.
\newblock Two point step size gradient methods.
\newblock {\em IMA Journal of Numerical Analysis}, 8:141--148, 1988.

\bibitem{Bert99}
D. P.~Bertsekas
\newblock {\em Nonlinear Programming}.
\newblock Athena Scientific, 1999.

\bibitem{BiMaRa00}
E. G.~Birgin, J. M.~Mart\'inez, and M.~Raydan.
\newblock Nonmonotone spectral projected gradient methods on convex sets.
\newblock {\em SIAM Journal on Optimization}, 4:1196--1211, 2000.

\bibitem{BurMon03}
S.~Burer and R.~D.~C.~Monteiro.
\newblock A nonlinear programming algorithm for solving semidefinite programs via low-rank factorization.
\newblock {\em Mathematical Programming, Series B}, 95:329--357, 2003. 

\bibitem{CaJo95}
J.~Cadima and I.~Jolliffe.
\newblock Loadings and correlations in the interpretation of principal components.
\newblock {\em Journal of Applied Statistics}, 22:203--214, 1995. 

\bibitem{DaBaEl08}
A.~d'Aspremont, F. R.~Bach, and L.~El Ghaoui.
\newblock Optimal solutions for sparse principal component analysis.
\newblock {\em Journal of Machine Learning Research}, 9:1269--1294, 2008.

\bibitem{DaElJoLa07}
A.~d'Aspremont, L.~El Ghaoui, M. I.~Jordan, and G. R. G.~Lanckriet.
\newblock A direct formulation for sparse PCA using semidefinite programming.
\newblock {\em SIAM Review}, 49:434--448, 2007.

\bibitem{Fan49}
K.~Fan.
\newblock On a theorem of Weyl concerning the eigenvalues of linear transformations. 
\newblock {\em Proceedings of the National Academy of the Sciences of U.S.A.}, 35: 
652--655 (1949).

\bibitem{HaBuBr96}
P.~Hancock, A.~Burton, and V.~Bruce. 
\newblock Face processing: Human perception and principal components analysis.
\newblock {\em Memory and Cognition}, 24:26--40, 1996.


\bibitem{HaTiEi00}
T.~Hastie, R.~Tibshirani, M.~Eisen, P.~Brown, D.~Ross, U.~Scherf, J.~Weinstein, A.~Alizadeh, 
L.~Staudt, and D.~Botstein.
\newblock \'gene Shaving\' as a method for identifying distinct sets of genes with similar 
Expression Patterns. 
\newblock {\em Genome Biology}, 1:1--21, 2000.

\bibitem{HaTiFr01}
T.~Hastie, R.~Tibshirani, and J.~Friedman.
\newblock {\em The Elements of Statistical Learning; Data mining, Inference and Prediction}.
\newblock New York: Springer Verlag.

\bibitem{HiLe93-1}
J.~B.~Hiriart--Urruty and C.~Lemar\'{e}chal. 
\newblock {\em Convex Analysis and Minimization algorithms I}. 
\newblock Comprehensive Study in Mathematics, volume 305, Springer-Verlag, New York, 1993.
 
\bibitem{Jef67}
J.~Jeffers.
\newblock Two Case Studies in the Application of Principal Component.
\newblock {\em Applied Statistics}, 16:225--236, 1967.

\bibitem{Jol95}
I.~Jolliffe.
\newblock Rotation of principal components: choice of normalization constraints.
\newblock {\em Journal of Applied Statistics}, 22:29--35, 1995. 

\bibitem{JoNeRiSe08}
M.~Journ\'ee, Yu.~Nesterov, P.~Richt\'arik, and R.~Sepulchre.
\newblock Generalized power method for sparse principal component analysis.
\newblock CORE Discussion Paper 2008/70.
\newblock Submitted to {\em Journal of Machine Learning Research}.

\bibitem{JoTrUd03}
I.~T.~Jolliffe, N.~T.~Trendafilov, and M.~L.~Uddin.
\newblock A modified principal component technique based on the Lasso.
\newblock {\em Journal of Computational and Graphical Statistics}, 12:531--547.

\bibitem{MoWeAv06}
B.~Moghaddam, Y.~Weiss, and S.~Avidan.
\newblock Spectral bounds for sparse PCA: Exact and greedy algorithms.
\newblock In Y. Weiss, B. Sch$\ddot{\rm o}$lkopf, and J. Platt, editors, 
\newblock {\em Advances in Neural Information Processing Systems 18}, pages 915--922. 
MIT Press, Cambridge, MA, 2006.

\bibitem{Mont09}
R.~D.~C.~Monteiro. 
\newblock {\em private communication}, 2009.

\bibitem{OveWom93}
M.~L.~Overton and R.~S.~Womersley.
\newblock Optimality conditions and duality theory for minimizing sums of the 
largest eigenvalues of symmetric matrices.
\newblock {\em Mathematical Programming}, 62:321--357, 1993.

\bibitem{Rob76}
S.~M.~Robinson.
\newblock Stability theory for systems of inequalities, Part 2: Differentiable nonlinear 
systems.
\newblock {\em SIAM Journal on Numerical Analysis}, 13:497--513, 1976.

\bibitem{Rob83}
S.~M.~Robinson.
\newblock Local structure of feasible sets in nonlinear programming, Part I: Regularity.
\newblock In V.~Pereira and A.~Reinoza, editors, {\em Numerical Methods Lecture Notes in 
Mathematics}, vol. 1005, Springer-Verlag, Berlin, 1983.

\bibitem{Rock70}
R. T.~Rockafellar.
\newblock {\em Convex Analysis}.
\newblock Princeton University Press, 1970.

\bibitem{Rus06}
A.~Ruszczy\'nski.
\newblock {\em Nonlinear Optimization}.
\newblock Princeton University Press, 2006.

\bibitem{ShHu07}   
H.~Shen and J. Z.~Huang.
\newblock Sparse principal component analysis via regularized low rank matrix approximation.
\newblock {\em Journal of Multivariate Analysis}, 99(6):1015--1034, 2008.

\bibitem{TseYun09}
P.~Tseng and S.~Yun.
\newblock A coordinate gradient descent method for nonsmooth separable minimization.
\newblock {\em Mathematical Programming}, 117:387--423, 2009. 

\bibitem{WrNoFi09}
S~J.~Wright, R.~Nowak, and M.~A.~T.~Figueiredo.
\newblock Sparse reconstruction by separable approximation.
\newblock To appear in {\em IEEE Transactions on Signal Processing}.

\bibitem{ZoHaTi06}   
H.~Zou, T.~Hastie, and R.~Tibshirani.
\newblock Sparse principal component analysis.   
\newblock {\em Journal of Computational and Graphical Statistics}, 15(2):265--286, 2006.
\end{thebibliography}
\end{document}